\documentclass[12pt]{article}
\usepackage{amsmath, amssymb, amsthm, enumerate, booktabs, accents, framed, graphicx, array, color, multirow, mathrsfs, amsrefs, mathrsfs, cases, euscript, bm, multicol, caption, subfigure}

\usepackage[T1]{fontenc}
\usepackage[utf8]{inputenc}
\usepackage[colorlinks,linkcolor=blue,citecolor=blue,urlcolor=blue]{hyperref}
\usepackage[usenames,dvipsnames,svgnames,table]{xcolor}
\usepackage{a4wide}
\usepackage{tikz}

\allowdisplaybreaks[4]

\numberwithin{equation}{section} \theoremstyle{plain}
\newtheorem{maintheorem}{Theorem}
\newtheorem{theorem}{Theorem}[section]
\newtheorem{lemma}[theorem]{Lemma}
\newtheorem{corollary}[theorem]{Corollary}
\newtheorem{proposition}[theorem]{Proposition}

\newtheorem{assumption}{Assumption}

\newtheorem{definition}[theorem]{Definition}
\newtheorem{remark}[theorem]{Remark}

\def\bC{\mathbb C}
\def\bE{\mathbb E}
\def\bN{\mathbb N}

\def\cC{\mathcal C}
\def\cI{\mathcal I}
\def\cJ{\mathcal J}
\def\cP{\mathcal P}

\def\y{\mathbf y}

\def\Tr{\mathrm {Tr}}

\def\Var{\mathrm {Var}}
\def\deg{\mathrm {deg}}

\begin{document}

\title{Limiting moments of sample covariance matrices from large fold tensor vectors}

\date{\today}
\author{Wangjun Yuan\thanks{Department of Mathematics, Southern University of Science and Technology. E-mail: ywangjun@connect.hku.hk}}
\maketitle

\begin{abstract}
We study the moments of sample covariance matrices $M_{n,k,m}$ formed from independent $k$-fold tensor products of $n$-dimensional random vectors.
The base vectors have independent and identically distributed centered complex entries with unit variance and finite moments of all orders.
For every fixed $p \in \mathbb N_+$, under suitable moment conditions, we obtain an asymptotic formula for $p$th moment as $n, k, m$ tend to infinity with $m/n^k \to c \in (0,\infty)$.
There is no restriction on the growth rate of $k$ relative to $n$.
We also prove a variance bound and obtain the corresponding almost sure asymptotic formula.
The results recover known limiting moments in several regimes.
When $k/n$ tends to infinity and the modulus of base vector entry is not constant, we establish the asymptotic growth rate for the $p$th moment of normalized matrices.
In this regime, we also provide a lower bound for the growth rate of the largest eigenvalue.
\end{abstract}

\noindent{\bf AMS 2000 subject classifications:} Primary 60B20; Secondary 15B52, 15A18.

\medskip 

\noindent{\bf Keywords and phrases:} Large $k$-fold tensors; Moments; Eigenvalue spectral distribution; Mar\v{c}enko-Pastur law; Quantum information theory. 
 
\section{Introduction}
\label{sec:intro}

We write $\bN_+$ for the set of positive integers, and $\bN$ for the set of natural numbers. Let $n \in \bN_+$ and let $\{\xi_1, \ldots, \xi_n\}$ be a family of i.i.d. centered complex random variables with unit variance.
We consider the random vector $\y = (\xi_1, \ldots, \xi_n) \in \bC^n$, and let $\{\y_{\alpha}^{(l)}: 1 \le \alpha \le m, 1 \le l \le k\}$ be a family of i.i.d. copies of $\y$.
We use the notation $\otimes$ for the tensor outer product.
For $1 \le \alpha \le m$, we define $Y_{\alpha} = \y_{\alpha}^{(1)} \otimes \cdots \otimes \y_{\alpha}^{(k)}$ yielding a $k$-fold tensor product with entries
\begin{align*}
    \left( Y_\alpha \right)_{(j_1,\ldots,j_k)} = \prod_{l=1}^k \left( \y_\alpha^{(l)} \right)_{j_l}, \quad
    \forall 1 \le j_1, \ldots, j_k \le n.
\end{align*}
We refer readers to \cite{Tensorbook2018} for more details on tensor products.
We identify each $Y_{\alpha}$ as an $n^k$-dimensional vector, and write $Y = (Y_1, \ldots, Y_m)$. Let $\{\tau_1, \tau_2, \ldots\}$ be a sequence of nonnegative numbers. We consider the sum of $m$ independent rank-$1$ Hermitian matrices:
\begin{align}\label{eq:matrix}
	M_{n,k,m} = \sum_{\alpha=1}^m \tau_{\alpha} Y_{\alpha} Y_{\alpha}^*,
\end{align}
which is an $n^k \times n^k$ Hermitian matrix.

To compare with the existing literature, we also introduce the normalized vectors
\begin{align*}
    \widetilde{\y}
    &= \frac{\y}{\sqrt n} = \frac1{\sqrt n} (\xi_1, \ldots, \xi_n),
    \qquad
    \widetilde{\y}_\alpha^{(l)} = \frac{\y_\alpha^{(l)}}{\sqrt n}, \\
    \widetilde Y_\alpha
    &= \widetilde{\y}_\alpha^{(1)} \otimes \cdots \otimes \widetilde{\y}_\alpha^{(k)}
    = n^{-k/2} Y_\alpha,
\end{align*}
and the normalized matrix
\begin{align*}
    \widetilde M_{n,k,m}
    &= \sum_{\alpha=1}^m \tau_\alpha \widetilde Y_\alpha \widetilde Y_\alpha^*
    = n^{-k} M_{n,k,m}.
\end{align*}

When $k=1$, the matrix $M_{n,k,m}$ in \eqref{eq:matrix} is a sample covariance matrix with deterministic weights, which is an important tool for high dimensional data analysis in statistics.
The limiting spectral distribution (LSD) of such matrices is a classical topic in random matrix theory.
In particular, when $\tau_\alpha=1$ for all $\alpha$ and $m/n\to c\in(0,\infty)$, the empirical spectral distribution (ESD) of $\widetilde M_{n,1,m}$ converges to the Mar\v{c}enko-Pastur law under suitable moment conditions; see \cite{MP67}.
In the real i.i.d. case with $k=1$ and unit weights, Yin, Bai and Krishnaiah \cite{YinBaiKrishnaiah1988} proved that $\lambda_{\max}(\widetilde M_{n,1,m}) \to (1+\sqrt c)^2$ almost surely when $m/n \to c \in (0,\infty)$, provided the entries have a finite fourth moment.
We refer readers to \cites{Anderson2010,Bai2010} for further details.
For $k\ge2$, the tensor structure introduces dependence among the entries of each $Y_\alpha$, even though the base vectors have independent entries.
Thus, the usual assumptions for sample covariance matrices with independent entries do not apply directly.

Another motivation comes from quantum information theory, where tensor products describe product states.
Ambainis, Harrow and Hastings \cite{Hastings2012} studied sums of random product states and obtained the Mar\v{c}enko-Pastur moments for fixed $k$, with Gaussian base vectors or base vectors uniformly distributed on the unit sphere.
Their normalization corresponds to that of $\widetilde M_{n,k,m}$ above, with unit weights.
For fixed $k$ and unit weights, Ambainis, Harrow and Hastings \cite{Hastings2012}*{Corollary 3} also showed that the expected largest eigenvalue converges to $(1+\sqrt c)^2$ in their Gaussian and spherical models.
We would like to mention that $n$ is the dimension of each subsystem and $k$ is the number of subsystems.
Large values of $k$ are therefore natural for systems with many subsystems.
As discussed in \cite{BYY2022}, one would ideally keep $n$ fixed and let $k$ tend to infinity.
The regime $k \gg n$ is also motivated in \cite{Yuan2024}.
In the present paper, $n$ also tends to infinity, but we allow $k$ to grow arbitrarily fast relative to $n$.
Related random matrix models in quantum information theory were studied in \cites{Collins2011,Collins2013}.

Lytova \cite{Lytova2018} established the LSD under the same normalization for a class of real isotropic base vectors.
For independent entries with a finite fourth moment, her results also allows $k$ to grow with $n$ with the restriction $k = o(n)$; see \cite{Lytova2018}*{Theorem 1.2 and Remark 4.1}.
When $\tau_\alpha = 1$ for all $\alpha$, the limit is the Mar\v{c}enko-Pastur law.
Collins, Yao and Yuan \cite{BYY2022} considered the regime $k/n \to d \in [0,\infty)$ with $m/n^k \to c \in (0,\infty)$.
Assuming finiteness of all moments of the entries and convergence of the empirical moments of the weights, they computed the almost sure limit of the $p$th moment for $\widetilde M_{n,k,m}$ for all fixed $p \in \bN_+$.
Moreover, the limiting moments depend on $d$ and the fourth moment of the entries via $d(\bE [|\xi_1|^4] - 1)$; see \eqref{eq-example-linear-k-limit} below.
In the special case that $|\xi_1| = 1$ or $d=0$, for unit weights, the limiting moment sequence is exactly the Mar\v{c}enko-Pastur moment sequence.
When $d > 0$ and the modulus is not constant, the limiting moment sequence is different.
Yuan \cite{Yuan2024} focused on the setting of centered entries satisfying $|\xi_1| = 1$ almost surely and removed the restriction on the growth rate of $k$.
For $n, k \to \infty$ and $m/n^k \to c \in (0,\infty)$, for any fixed $p \in \bN_+$, \cite{Yuan2024}*{Theorems 1.1 and 1.2} give convergence of the $p$th moment of $\widetilde M_{n,k,m}$, in expectation and almost surely, under moment convergence of the weights.
With an additional growth bound on the limiting moments of the weights, the moment sequence determines a unique probability measure, which implies that the ESD of $\widetilde M_{n,k,m}$ converges to this measure almost surely; see \cite{Yuan2024}*{Corollary 1.1}.
In particular, the limit is the Mar\v{c}enko-Pastur law when all weights are one.
In a setting similar to that of \cite{Yuan2024}, Yaskov \cite{Yaskov2025} gave another proof of almost sure weak convergence of the ESD of $\widetilde M_{n,k,m}$ for arbitrary $k = k(n)$.

Let's mention some literature on spectrum of the tensor model.
Fluctuations of related tensor matrix models have also been studied for $k = 2$ in \cites{DembczakLytova2023,Dembczak2026}.
For the corresponding sample correlation matrices, Yuan \cite{Yuan2026correlation} proved convergence in probability of the ESD when $n,k \to \infty$, $k=o(n)$, $m/n^k \to c \in (0,\infty)$.

We impose the following assumption on the distribution of $\xi_1$.
\begin{assumption} \label{Assumption-distribution}
The distribution of $\xi_1$ does not depend on $n$, and $m_p = \bE [|\xi_1|^p] < \infty$ for every $p \in \bN_+$. We also set $m_0=1$.
\end{assumption}
We also present the following two assumptions on the mixed moments of $\xi_1$ and $\overline{\xi_1}$.
\begin{assumption} \label{Assumption-mixed-moments}
For all $p,q \in \bN_+$ with $|p-q|=1$, we have $\bE[\xi_1^p\overline{\xi_1}^q] = 0$.
\end{assumption}

\begingroup
\addtocounter{assumption}{-1}
\renewcommand{\theassumption}{\arabic{assumption}$'$}
\def\theHassumption{\arabic{assumption}prime}
\begin{assumption} \label{Assumption-distribution-prime}
For all nonnegative integers $p,q$ with $p\ne q$, we have $\bE[\xi_1^p\overline{\xi_1}^q] = 0$.
\end{assumption}
\endgroup

\begin{remark}
It is easy to see that Assumption \ref{Assumption-distribution-prime} is stronger than Assumption \ref{Assumption-mixed-moments}.
\end{remark}

For the dimension parameters $m,n,k$, we consider the following limiting setting.
\begin{assumption} \label{Assumption-ratio}
    $n \to \infty$, $k=k(n) \to \infty$, and $m=m(n) \to \infty$ such that
    \begin{align*}
        \dfrac{m}{n^k} \to c \in (0,\infty).
    \end{align*}
\end{assumption}
We would like to remark that there is no restriction on the growth rate of $k$ relative to $n$.

For nonnegative integers $n$ and $r$, we write
\begin{align*}
    (n)_r = n(n-1) \cdots(n-r+1)
\end{align*}
with the convention $(n)_0 = 1$.
For $d \in \bN$, let $\cP(d)$ be the set of all set partitions of $[d]=\{1,\ldots,d\}$, with the conventions $[0]=\varnothing$ and $\cP(0)=\{\varnothing\}$. Define
\begin{align} \label{eq-H-explicit}
    H_n(d)=\sum_{\pi\in\cP(d)}(n)_{|\pi|}
    \prod_{B\in\pi}m_{2|B|}, \qquad n \ge 1, d \ge 0,
\end{align}
where $|\pi|$ is the number of blocks of $\pi$, $|B|$ is the size of the block $B$, and an empty product is interpreted as one.
Moreover, for every fixed $d\ge1$, it is easy to see that $H_n(d)$ is a polynomial in $n$ of degree $d$ with leading coefficient one.
We refer to Lemma \ref{lem-norm-moment-polynomial} for an equivalent definition of $H_n(d)$ under Assumption \ref{Assumption-distribution}.

The following theorem, which is the first main result, gives an asymptotic formula for every fixed-order moment of $M_{n,k,m}$ in expectation.

\begin{maintheorem} \label{Thm-main}
Let Assumptions \ref{Assumption-distribution}, \ref{Assumption-mixed-moments}, and \ref{Assumption-ratio} hold.
Suppose that
\begin{align} \label{eq-tau-moment-convergence}
    \frac1m \sum_{j=1}^m \tau_j^q \longrightarrow m_q^{(\tau)} \in (0,+\infty),
    \qquad m \to \infty, \quad q\in \bN_+.
\end{align}
Then, for every fixed $p\in\bN_+$,
\begin{align} \label{eq-main-asymptotic}
    \frac1{n^k}\bE\left[\Tr M_{n,k,m}^p\right]
    =&\Bigg[ \sum_{s=1}^p c^s \sum_{\alpha \in \cC_{s,p}^{(1)}} \left( \prod_{a=1}^s m_{\deg_a(\alpha)}^{(\tau)} \right) \left( \prod_{a=1}^s H_n(\deg_a(\alpha)) \right)^k \Bigg](1+o(1)).
\end{align}
\end{maintheorem}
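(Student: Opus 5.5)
We expand the trace directly:
\begin{align*}
\Tr M_{n,k,m}^p=\sum_{\alpha_1,\ldots,\alpha_p=1}^m \Big(\prod_{t=1}^p\tau_{\alpha_t}\Big)\prod_{t=1}^p Y_{\alpha_t}^*Y_{\alpha_{t+1}},\qquad \alpha_{p+1}=\alpha_1 .
\end{align*}
The tensor structure factorizes each inner product as $Y_\alpha^*Y_\beta=\prod_{l=1}^k (\y_\alpha^{(l)})^*\y_\beta^{(l)}$. The $k$ layers are independent and identically distributed, so for a fixed word $\alpha=(\alpha_1,\ldots,\alpha_p)$,
\begin{align*}
\bE\prod_t Y_{\alpha_t}^*Y_{\alpha_{t+1}}=\Big(\bE\prod_{t=1}^p \y_{\alpha_t}^*\y_{\alpha_{t+1}}\Big)^k=:\big(F_n(\alpha)\big)^k .
\end{align*}
Group the words by the set partition of $[p]$ induced by equal indices, i.e.\ by the closed walk on $s$ distinct vertices. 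A class with $s$ distinct labels has $(m)_s\sim c^s n^{ks}$ realizations, and its weighted sum is $\sim m^s\prod_a m^{(\tau)}_{\deg_a}$ by \eqref{eq-tau-moment-convergence}. Here $\deg_a(\alpha)$ is the number of times label $a$ occurs, and distinctness only costs $1+o(1)$. The task therefore reduces to computing the single-layer quantity $F_n(\alpha)$ for each class and comparing classes.

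The key step is to identify $\cC^{(1)}_{s,p}$ with the classes whose closed walk is ``tree-like'' (noncrossing, each step backtracking in the Marchenko–Pastur sense), and to show $F_n(\alpha)=\prod_a H_n(\deg_a(\alpha))$ for these classes. For such a word, consecutive equal labels give factors $\|\y_a\|^2$. Peeling off a block $\y_a^*\y_b\,\y_b^*\y_a$ and integrating out $\y_b$ (whose entries are independent, with mixed moments controlled by Assumption \ref{Assumption-mixed-moments}) should reduce $\bE|\langle \y_a,\y_b\rangle|^{2}\cdots$ to powers of $\|\y_a\|^2$. Iterating, we get $\prod_a\bE\|\y\|^{2\deg_a}$, and Lemma \ref{lem-norm-moment-polynomial} identifies $\bE\|\y\|^{2d}=H_n(d)$. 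I expect the precise peeling to require care. In particular, $\bE[(\y_b^*A\y_b)^j]$ involves fourth moments, and Assumption \ref{Assumption-mixed-moments} only kills the odd-imbalance terms. I would first verify that these contributions are of lower order after raising to the $k$th power, or else build them into the class definition.

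The main obstacle is the error control, which must be uniform with no restriction on $k$ versus $n$. For classes outside $\cC^{(1)}_{s,p}$ with the same $s$, and for all classes after summing over $s$, we need
\begin{align*}
n^{ks}|F_n(\alpha)|^k=o\Big(n^{ks'}\Big(\prod_a H_n(\deg_a(\alpha'))\Big)^k\Big)
\end{align*}
for some leading class $\alpha'$. Since $H_n(d)=n^d(1+O(1/n))$, the leading powers have total degree $\sum_a\deg_a=p$. For a non-leading word I would show by a counting/graph argument on the single-layer expansion that $|F_n(\alpha)|\le C n^{p-1}$, or more generally that the exponent of $n$ is $p-(\text{excess})$. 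This gives a factor $n^{-k}\cdot C^k$, which still vanishes when $k\to\infty$ and $n\to\infty$, since $(C/n)^k\to 0$. Note that a ratio bound of the form $(1+O(1/n))^k$ is not available, because $k/n$ may be unbounded. This is why the statement keeps the exact $H_n$ powers rather than $n^{\deg}$. Comparisons must therefore always be of the form ``strictly smaller power of $n$ per layer'', with constants depending only on $p$ and the $m_q$. Finally, $p$ is fixed, so only finitely many classes occur, and summing over them yields \eqref{eq-main-asymptotic}.
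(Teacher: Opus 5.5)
Your architecture matches the paper's: factor the $k$ layers into $F_n(\alpha)^k$ (the paper's $S_n(\alpha)^k$), evaluate non-crossing words exactly via $H_n$, show that crossing words lose a factor $n$ per layer, and use positivity of the leading terms to absorb the errors in $m/n^k$ and the weights. However, both quantitative claims it rests on are wrong as stated.

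\textbf{The leading formula is off by $n^{1-s}$.} The identity $F_n(\alpha)=\prod_a H_n(\deg_a(\alpha))$ fails for every $s\ge2$. For $\alpha=(1,2)$ one has $F_n(\alpha)=\bE|\y_1^*\y_2|^2=n$, not $H_n(1)^2=n^2$. The correct statement is $F_n(\alpha)=n^{1-s}\prod_a H_n(\deg_a(\alpha))$ (Corollary~\ref{cor-Sn-exact}). It is exactly this factor $n^{1-s}$ that combines with $n^{-k}(m)_s\approx c^s n^{k(s-1)}$ to produce $c^s\big(\prod_a H_n(\deg_a(\alpha))\big)^k$.

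\textbf{Your crossing target is therefore too weak.} You aim for $|F_n(\alpha)|\le Cn^{p-1}$. Against the true size $n^{p+1-s}$ of the leading single-layer terms at the same $s$, this leaves a per-layer ratio $Cn^{s-2}$, which does not vanish for any crossing word. For example, with $\alpha=(1,2,1,2)$ you would be comparing $Cn^3$ with $n^{-1}H_n(3)H_n(1)\approx n^3$. Your fixed-$s$ comparison only looked consistent because the two normalization errors offset each other.

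What is needed is $|F_n(\alpha)|\le C_p n^{p-s}$. This follows from counting vertices of $g(i,\alpha)$. A nonzero $E(i,\alpha)$ requires every edge to be covered at least twice, so $r+s\le p+1$. Equality forces a tree, on which the closed walk gives $e_u=e_d=1$, i.e.\ a graph in $\Delta^{(1)}_0(p;\alpha)$. Such graphs exist only for non-crossing $\alpha$, so for crossing $\alpha$ we get $r\le p-s$ (compare Corollary~\ref{cor-crossing-Sn-bound}).

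\textbf{The exact evaluation is missing, and your fallback cannot work.} The exact evaluation for non-crossing words is the heart of the proof, and you have not supplied it. Your fallback, checking that the extra fourth-moment contributions are ``of lower order after raising to the $k$th power'', is unavailable when $k/n\to\infty$, as you yourself note later.

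The missing idea is to always integrate out a label whose occurrences form one consecutive block. Such a label exists in every non-crossing word: take $b$ minimizing the distance between its first and last occurrence. Any other label occurring inside that span would, by non-crossing, be confined to it and have a smaller span.

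For this $b$, with $d=\deg_b(\alpha)$, the vector $\y_b$ enters the trace only through $(\y_b\y_b^*)^d=\|\y_b\|^{2(d-1)}\y_b\y_b^*$, and it is independent of everything else. Moreover,
\[
\bE\big[\|\y\|^{2(d-1)}\y\y^*\big]=\frac{H_n(d)}{n}\,I_n
\]
holds exactly. The diagonal entries are equal by exchangeability and sum to $H_n(d)$. Each off-diagonal entry vanishes because every term of its expansion contains a factor $\bE[|\xi_1|^{2l}\xi_1]=\bE[\xi_1^{l+1}\overline{\xi_1}^{\,l}]$. This is $0$ by Assumption~\ref{Assumption-mixed-moments} for $l\ge1$ and by centering for $l=0$.

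By cyclicity of the trace, $F_n(\alpha)=\frac{H_n(d)}{n}F_n(\alpha')$, where $\alpha'$ is the (still non-crossing) word with the block deleted. Induction from $F_n((1,\ldots,1))=H_n(p)$ then gives the correct formula. The fourth-moment terms you worried about are not errors; they are exactly what builds $H_n(d)$. Neither pseudo-moments like $\bE[\xi_1^2]$ nor general quantities $\bE[(\y_b^*A\y_b)^j]$ ever appear; the latter arise only if you integrate out a label with non-consecutive occurrences.

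This conditional-expectation step is a compact form of the paper's coordinate computation in Proposition~\ref{prop-Sn-recursion}. With these two repairs, the rest of your outline goes through as in Theorem~\ref{thm-moment-relative-error}, using $H_n(d)\ge n^d$ to compare crossing terms with any leading term.
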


Here $\cC_{s,p}^{(1)}$ is the set of canonical non-crossing sequences of length $p$ with $s$ distinct values, and $\deg_a(\alpha)$ is the number of occurrences of $a$ in $\alpha$. These notions are introduced in Section \ref{sec:graph}.

\begin{remark} \label{rmk-H-degree-recursion}
For each fixed positive integer $n$, the quantities $H_n(d)$ can also be computed recursively in $d$ via
\begin{align} \label{eq-H-degree-recursion}
    H_n(d)
    = \sum_{\ell=1}^d \left[ n \binom{d-1}{\ell-1} - \binom{d-1}{\ell} \right] m_{2\ell} H_n(d-\ell), \qquad d\ge1
\end{align}
with the initial condition $H_n(0) = 1$ for $n \ge 1$.
Here $\binom ab=0$ whenever $b<0$ or $b>a$, for nonnegative integers $a$. In particular, $H_n(1)=n$. The recurrence is proved in Lemma \ref{lem-norm-moment-polynomial}.
\end{remark}

We also establish the following variance estimate, which yields the corresponding almost sure asymptotic formula in Corollary \ref{cor-normalized-moment-as}.

\begin{maintheorem} \label{Thm-variance}
Let Assumptions \ref{Assumption-distribution} and \ref{Assumption-ratio} hold, and suppose that \eqref{eq-tau-moment-convergence} holds. Then, for every fixed $p \in \bN_+$, there is a finite constant $C_p \ge 1$ such that, for all sufficiently large $n$,
\begin{align} \label{eq-main-variance}
    \Var \left( \frac1{n^k} \Tr M_{n,k,m}^p \right)
    &\le C_p^k n^{(2p-1)k}.
\end{align}
\end{maintheorem}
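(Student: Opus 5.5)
We write the trace as a sum over closed walks and use independence to discard uncorrelated pairs. Writing $\Tr M^p = \sum_{\alpha_1,\ldots,\alpha_p} \tau_{\alpha_1}\cdots\tau_{\alpha_p} \Tr(Y_{\alpha_1}Y_{\alpha_1}^*\cdots Y_{\alpha_p}Y_{\alpha_p}^*)$, each summand factors as a product of inner products $\prod_{t=1}^p Y_{\alpha_t}^* Y_{\alpha_{t+1}}$ (indices cyclic). Because $Y_\alpha^*Y_\beta=\prod_{l=1}^k (\y_\alpha^{(l)})^*\y_\beta^{(l)}$ and the $k$ layers are independent, each summand factors exactly as
\[
\tau_{\alpha_1}\cdots\tau_{\alpha_p}\, T_{\alpha}^{(1)}\cdots T_{\alpha}^{(k)}, \qquad T_\alpha^{(l)}=\prod_{t}(\y^{(l)}_{\alpha_t})^*\y^{(l)}_{\alpha_{t+1}},
\]
where $T^{(1)}_\alpha,\ldots,T^{(k)}_\alpha$ are independent and identically distributed. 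Then
\[
\Var(\Tr M^p)=\sum_{\alpha,\beta}\tau_\alpha\tau_\beta\Big(\bE[T_\alpha T_\beta^{*}]^k\cdots\Big),
\]
or more precisely each term of the double sum equals
\[
\tau_\alpha\tau_\beta\Big[\big(\bE[T^{(1)}_\alpha\overline{T^{(1)}_\beta}]\big)^k-\big(\bE T^{(1)}_\alpha\big)^k\big(\bE\overline{T^{(1)}_\beta}\big)^k\Big].
\]
(We use the variance of a complex variable, so a conjugate appears; the trace is real anyway.) Any pair whose index sets $\{\alpha_t\}$ and $\{\beta_t\}$ are disjoint gives an exactly vanishing covariance, since then $T_\alpha^{(l)}$ and $T_\beta^{(l)}$ are independent for every $l$. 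So only pairs sharing at least one index $\alpha_t=\beta_{t'}$ survive.

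Next we group the surviving pairs by the pattern (set partition) of the concatenated index sequence $(\alpha,\beta)$. There are finitely many patterns, depending only on $p$. For a pattern with $s$ distinct values, the joint sequence is connected by assumption, so $s\le 2p-1$. The number of index choices is at most $m^s\le (2c)^s n^{ks}\le C^k n^{(2p-1)k}$ for large $n$. The weights contribute
\[
\sum \tau_{\alpha}\tau_{\beta}\le m^{s}\prod_a m^{(\tau)}_{\deg_a}\,(1+o(1))
\]
by \eqref{eq-tau-moment-convergence}. More carefully, the sum of products of $\tau$'s over indices with a given pattern is bounded by $\prod_a \sum_j \tau_j^{\deg_a} \le C m^s$. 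It therefore suffices to show that for each pattern
\[
\big|\bE[T^{(1)}_\alpha\overline{T^{(1)}_\beta}]\big|\le C_p' n^{2p},
\]
together with the same bound for $|\bE T_\alpha^{(1)}\bE \overline{T_\beta^{(1)}}|$. Raising to the $k$th power then gives $C^k n^{2pk}$. Combining everything and dividing by $n^{2k}$ gives $C_p^k n^{(2p-1)k}\, n^{2pk}/n^{2k}$.

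This is too big. The crude bound must be refined, because counting indices and layer moments separately loses a factor: the bound $m^s n^{2pk}$ with $s=2p-1$ is far too large. What we actually need is the joint bound
\[
m^s\,|\bE T|^k \lesssim C^k n^{(2p+1)k},
\]
that is, after dividing by $n^{2k}$, a bound of $n^{(2p-1)k}$. Expanding $T^{(1)}$ in the coordinates of the $n$-vectors, $\bE[T_\alpha\overline{T_\beta}]$ is a sum over $n^{2p}$ coordinate choices $(j_1,\ldots,j_{2p})$ of a product of mixed moments. Its size is at most $C\,n^{r}$, where $r$ is the maximal number of distinct coordinate values that is compatible with nonvanishing moments. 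With $s$ distinct vectors and $2p$ inner-product edges, the graph is the cycle structure on the vectors, and the coordinates label the edges. Each vector that appears must have all of its coordinate entries paired, so a standard graph count (as in the mean computation behind Theorem \ref{Thm-main}, via noncrossing/tree-type arguments) gives $r\le 2p+1-s$. This yields
\[
m^s n^{rk}\le C^k n^{(2p+1)k},
\]
which is what we need. We use only Assumption \ref{Assumption-distribution}, bounding every mixed moment by products of the $m_q$. This per-layer Euler-characteristic-type inequality $r+s\le 2p+1$ for connected pairs is the main obstacle. We will prove it by viewing the pair as a connected bipartite multigraph between vector-vertices and coordinate-vertices with $2\cdot 2p$ edges, each edge used at least twice, so that
\[
\#\text{vertices}\le \#\text{distinct edges}+1\le 2p+1.
\]
The subtracted product term is handled the same way, with each factor satisfying $r_i+s_i\le p+1$. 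Finally, $C_p$ collects the finitely many patterns and the moment constants, each raised to at most the power $k$.
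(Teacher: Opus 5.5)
Your proposal is correct and follows essentially the paper's route: you expand the variance over pairs $(\alpha,\beta)$, discard disjoint pairs, bound the connected joined bipartite multigraph (with $4p$ edges, each used at least twice) by $r+s\le 2p+1$, and control the weights through \eqref{eq-tau-moment-convergence}. The one small difference is the subtracted product term, which you bound factor by factor via $r_i+s_i\le p+1$; that needs the immediate but unstated fact $s_1+s_2\ge s+1$ coming from the shared index, whereas the paper reuses the joined-graph vertex bound for this term.
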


Next, we turn to the normalized matrix $\widetilde M_{n,k,m}$. Under Assumption \ref{Assumption-ratio} and \eqref{eq-tau-moment-convergence}, the first moment satisfies
\begin{align} \label{eq-example-first-moment}
    \frac1{n^k} \bE [\Tr \widetilde M_{n,k,m}]
    &= \frac1{n^k} \sum_{\alpha=1}^m \tau_\alpha
    = \frac{m}{n^k} \left( \frac1m \sum_{\alpha=1}^m \tau_\alpha \right)
    \longrightarrow c m_1^{(\tau)} > 0.
\end{align}
Thus, this normalization gives a finite positive limiting first spectral moment.
In particular, we believe that the normalization $\widetilde M_{n,k,m}$ is appropriate.
However, the higher moment of $\widetilde M_{n,k,m}$ is exploded when $m_4>1$ and $k/n\to\infty$. See Proposition \ref{prop-example-moment-growth}. In this case, we use the asymptotic moment formula to derive the lower bounds for the growth of the largest eigenvalue
\begin{align*}
    \frac nk\log\lambda_{\max}(\widetilde M_{n,k,m})
    &\longrightarrow+\infty \qquad \text{almost surely}.
\end{align*}
We would like to point out that the growth rate is not optimal. The optimal growth rate will be studied as another project.

Our proof uses the method of moments and the graph representation developed in \cites{Bai2010,BYY2022,Yuan2024}.
In the moment expansion, the tensor structure raises each single-factor contribution to the power $k$.
When $k=o(n)$, its leading term is enough to determine the limit.
When $k$ is of order $n$, the second leading term can also contribute.
For unrestricted $k$, all the terms have contribution before taking the $k$th power. We count all the terms explicitly.
This is the main novelty of the paper.

The rest of the paper is organized as follows. We develop the theory of graph combinatorics in Section \ref{sec:graph}.
We compute the moments exactly in Section \ref{sec:moment}.
In Section \ref{sec:asymptotic-moment}, we derive the asymptotic moment formula and prove Theorem \ref{Thm-main}.
In Section \ref{sec:variance}, we prove Theorem \ref{Thm-variance}.
Section \ref{sec:examples} recovers the known moment formulas in the unit-modulus, $k=o(n)$, and $k/n\to d\in(0,\infty)$ regimes. In Section \ref{sec:example-quadratic-k}, we establish the moment growth formulas when $k/n\to\infty$ and derive lower bounds for the largest eigenvalue from these estimates.

Throughout the paper, we assume that Assumption \ref{Assumption-distribution} holds. For the spectral moment calculation, we first provide the proofs under Assumption \ref{Assumption-mixed-moments}; whenever Assumption \ref{Assumption-distribution-prime} allows for a shorter argument, we indicate it separately.

\section{Graph combinatorics} \label{sec:graph}

In this section, we introduce the graph combinatorics that will be used later.
For the graph and moment calculations in this section and Section \ref{sec:moment}, we fix $p$ and take $n,m \ge p$, which holds for all sufficiently large $n$ and $m$.

\subsection{Graphs for moments} \label{sec:graph-def}

In this subsection, we introduce the graph that corresponds to the moment following the framework of \cites{Bai2010, BYY2022,Yuan2024}.

We call $\alpha=(\alpha_1, \ldots, \alpha_p) \in [m]^p$ a sequence of length $p$.
We denote by $|\alpha|$ the number of distinct elements in $\alpha$. If $s = |\alpha|$, then we call $\alpha$ an \emph{$s$-sequence}. Let  $\cJ_{s,p}(m)$ be the set of all $s$-sequences $\alpha \in [m]^p$. Then
\begin{align} \label{eq-0.3}
	[m]^p = \bigcup_{s=1}^p \cJ_{s,p}(m).
\end{align}
For a sequence $\alpha = (\alpha_1, \ldots, \alpha_p)$ and each value $t$ in $\alpha$, we count its frequency by
\begin{align}\label{eq:deg_t}
  \deg_t(\alpha) = \# \{j \in [p]: \alpha_j = t \},
\end{align}
where we use the notation $\#S$ for the number of elements in the set $S$.

Now we introduce the graph for the moment. For $\alpha = (\alpha_1, \ldots, \alpha_p) \in [m]^p$ and $i = (i^{(1)}, \ldots, i^{(p)}) \in [n]^p$, we draw two parallel lines referred to as the \emph{$\alpha$-line} and the \emph{$i$-line}, respectively.
Plot $i^{(1)}, \ldots, i^{(p)}$ on the $i$-line and $\alpha_1, \ldots, \alpha_p$ on the $\alpha$-line.
We call $i^{(1)}, \ldots, i^{(p)}$ and $\alpha_1, \ldots, \alpha_p$ the $i$-vertices and the $\alpha$-vertices.
Draw an edge from $\alpha_u$ to $i^{(u)}$ and call it a \emph{down edge} for $1 \le u \le p$. We also draw an edge from $i^{(u)}$ to $\alpha_{u+1}$ for $1 \le u \le p$, { where $\alpha_{p+1}:=\alpha_1$}, and the edges are called \emph{up edges}.
We identify vertices on the same line if and only if their values agree. Thus, the graph has one $\alpha$-vertex for each distinct value in $\alpha$, and one $i$-vertex for each distinct value in $i$. Vertices on different lines remain distinct even when they have the same value.
We denote the directed graph by $g(i,\alpha)$.
When indicating a vertex, we use its value. We may abuse the notation by using the component $\alpha_u$ or $i^{(u)}$ of the sequence $\alpha$ or $i$ if we want to emphasize the order of the visit of the particular vertex.

For a given sequence $\alpha$, we set $\Delta(p;\alpha)$ to be the collection of all directed graphs $g(i,\alpha)$, and we call them \emph{$\Delta(p;\alpha)$-graphs}. We provide an example of a $\Delta(p;\alpha)$-graph in Figure \ref{fig-delta-graph} below.
\begin{figure}[htbp]
\centering
\begin{tikzpicture}[line width=0.65pt,>=latex]
  \draw[dotted] (-0.45,2) -- (3.5,2);
  \draw[dotted] (-0.45,0) -- (3.5,0);
  \node[anchor=east] at (-0.55,2) {$\alpha$-line};
  \node[anchor=east] at (-0.55,0) {$i$-line};
  \node[circle,fill=black,inner sep=2pt] (a1) at (0,2) {};
  \node[circle,fill=black,inner sep=2pt] (a2) at (3,2) {};
  \node[circle,fill=black,inner sep=2pt] (b1) at (0,0) {};
  \node[circle,fill=black,inner sep=2pt] (b2) at (3,0) {};
  \node[above=4pt] at (a1) {$\alpha_1=1$};
  \node[above=4pt] at (a2) {$\alpha_2=\alpha_3=2$};
  \node[below=4pt] at (b1) {$i^{(1)}=i^{(3)}=1$};
  \node[below=4pt] at (b2) {$i^{(2)}=2$};
  \draw[->] (a1) to[bend left=13] (b1);
  \draw[->] (a2) to[bend left=13] (b2);
  \draw[->] (a2) to[bend left=10] (b1);
  \draw[->] (b1) to[bend left=10] (a2);
  \draw[->] (b2) to[bend left=13] (a2);
  \draw[->] (b1) to[bend left=13] (a1);
\end{tikzpicture}
\caption{A $\Delta(p;\alpha)$-graph with $p=3$, $\alpha=(1,2,2)$, and $i=(1,2,1)$.}
\label{fig-delta-graph}
\end{figure}

From the definition, one can easily see that the directed graph $g(i,\alpha)$ is connected, with $p$ up edges and $p$ down edges appearing alternately.
If there are edges that have the same endpoints, then they are \emph{coincident}. If coincident edges have the same direction, then the directed graph has multiple edges.
Let $1 \le a \le m$ and $1 \le b \le n$. Let $e_d(a,b)$ and $e_u(a,b)$ denote the number of multiple down edges from the $\alpha$-vertex with value $a$ to the $i$-vertex with value $b$, and the number of multiple up edges from the $i$-vertex with value $b$ to the $\alpha$-vertex with value $a$, respectively. We use the convention that $e_d(a,b) = e_u(a,b) = 0$ if either none of the $\alpha$-vertices takes value $a$ or none of the $i$-vertices takes value $b$.

The i.i.d. setting of $\xi_i$ implies that the moment depends on the graph rather than the values of the vertices. To simplify the sum, we introduce the equivalence of sequences and graphs.
Two sequences $\alpha=(\alpha_1, \ldots, \alpha_p), \beta=(\beta_1, \ldots, \beta_p)$ in $[m]^p$ are \emph{equivalent} if there exists a permutation $\pi$ on $[m]$ such that $\beta_i = \pi(\alpha_i)$ for all $1 \le i \le p$.
The sequence $\alpha$ is \emph{canonical} if $\alpha_1=1$ and $\alpha_u \le \max\{\alpha_1, \ldots, \alpha_{u-1}\} + 1$ for $u \ge 2$. We denote by $\cC_{s,p}$ the set of all canonical $s$-sequences of length $p$.
From the definition above, one can see that the set of distinct values of a canonical $s$-sequence is $[s]$. Denote by $\cI_{s,m}$ the set of injective maps from $[s]$ to $[m]$. For a canonical $s$-sequence $\alpha$ and a map $\varphi \in \cI_{s,m}$, we call $\varphi(\alpha)$ the $s$-sequence $(\varphi(\alpha_1), \ldots, \varphi(\alpha_p))$. For each canonical $s$-sequence, its image under the maps in $\cI_{s,m}$ gives all its equivalent sequences, and hence its equivalence class of sequences in $[m]^p$ has exactly $m(m-1) \cdots (m-s+1)$ distinct elements.

Two graphs $g(i,\alpha)$ and $g(i',\alpha')$ are called \emph{equivalent}, if the two sequences $i$ and $i'$ are equivalent, and the two sequences $\alpha$ and $\alpha'$ are equivalent. In this case, we write $g(i,\alpha) \sim g(i',\alpha')$ for this equivalence.
For each equivalence class, we may choose the canonical graph such that $\alpha = (\alpha_1, \ldots, \alpha_p) \in [m]^p$ is a canonical $s$-sequence and $i = (i^{(1)}, \ldots, i^{(p)}) \in [n]^p$ is a canonical $r$-sequence, for some $r,s \in \bN_+$.

We end this subsection by introducing two operations on the $\Delta(p;\alpha)$-graph $g(i,\alpha)$ for a canonical $r$-sequence $i$ of length $p$ and a canonical $s$-sequence $\alpha$ of length $p$. The operations will be used later.

The first operation is the \emph{splitting of an $i$-vertex}.
Let $v$ be an $i$-vertex of $g(i,\alpha)$, and denote by
$I=\{u\in[p]: i^{(u)}=v\}$ the set of positions in the sequence $i$ whose value equals $v$.
We define the \emph{splitting of the $i$-vertex $v$} as follows.
Let $J$ be a non-empty proper subset of $I$.
We replace $v$ by two distinct $i$-vertices $v_1$ and $v_2$, whose values are different from those of all the other $i$-vertices.
For every $u\in J$, the directed edges $\alpha_u \longrightarrow i^{(u)}$ and $i^{(u)} \longrightarrow \alpha_{u+1}$ are replaced by $\alpha_u \longrightarrow v_1$ and $v_1 \longrightarrow \alpha_{u+1}$, respectively.
For every $u\in I \setminus J$, they are replaced by $\alpha_u \longrightarrow v_2$ and $v_2 \longrightarrow \alpha_{u+1}$, respectively.
All the other vertices and edges remain unchanged.
The resulting graph has $r+1$ distinct $i$-vertices.
Whenever necessary, we canonically relabel the $i$-vertices of the resulting graph.
An example is shown in Figure \ref{fig-splitting-i-vertex}.
\begin{figure}[htbp]
\centering
\begin{tikzpicture}[line width=0.65pt,>=latex,font=\small]
  \node at (1.4,2.85) {$\alpha=(1,2,2),\quad i=(1,1,1)$};
  \node at (8.9,2.85) {$\alpha=(1,2,2),\quad i=(1,2,1)$};
  \draw[dotted] (-0.3,2) -- (3.1,2);
  \draw[dotted] (-0.3,0) -- (3.1,0);
  \node[circle,fill=black,inner sep=2pt] (a1) at (0,2) {};
  \node[circle,fill=black,inner sep=2pt] (a2) at (2.8,2) {};
  \node[circle,fill=black,inner sep=2pt] (b1) at (1.4,0) {};
  \node[above=4pt] at (a1) {$\alpha_1=1$};
  \node[above=4pt] at (a2) {$\alpha_2=\alpha_3=2$};
  \node[below=4pt] at (b1) {$i^{(1)}=i^{(2)}=i^{(3)}=1$};
  \draw[->] (a1) to[bend left=13] (b1);
  \draw[->] (a2) to[bend left=11] (b1);
  \draw[->] (a2) to[bend left=32] (b1);
  \draw[->] (b1) to[bend left=11] (a2);
  \draw[->] (b1) to[bend left=32] (a2);
  \draw[->] (b1) to[bend left=13] (a1);
  \draw[->,line width=0.9pt] (4.05,1) -- (5.95,1);
  \node[align=center,above] at (5,1.13) {split the $i$-vertex\\$I=\{1,2,3\},\ J=\{2\}$};
  \node[align=center,below] at (5,0.88) {canonical relabeling};
  \begin{scope}[xshift=7.5cm]
    \draw[dotted] (-0.3,2) -- (3.1,2);
    \draw[dotted] (-0.3,0) -- (3.1,0);
    \node[circle,fill=black,inner sep=2pt] (c1) at (0,2) {};
    \node[circle,fill=black,inner sep=2pt] (c2) at (2.8,2) {};
    \node[circle,fill=black,inner sep=2pt] (d1) at (0,0) {};
    \node[circle,fill=black,inner sep=2pt] (d2) at (2.8,0) {};
    \node[above=4pt] at (c1) {$\alpha_1=1$};
    \node[above=4pt] at (c2) {$\alpha_2=\alpha_3=2$};
    \node[below=4pt] at (d1) {$i^{(1)}=i^{(3)}=1$};
    \node[below=4pt] at (d2) {$i^{(2)}=2$};
    \draw[->] (c1) to[bend left=13] (d1);
    \draw[->] (c2) to[bend left=13] (d2);
    \draw[->] (c2) to[bend left=10] (d1);
    \draw[->] (d1) to[bend left=10] (c2);
    \draw[->] (d2) to[bend left=13] (c2);
    \draw[->] (d1) to[bend left=13] (c1);
  \end{scope}
\end{tikzpicture}
\caption{Splitting the $i$-vertex of value $1$ according to $J=\{2\}$.}
\label{fig-splitting-i-vertex}
\end{figure}

The second one is the
\emph{gluing of $i$-vertices}.
For $2\le h\le r$, let $v_1,\ldots,v_h$ be $h$ distinct $i$-vertices of $g(i,\alpha)$.
We define the \emph{gluing of the $i$-vertices $v_1,\ldots,v_h$} as follows.
We replace $v_1,\ldots,v_h$ by a single $i$-vertex $v$.
For every $u \in [p]$ such that the $i$-vertex $i^{(u)}$ is one of $v_1, \ldots, v_h$, the edges $\alpha_u \longrightarrow i^{(u)}$ and $i^{(u)} \longrightarrow \alpha_{u+1}$ are replaced by $\alpha_u \longrightarrow v$ and $v \longrightarrow \alpha_{u+1}$, respectively.
All the other vertices and edges remain unchanged.
In particular, distinct edges before the gluing remain distinct after the gluing, although some of them may become coincident.
The resulting graph has $r-h+1$ distinct $i$-vertices.
Whenever necessary, we canonically relabel the $i$-vertices of the resulting graph.
We provide an example of gluing of two $i$-vertices in Figure \ref{fig-gluing-i-vertices} below.

\begin{figure}[htbp]
\centering
\begin{tikzpicture}[line width=0.65pt,>=latex,font=\small]
  \node at (1.4,2.85) {$\alpha=(1,2,2),\quad i=(1,2,3)$};
  \node at (8.9,2.85) {$\alpha=(1,2,2),\quad i=(1,2,1)$};
  \draw[dotted] (-0.3,2) -- (3.1,2);
  \draw[dotted] (-0.3,0) -- (3.1,0);
  \node[circle,fill=black,inner sep=2pt] (a1) at (0,2) {};
  \node[circle,fill=black,inner sep=2pt] (a2) at (2.8,2) {};
  \node[circle,fill=black,inner sep=2pt] (b1) at (0,0) {};
  \node[circle,fill=black,inner sep=2pt] (b2) at (1.4,0) {};
  \node[circle,fill=black,inner sep=2pt] (b3) at (2.8,0) {};
  \node[above=4pt] at (a1) {$\alpha_1=1$};
  \node[above=4pt] at (a2) {$\alpha_2=\alpha_3=2$};
  \node[below=4pt] at (b1) {$i^{(1)}=1$};
  \node[below=4pt] at (b2) {$i^{(2)}=2$};
  \node[below=4pt] at (b3) {$i^{(3)}=3$};
  \draw[->] (a1) to[bend right=10] (b1);
  \draw[->] (a2) to[bend left=13] (b2);
  \draw[->] (a2) to[bend left=10] (b3);
  \draw[->] (b1) -- (a2);
  \draw[->] (b2) to[bend left=13] (a2);
  \draw[->] (b3) -- (a1);
  \draw[->,line width=0.9pt] (4.05,1) -- (5.95,1);
  \node[align=center,above] at (5,1.13) {glue the $i$-vertices\\of values $1$ and $3$};
  \node[align=center,below] at (5,0.88) {canonical relabeling};
  \begin{scope}[xshift=7.5cm]
    \draw[dotted] (-0.3,2) -- (3.1,2);
    \draw[dotted] (-0.3,0) -- (3.1,0);
    \node[circle,fill=black,inner sep=2pt] (c1) at (0,2) {};
    \node[circle,fill=black,inner sep=2pt] (c2) at (2.8,2) {};
    \node[circle,fill=black,inner sep=2pt] (d1) at (0,0) {};
    \node[circle,fill=black,inner sep=2pt] (d2) at (2.8,0) {};
    \node[above=4pt] at (c1) {$\alpha_1=1$};
    \node[above=4pt] at (c2) {$\alpha_2=\alpha_3=2$};
    \node[below=4pt] at (d1) {$i^{(1)}=i^{(3)}=1$};
    \node[below=4pt] at (d2) {$i^{(2)}=2$};
    \draw[->] (c1) to[bend left=13] (d1);
    \draw[->] (c2) to[bend left=13] (d2);
    \draw[->] (c2) to[bend left=10] (d1);
    \draw[->] (d1) to[bend left=10] (c2);
    \draw[->] (d2) to[bend left=13] (c2);
    \draw[->] (d1) to[bend left=13] (c1);
  \end{scope}
\end{tikzpicture}
\caption{Gluing the two $i$-vertices of values $1$ and $3$.}
\label{fig-gluing-i-vertices}
\end{figure}

More generally, let $\pi=\{B_1,\ldots,B_l\}$ be a partition of the set
of distinct $i$-vertices of $g(i,\alpha)$.
We say that the resulting graph is obtained by
\emph{gluing the $i$-vertices according to $\pi$} if, for every
$1\le j\le l$, all the $i$-vertices belonging to $B_j$ are glued into
one $i$-vertex.
The resulting graph has exactly $l=|\pi|$ distinct $i$-vertices.

\begin{remark}
The gluing of $i$-vertices is the reverse operation of the splitting of an $i$-vertex.
More precisely, if an $i$-vertex is split into two $i$-vertices, then gluing these two $i$-vertices recovers the original graph.
Conversely, if several $i$-vertices are glued into one $i$-vertex, the original graph can be recovered by successively splitting the glued $i$-vertex and assigning the corresponding pairs of down and up edges to the original $i$-vertices.
\end{remark}

\begin{remark}
Any graph obtained from a $\Delta(p;\alpha)$-graph by gluing or splitting $i$-vertices is still a $\Delta(p;\alpha)$-graph.
Indeed, neither operation changes the cyclic order of the $p$ pairs of down and up edges, and the resulting graph remains connected with $p$ down edges and $p$ up edges appearing alternately.    
\end{remark}

\subsection{Classification of graphs} \label{sec:graph-classify}

In this subsection, we classify the $\Delta(p;\alpha)$-graphs defined in Subsection \ref{sec:graph-def}.

For any given sequence $\alpha = (\alpha_1, \ldots, \alpha_p) \in [m]^p$, we classify the graphs in $\Delta(p;\alpha)$ as follows.
The set of graphs $\Delta^{(1)}(p;\alpha)$ consists of all $\Delta(p;\alpha)$-graphs $g(i,\alpha)$ such that $e_u(a,b) = e_d(a,b)$ for all $1 \le a \le m$ and $1 \le b \le n$.
In this case, we write $e(a,b) = e_u(a,b)$.
We denote by $\Delta^{(2)}(p;\alpha)$ the remaining $\Delta(p;\alpha)$-graphs.

The following remark contains two observations about the operations introduced in Subsection \ref{sec:graph-def}.

\begin{remark} \label{Rmk-glue-split-Delta(1)}
Suppose that $g(i,\alpha)\in\Delta^{(1)}(p;\alpha)$.
\begin{enumerate}
    \item Then any graph obtained from $g(i,\alpha)$ by gluing $i$-vertices also belongs to $\Delta^{(1)}(p;\alpha)$.
    Indeed, suppose that $v_1,\ldots,v_h$ are glued into a single $i$-vertex $v$. For every value $a$ occurring in $\alpha$, since $e_d(a,v_j)=e_u(a,v_j)$ for all $1\le j\le h$, we obtain that
    \begin{align*}
        e_d(a,v)
        = \sum_{j=1}^h e_d(a,v_j)
        = \sum_{j=1}^h e_u(a,v_j)
        = e_u(a,v).
    \end{align*}
    Therefore, gluing $i$-vertices preserves the defining property of $\Delta^{(1)}(p;\alpha)$.
    \item A graph obtained by splitting an $i$-vertex does not necessarily belong to $\Delta^{(1)}(p;\alpha)$.
    More precisely, suppose that an $i$-vertex $v$ is split into $v_1$ and $v_2$ according to a non-empty { proper} subset $J\subset I$.
    Then the resulting graph belongs to $\Delta^{(1)}(p;\alpha)$ if and only if, { for every value $a$ occurring in $\alpha$},
    \begin{align*}
        \#\{u\in J:\alpha_u=a\}
        =
        \#\{u\in J:\alpha_{u+1}=a\}.
    \end{align*}
    Indeed, the two sides are exactly $e_d(a,v_1)$ and $e_u(a,v_1)$, respectively. Since the original vertex $v$ satisfies $e_d(a,v)=e_u(a,v)$, the equality for $v_1$ holds if and only if it holds for $v_2$; all other $i$-vertices are unchanged.
\end{enumerate}
\end{remark}

Next, we introduce two operations for graphs belonging to $\Delta^{(1)}(p;\alpha)$.
\begin{definition}
Let $\alpha\in [m]^p$ and $i \in [n]^p$ be two arbitrary sequences such that the graph $g(i,\alpha) \in \Delta^{(1)}(p;\alpha)$.
For any $1 \le a \le m$ and $1 \le b \le n$, for the $\alpha$-vertex with value $a$ and $i$-vertex with value $b$, we replace all the directed edges (including up edges and down edges) between the two vertices by $e(a,b)$ many non-directed edges. We call the resulting graph the \emph{reduced graph} of $g(i,\alpha)$ and denote it by $g^{red}(i,\alpha)$.
\end{definition}

\begin{definition}
Let $\alpha\in [m]^p$ and $i \in [n]^p$ be two arbitrary sequences such that the graph $g(i,\alpha) \in \Delta^{(1)}(p;\alpha)$.
For any $1 \le a \le m$ and $1 \le b \le n$, for the $\alpha$-vertex with value $a$ and $i$-vertex with value $b$, if $e(a,b)>0$, then we replace all the coincident directed edges between the two vertices with one non-directed edge. We call the new graph the \emph{simplification} of $g(i,\alpha)$ and we use the notation $g^{simp}(i,\alpha)$.
\end{definition}

\begin{remark} \label{Rmk-red-simp-properties}
The graphs $g^{red}(i,\alpha)$, $g^{simp}(i,\alpha)$ and $g(i,\alpha)$ have the same set of vertices.
The graph $g^{red}(i,\alpha)$ is connected with exactly $p$ edges.
The graph $g^{simp}(i,\alpha)$ is connected without multiple edges.
\end{remark}

For a given sequence $\alpha\in[m]^p$, we denote by $\Delta^{(1)}_0(p;\alpha)$ the collection of all graphs $g(i,\alpha)\in\Delta^{(1)}(p;\alpha)$ such that $g^{red}(i,\alpha)$ does not have multiple edges and $g^{simp}(i,\alpha)$ does not contain any cycles. For every such graph,
\begin{align*}
    g^{red}(i,\alpha) = g^{simp}(i,\alpha).
\end{align*}
By Remark \ref{Rmk-red-simp-properties}, this graph is connected, has exactly $p$ edges, and has the same vertices as $g(i,\alpha)$. Moreover, $g^{red}(i,\alpha)$ is a tree, and hence $g(i,\alpha)$ has exactly $p+1$ distinct vertices. These graphs correspond to the leading terms in the moment expansion. We collect some results on them from \cites{BYY2022,Yuan2024}.

For a given sequence $\alpha \in \cC_{s,p}$, the following lemma counts the number of canonical sequences $i \in \cC_{p+1-s,p}$ such that $g(i,\alpha) \in {\Delta^{(1)}_0(p;\alpha)}$.

\begin{lemma} {(\cite{BYY2022}*{Lemma 3.1})} \label{lem-Bai}
For any $1 \le s \le p$ and any sequence $\alpha \in \cC_{s,p}$, there is at most one sequence $i \in \cC_{p+1-s,p}$ such that $g(i,\alpha) \in {\Delta^{(1)}_0(p;\alpha)}$. We denote by $\cC_{s,p}^{(1)}$ the set of such canonical sequences $\alpha$. Then the number of elements in $\cC_{s,p}^{(1)}$ is
\begin{align*}
	\dfrac{1}{p} \binom{p}{s-1} \binom{p}{s}.
\end{align*}
\end{lemma}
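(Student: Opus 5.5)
The plan has two parts: first the structural fact that a $\Delta^{(1)}_0$-graph is a doubled tree traversed as a closed walk, which gives uniqueness of $i$; then a bijection with non-crossing partitions, which gives the count.

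\emph{Structure and uniqueness.} Let $g(i,\alpha)\in\Delta^{(1)}_0(p;\alpha)$. Then $g^{red}(i,\alpha)$ is a tree $T$ with $p$ edges on $s$ $\alpha$-vertices and $p+1-s$ $i$-vertices. Each tree edge has multiplicity one in $g^{red}(i,\alpha)$, so it carries exactly one down edge and one up edge. The closed walk $\alpha_1\to i^{(1)}\to\alpha_2\to\cdots\to i^{(p)}\to\alpha_1$ therefore uses every edge of $T$ exactly twice, once in each direction. A closed walk on a tree with this property is a depth-first traversal. In particular, whenever the walk reaches a vertex, the next step either goes to a vertex never visited before or returns along the edge by which the current vertex was first entered.

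Now fix $\alpha\in\cC_{s,p}$ and determine $i$ step by step. At the step $\alpha_u\to i^{(u)}\to\alpha_{u+1}$, suppose $i^{(u)}$ is an old $i$-vertex $b$. Then the edge $\alpha_u b$ is being traversed for the second time. Hence $b$ must be the parent of $\alpha_u$ in the traversal, which is determined by the past. Moreover, the subtree of $\alpha_u$ must be finished, and $\alpha_{u+1}$ must then be either an unexplored child of $b$ or the parent of $b$.

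Conversely, $i^{(u)}$ must be new exactly when the step $i^{(u)}\to\alpha_{u+1}$ goes down into a new vertex, or when $\alpha_{u+1}=\alpha_u$ (a leaf visit). So I will show by induction on $u$ that the given $\alpha$ forces, at each step, whether $i^{(u)}$ is new or old, and if old, which vertex it is. Combined with canonical labelling of $i$, this yields at most one $i$. The main bookkeeping is to check that "new versus back" is decided by whether $\alpha_{u+1}$ is a new value or the value of an ancestor; a non-ancestor old value makes the sequence infeasible.

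\emph{Counting.} For the cardinality, I will construct a bijection between $\cC_{s,p}^{(1)}$ and non-crossing partitions of $[p]$ with $s$ blocks. The partition is given by the blocks $\{u:\alpha_u=a\}$, $a\in[s]$. The count $\frac1p\binom{p}{s-1}\binom{p}{s}$ is the Narayana number, so the bijection finishes the proof.

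To show the partition is non-crossing, I will argue from the traversal. Suppose $\alpha_u=\alpha_w=a$ with $u<w$. The walk between times $u$ and $w$ starts and ends at $a$ in a tree, so it stays inside the branches hanging at $a$. Any other vertex $a'$ visited strictly between $u$ and $w$ is never visited outside this interval, since its branch is finished before returning to $a$. This is exactly the non-crossing property. I will also check that $\alpha$ determines the partition and vice versa, using canonicity.

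For the reverse direction, given a non-crossing partition $\sigma$, I take $\alpha$ to be the canonical labelling of its blocks. I then build $i$ via the Kreweras complement $K(\sigma)$: set $i^{(u)}$ to the label of the block of $K(\sigma)$ containing $u$, with suitable indexing. I then verify that $g(i,\alpha)$ is a doubled tree. The vertex count is $|\sigma|+|K(\sigma)|=s+(p+1-s)=p+1$, and the graph is connected with every pair of edges balanced, so $g^{red}(i,\alpha)$ has $p$ edges on $p+1$ vertices and is a tree.

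\emph{Main obstacle.} The hardest step is making the Kreweras construction precise, in particular verifying the balance condition $e_u=e_d$ for each edge. If this is awkward, the fallback is a direct count: encode the depth-first traversal as a Dyck path of length $2p$ whose steps alternate between the $\alpha$-layer and the $i$-layer. The number $s$ of $\alpha$-vertices then equals the number of peaks at even levels, which is a known Narayana statistic. Since the lemma is cited from \cite{BYY2022}, it is also legitimate to reduce to that reference once the structural part is in place.
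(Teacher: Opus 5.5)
Your count of $\cC_{s,p}^{(1)}$ is sound, but the uniqueness half rests on a rule that is false. You assert that $i^{(u)}$ is new exactly when $\alpha_{u+1}$ is a new value or $\alpha_{u+1}=\alpha_u$, so that ``new versus back'' can be read off from $\alpha_{u+1}$. This contradicts your own preceding sentence: when $i^{(u)}$ is the parent $b$ of $\alpha_u$, the vertex $\alpha_{u+1}$ may be an unexplored child of $b$, i.e.\ a new value. Concretely, take $\alpha=(1,2,3)$. Then $p+1-s=1$, and the only candidate $i=(1,1,1)$ does give a graph in $\Delta^{(1)}_0(3;\alpha)$ (a star centered at one $i$-vertex). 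Yet at $u=2$ the value $\alpha_3=3$ is new while $i^{(2)}=i^{(1)}$ is old. Your rule would therefore make $i^{(2)}$ new and miss this $i$ entirely. Whenever $\alpha_{u+1}$ is new, the history of the walk does not determine $i^{(u)}$, so the forward induction cannot be run as you set it up.

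What is missing is look-ahead in $\alpha$. If $\alpha_u\ne\alpha_1$ and $i^{(u)}$ is the parent of $\alpha_u$, then the subtree of $\alpha_u$ is closed and $\alpha_u$ never occurs again. If instead $i^{(u)}$ is a fresh vertex $x$ above $\alpha_{u+1}$, the walk must later traverse $x\to\alpha_u$, so $\alpha_u$ recurs at a position in $(u+1,p]$. Hence $i^{(u)}$ is new if and only if $\alpha_u=\alpha_1$ or $\alpha_u$ occurs again after position $u$. This depends on all of $\alpha$, and with it your induction goes through.

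A cleaner repair matches your counting part. View $\sigma$ as the permutation cycling each $\alpha$-block in increasing order. In a depth-first traversal, the cyclic successor of $u$ among the positions carrying the $i$-vertex $i^{(u)}$ is the cyclic predecessor of $u+1$ in its $\alpha$-block. To verify this, check the three cases $\alpha_{u+1}=\alpha_u$, $\alpha_{u+1}$ new, and $\alpha_{u+1}$ the parent of $i^{(u)}$. So the partition induced by $i$ is forced to be the Kreweras complement, namely the cycles of $\sigma^{-1}\gamma$ with $\gamma=(1\,2\,\cdots\,p)$, and uniqueness follows. The same identity $\alpha_{\sigma^{-1}(u+1)}=\alpha_{u+1}$ shows that on each block of $\sigma^{-1}\gamma$ the multisets of $\alpha_u$ and of $\alpha_{u+1}$ coincide. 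This is exactly the balance $e_u=e_d$ you flagged as the main obstacle.

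Since the cardinality statement does not use uniqueness, the rest of your route stands; the paper itself gives no proof and imports the lemma from \cite{BYY2022}. One correction to your Dyck-path fallback: $s-1$ is the number of up-steps ending at even height, not the number of peaks at even height. For example, $\alpha=(1,2,2)$ gives the path $UUUDDD$ with $s=2$ and no peak at even height.
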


In order to characterize the sequence in $\cC_{s,p}^{(1)}$, we require the following definition.

\begin{definition} \label{def-non-crossing}
A sequence $\alpha=(\alpha_1, \ldots, \alpha_p)$ is called a \emph{crossing} sequence if there exist $j_1<j_2<j_3<j_4$, such that $\alpha_{j_1} = \alpha_{j_3} \neq \alpha_{j_2} = \alpha_{j_4}$.
A sequence is called a \emph{non-crossing} sequence if it is not a crossing sequence.
\end{definition}

The next lemma characterizes the sequence in $\cC_{s,p}^{(1)}$.

\begin{lemma}(\cite{Yuan2024}*{Theorem 2.1}) \label{lem-noncrossing-characterization}
For $1 \le s \le p$, the set $\cC_{s,p}^{(1)}$ is exactly the set of all non-crossing sequences $\alpha \in \cC_{s,p}$.
\end{lemma}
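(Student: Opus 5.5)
We prove the two inclusions separately, working throughout with a canonical $\alpha\in\cC_{s,p}$ and a canonical $i\in\cC_{p+1-s,p}$ such that $g(i,\alpha)\in\Delta^{(1)}_0(p;\alpha)$. For such a pair, $g^{red}(i,\alpha)$ is a tree with $p$ edges on $s$ $\alpha$-vertices and $p+1-s$ $i$-vertices. Each edge has multiplicity one, so every edge of the tree is traversed exactly twice by the closed walk
\begin{align*}
\alpha_1\to i^{(1)}\to\alpha_2\to\cdots\to i^{(p)}\to\alpha_1,
\end{align*}
once down and once up. Hence the closed walk is a depth-first traversal of a plane bipartite tree rooted at $\alpha_1$.

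\emph{Inclusion $\cC_{s,p}^{(1)}\subseteq\{\text{non-crossing}\}$.} Argue by contradiction. Suppose $j_1<j_2<j_3<j_4$ with $\alpha_{j_1}=\alpha_{j_3}=a\neq b=\alpha_{j_2}=\alpha_{j_4}$. Removing the vertex $a$ splits the tree into components. In a depth-first traversal, the portion of the walk strictly between two consecutive visits to $a$ stays inside a single component of the tree minus $a$, namely the subtree hanging from one child edge of $a$, or the parent side. The walk from position $j_1$ to $j_3$ passes through $b$ at $j_2$, so that excursion from $a$ enters the component $T_b$ containing $b$. Once a depth-first walk leaves $T_b$ back through $a$, it never returns, because each edge is used exactly twice. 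This is the key fact: an edge crossed down and back up is never used again. Since $b$ reappears at $j_4>j_3$, the walk must re-enter $T_b$ after time $j_3$, or must never have left it. We handle the parent/child cases by rooting at $\alpha_1$ and noting that the edge joining $a$ to $T_b$ would then be used more than twice. This is a contradiction, so $\alpha$ is non-crossing.

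\emph{Inclusion $\{\text{non-crossing}\}\subseteq\cC_{s,p}^{(1)}$.} Given a non-crossing canonical $\alpha$, we construct $i$ explicitly by induction on $p$. If $s=1$, take $i=(1,\ldots,1)$: the single $i$-vertex is joined to the single $\alpha$-vertex with $e=p$, not multiplicity one. So this base case must instead be understood differently, and we re-examine it. The correct construction for $\alpha=(1,\ldots,1)$ uses distinct $i^{(u)}$, giving a star with $p$ edges and $p+1$ vertices, which matches $p+1-s=p$ $i$-values. In general we define $i^{(u)}=i^{(v)}$ if and only if the positions $u$ and $v$ are ``matched'' in the Kreweras complement of the partition of $[p]$ induced by $\alpha$. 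Concretely, $u$ and $v$ are identified when, viewing the gaps between $\alpha_u,\alpha_{u+1}$ on a circle, the gaps $u$ and $v$ lie in the same region cut out by the non-crossing blocks. A non-crossing partition with $s$ blocks has a Kreweras complement with $p+1-s$ blocks, which gives the right number of $i$-vertices.

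We then verify three properties of this $i$:
\begin{enumerate}
\item $e_u=e_d$, since each region boundary alternates between entering and exiting a block;
\item no multiple edges occur;
\item the graph has $p+1$ vertices and $p$ edges and is connected, hence is a tree.
\end{enumerate}
Alternatively, we can do the induction directly. Non-crossing sequences always contain an "interval block" pattern, namely a maximal run $\alpha_u=\cdots$ of a value not appearing elsewhere between two occurrences of another value, or a value occurring in consecutive positions. Deleting it reduces $p$, and the tree can be extended by attaching leaves. The main obstacle is making the first inclusion rigorous cleanly. For that we intend to use the uniqueness from Lemma~\ref{lem-Bai} together with the count: the non-crossing canonical $s$-sequences of length $p$ are in bijection with non-crossing partitions of $[p]$ with $s$ blocks, and these number $\frac1p\binom{p}{s-1}\binom ps$. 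Therefore, once the second inclusion is proved, equality of cardinalities with Lemma~\ref{lem-Bai} yields the first inclusion for free. This is the route we will actually take: prove the construction, then invoke the Narayana count.
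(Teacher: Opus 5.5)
The paper does not prove this lemma; it imports it from \cite{Yuan2024}, so there is no in-paper argument to compare against. Your final route is sound. You show that every non-crossing $\alpha\in\cC_{s,p}$ admits some $i\in\cC_{p+1-s,p}$ with $g(i,\alpha)\in\Delta^{(1)}_0(p;\alpha)$. You then get equality because non-crossing canonical $s$-sequences correspond to non-crossing partitions of $[p]$ into $s$ blocks, and their Narayana number $\frac1p\binom{p}{s-1}\binom{p}{s}$ equals $\#\cC_{s,p}^{(1)}$ from Lemma~\ref{lem-Bai}. Only the count is used, not the uniqueness. This is not circular provided Lemma~\ref{lem-Bai} is taken from a source that enumerates the leading graphs directly, as the classical Bai--Silverstein count does.

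The trade-off is that your proof consumes two external enumerations. A direct proof of both inclusions would be self-contained and, combined with the Narayana formula, would re-derive Lemma~\ref{lem-Bai} rather than rely on it. The direct inclusion is not the obstacle you feared. Suppose $\alpha_{j_1}=\alpha_{j_3}=a\ne b=\alpha_{j_2}=\alpha_{j_4}$ with $j_1<j_2<j_3<j_4$, and let $T_b$ be the component of $g^{red}(i,\alpha)$ minus the vertex $a$ that contains $b$. Every passage of the closed walk between $T_b$ and its complement uses the single edge joining $a$ to $T_b$. There are at least three such passages between positions $j_1$ and $j_4$, hence at least four in total by parity. But each tree edge is traversed exactly twice, a contradiction. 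No rooting or parent/child case analysis is needed.

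On the construction, two points.

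\emph{The Kreweras verification.} The reason you give for $e_u=e_d$ is not yet an argument. The actual reason is as follows. Let $b_j$ and $b_{j+1}$ be cyclically consecutive positions of the value $a$. No other block can separate the gap after $b_j$ from the gap before $b_{j+1}$ without crossing the pair $b_j,b_{j+1}$. Hence $i^{(b_j)}=i^{(b_{j+1}-1)}$, and the down edge $a\to i^{(b_j)}$ is matched with the up edge $i^{(b_{j+1}-1)}\to a$. This gives $e_d(a,\cdot)=e_u(a,\cdot)$.

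\emph{Item (2) is redundant.} The graph $g^{simp}(i,\alpha)$ is connected on $p+1$ vertices, so it has at least $p$ edges, while $g^{red}(i,\alpha)$ has exactly $p$. Hence $g^{red}=g^{simp}$ and it is a tree.

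Your inductive alternative is cleaner, and its ingredients are already in the paper. The first part of the proof of Proposition~\ref{prop-Sn-recursion}(1), which does not use the present lemma, gives a value $a$ occupying a consecutive block $\{u,\dots,v\}$ whose deletion preserves non-crossingness. Read indices cyclically. Re-insert the block with $i^{(v)}=i^{(u-1)}$ and fresh values $i^{(u)},\dots,i^{(v-1)}$. This attaches one new $\alpha$-vertex and $\deg_a(\alpha)-1$ new leaves through $\deg_a(\alpha)$ new edges, each traversed once in each direction. The old up edge $i^{(u-1)}\to\alpha_{v+1}$ keeps its endpoints, so the tree property and the count $p+1-s$ of $i$-values are preserved. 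The base case $s=1$ is the star $i=(1,2,\dots,p)$, as you eventually noted.
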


We next present two results from \cite{Yuan2024} for non-crossing sequences that will be used in the moment calculation.
The first lemma indicates the graph $g(i,\alpha)$ for non-crossing $\alpha$.

\begin{lemma} {(\cite{Yuan2024}*{Proposition 2.1})} \label{lem-noncrossing-graph-dichotomy}
Let $\alpha\in\cC_{s,p}^{(1)}$ and let $i\in\cC_{r,p}$. Then either
\begin{align*}
    g(i,\alpha)\in\Delta^{(1)}(p;\alpha),
\end{align*}
or there exist $a\in[s]$ and $b\in[r]$ such that
\begin{align} \label{eq-unit-imbalance}
    |e_u(a,b)-e_d(a,b)|=1.
\end{align}
\end{lemma}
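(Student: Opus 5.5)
The plan is to argue by induction on the number $s$ of distinct values in $\alpha$. The main tool is the standard fact that a non-crossing sequence has a value whose occurrences form a set of consecutive positions, which can be peeled off. Since equivalence and canonical relabeling do not affect the quantities $e_u,e_d$, we may freely drop canonicity along the way. We will work with the more general claim: for any non-crossing $\alpha\in[m]^p$ with $s$ distinct values and any $i\in[n]^p$, either $e_u(a,b)=e_d(a,b)$ for all $a,b$, or some pair has $|e_u(a,b)-e_d(a,b)|=1$. The base case $s=1$ is immediate. There $\alpha_u=1$ for all $u$, so the down edges from $1$ go to $i^{(1)},\ldots,i^{(p)}$ and the up edges into $1$ come from the same multiset $i^{(p)},i^{(1)},\ldots,i^{(p-1)}$. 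Hence $g(i,\alpha)\in\Delta^{(1)}(p;\alpha)$.

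For $s\ge2$, first choose an interval value. Among the values of $\alpha$ pick one, $a$, whose occurrence set has minimal span $\max-\min$. If some position strictly between two occurrences of $a$ carried a value $a'\neq a$, then non-crossingness forces all occurrences of $a'$ to lie strictly between consecutive occurrences of $a$. That would give $a'$ a smaller span, a contradiction. So $\{u:\alpha_u=a\}=\{u,u+1,\ldots,u+l-1\}$ is an interval, and since $s\ge2$ we have $l<p$.

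Next, compute the imbalance at $a$. The down edges out of $a$ go to $i^{(u)},\ldots,i^{(u+l-1)}$, and the up edges into $a$ come from $i^{(u-1)},\ldots,i^{(u+l-2)}$, with indices taken cyclically. Cancelling the common part gives
\begin{align*}
e_d(a,b)-e_u(a,b)=\mathbf 1\{b=i^{(u+l-1)}\}-\mathbf 1\{b=i^{(u-1)}\}.
\end{align*}
If $i^{(u+l-1)}\neq i^{(u-1)}$, then $b=i^{(u+l-1)}$ gives the unit imbalance \eqref{eq-unit-imbalance} and we are done.

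Otherwise $i^{(u-1)}=i^{(u+l-1)}$, every pair $(a,b)$ is balanced, and we reduce. Delete positions $u,\ldots,u+l-1$ from both $\alpha$ and $i$. This yields sequences $\alpha',i'$ of length $p-l$, where $\alpha'$ is still non-crossing with $s-1$ distinct values. The key check is that $e_u,e_d$ at every $\alpha$-vertex $a'\neq a$ are unchanged. Down edges $\alpha_j\to i^{(j)}$ with $j$ outside the block are untouched. The only up edge whose source changes is the one into $\alpha_{u+l}$: in $g(i',\alpha')$ it comes from $i^{(u-1)}$ instead of $i^{(u+l-1)}$, and these are the same $i$-value. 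By induction, $g(i',\alpha')$ is either balanced, in which case so is $g(i,\alpha)$, or has a unit imbalance at some $(a',b)$ with $a'\neq a$, which is then also a unit imbalance in $g(i,\alpha)$.

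The only delicate points are the choice of the interval block, which must be made cyclically consistent with the convention $\alpha_{p+1}=\alpha_1$, and the bookkeeping showing that deleting the block preserves all counts at the other $\alpha$-vertices. The latter is where I expect the care to be needed.
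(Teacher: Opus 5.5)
Your proof is correct. The paper does not prove this lemma (it quotes it from \cite{Yuan2024}), but your induction is the same block-deletion mechanism the paper uses in Proposition~\ref{prop-Sn-recursion}: the minimal-span value gives a consecutive block, balance at $a$ reduces to $i^{(u-1)}=i^{(v)}$ as in \eqref{eq-boundary-identification}, and deleting the block leaves every count at the other $\alpha$-vertices unchanged. The paper applies it to evaluate $S_n(\alpha)$, whereas you apply it to carry a unit imbalance up the induction. The cyclic point you flag is harmless: the linear minimal-span choice together with the convention $i^{(0)}=i^{(p)}$ already works, because $l<p$.
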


The second lemma provides the relationship between the unique leading graph and all the other graphs in $\Delta^{(1)}(p;\alpha)$ precisely.

\begin{lemma} {(\cite{Yuan2024}*{proof of Proposition 2.3})} \label{lem-gluing-bijection}
Fix $\alpha\in\cC_{s,p}^{(1)}$. Let $i_0\in\cC_{p+1-s,p}$ be the unique canonical sequence such that
\begin{align*}
    g(i_0,\alpha)\in\Delta^{(1)}_0(p;\alpha).
\end{align*}
Fix $1\le r\le p+1-s$. Consider a partition of $[p+1-s]$ into $r$ non-empty subsets $V_1,\ldots,V_r$. For each subset $V_t$ with $1 \le t \le r$, glue the $i$-vertices of $g(i_0,\alpha)$ whose values are in $V_t$, and then canonically relabel the resulting sequence. This construction defines a bijection from the set of such partitions to
\begin{align} \label{eq-Delta1-canonical-i}
    \{i\in\cC_{r,p}:g(i,\alpha)\in\Delta^{(1)}(p;\alpha)\}.
\end{align}
The set in \eqref{eq-Delta1-canonical-i} is empty when $p+1-s<r\le p$.
\end{lemma}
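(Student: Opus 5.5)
The idea is to pass between partitions and graphs in both directions, using the two observations in Remark \ref{Rmk-glue-split-Delta(1)}. Write $i_0\in\cC_{p+1-s,p}$ for the leading sequence; its vertices are $1,\dots,p+1-s$. For a partition $\{V_1,\dots,V_r\}$ of $[p+1-s]$, let $\Phi(\{V_t\})$ be the canonical relabeling of the sequence obtained by gluing according to the partition.

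\textbf{Step 1: $\Phi$ maps into the set \eqref{eq-Delta1-canonical-i}.} The glued graph has exactly $r$ distinct $i$-vertices, so $\Phi(\{V_t\})\in\cC_{r,p}$. By Remark \ref{Rmk-glue-split-Delta(1)}(1), gluing preserves membership in $\Delta^{(1)}(p;\alpha)$.

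\textbf{Step 2: injectivity.} The partition can be read off from $\Phi(\{V_t\})$. Two positions $u,u'$ satisfy $\Phi(\{V_t\})^{(u)}=\Phi(\{V_t\})^{(u')}$ exactly when $i_0^{(u)}$ and $i_0^{(u')}$ lie in the same block. Every value of $i_0$ is attained at some position, so the partition $\{V_t\}$ is recovered from the level-set partition of $\Phi(\{V_t\})$ pushed forward through $i_0$. Canonical relabeling does not change level sets, so distinct partitions give distinct sequences.

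\textbf{Step 3: surjectivity (the main obstacle).} Take $i\in\cC_{r,p}$ with $g(i,\alpha)\in\Delta^{(1)}(p;\alpha)$. It suffices to show that the level-set partition of $i$ is coarser than that of $i_0$, i.e.
\[
i_0^{(u)}=i_0^{(u')}\ \Longrightarrow\ i^{(u)}=i^{(u')}.
\]
Then $V_t:=\{i_0^{(u)}: i^{(u)}=t\}$ is a well-defined partition and $\Phi(\{V_t\})=i$.

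I would prove this by induction on the number of edges of $g(i,\alpha)$ that are not ``tree-like,'' via successive splitting.
\begin{itemize}
\item If $i$ is not coarser than $i_0$, choose an $i$-vertex $v$ of $g(i,\alpha)$ whose position set $I$ meets at least two level sets of $i_0$.
\item Split $v$ along $J=I\cap\{u: i_0^{(u)}=w\}$ for a suitable $i_0$-vertex $w$.
\item Check the balance criterion of Remark \ref{Rmk-glue-split-Delta(1)}(2): $\#\{u\in J:\alpha_u=a\}=\#\{u\in J:\alpha_{u+1}=a\}$.
\end{itemize}
The balance check should follow from $g(i_0,\alpha)\in\Delta^{(1)}_0$: in the tree $g^{red}(i_0,\alpha)$, the positions visiting $w$ are balanced with respect to each $\alpha$-value. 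But $J$ is only a part of the positions visiting $w$, so some care is needed.

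An alternative I would try first is a cleaner reformulation. Among all $\Delta^{(1)}$-refinements of $i$, take a maximal refinement $i'$, one that cannot be split further while staying in $\Delta^{(1)}$. Then show, using the non-crossing structure of Lemma \ref{lem-noncrossing-characterization}, that $g(i',\alpha)$ has $p+1-s$ $i$-vertices and simple-tree reduced graph, so that $i'=i_0$ by the uniqueness in Lemma \ref{lem-Bai}. Then $i$ is a gluing of $i_0$, as required.

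The counting input for this is the following. A graph in $\Delta^{(1)}$ has a reduced graph with $p$ edges that is connected on $s+r$ vertices, so $r\le p+1-s$, with equality iff the reduced graph is a simple tree, i.e.\ $g\in\Delta^{(1)}_0$. This same count also gives emptiness of \eqref{eq-Delta1-canonical-i} for $r>p+1-s$. The delicate point is that a non-tree $\Delta^{(1)}$ graph always admits a balanced split. I expect to get this from a multiple edge or cycle in $g^{red}$ by splitting off one loop of the closed walk, and Lemma \ref{lem-noncrossing-graph-dichotomy} should control the imbalances produced along the way.
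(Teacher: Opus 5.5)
\textbf{There is a genuine gap: Step 3.} Your Steps 1 and 2 are fine. Your vertex count ($r\le p+1-s$, with equality iff $\Delta^{(1)}_0$) correctly gives emptiness for $r>p+1-s$. But Step 3 is the whole content of the lemma, and you never prove it: that every $i$ with $g(i,\alpha)\in\Delta^{(1)}(p;\alpha)$ has level-set partition coarser than that of $i_0$.

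Both of your routes reduce this to one claim: a $\Delta^{(1)}$-graph with fewer than $p+1-s$ $i$-vertices admits a balanced split. That claim is only asserted. The mechanism you suggest is also not valid as a graph-theoretic principle. You propose that a multiple edge or cycle in $g^{red}$ lets you split off a loop of the closed walk. Take the crossing sequence $\alpha=(1,2,1,2)$ and $i=(1,1,2,2)$. Then $g(i,\alpha)\in\Delta^{(1)}(4;\alpha)$ and its reduced graph is a simple $4$-cycle. Yet none of the possible splits ($J=\{1\}$ or $\{2\}$ at the first $i$-vertex, $J=\{3\}$ or $\{4\}$ at the second) satisfies the balance criterion of Remark \ref{Rmk-glue-split-Delta(1)}. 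So non-crossing must enter in a specific structural way. Lemma \ref{lem-noncrossing-graph-dichotomy} describes only graphs outside $\Delta^{(1)}$, so it does not by itself produce a balanced split. Your Plan A has the same hole: whether a partial level set $J=I\cap\{u:i_0^{(u)}=w\}$ is balanced is exactly what is in question.

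\textbf{How to close it.} The paper does not reprove this lemma; it cites it. The missing idea is the consecutive-block recursion used in the proof of Proposition \ref{prop-Sn-recursion}, which does not depend on the present lemma.
\begin{enumerate}
\item After a cyclic shift, some value $a$ occupies exactly the positions $u,\ldots,v$.
\item Balance at the $\alpha$-vertex $a$ is equivalent to the single equation $i^{(v)}=i^{(u-1)}$ of \eqref{eq-boundary-identification}, and $i^{(u)},\ldots,i^{(v-1)}$ are unconstrained.
\item Balance at every other $\alpha$-vertex is exactly $\Delta^{(1)}$-membership of the shortened pair $(i',\alpha')$.
\item By induction on $s$, the set $\{i\in[n]^p: g(i,\alpha)\in\Delta^{(1)}(p;\alpha)\}$ is exactly the set of $i$ constant on the blocks of a fixed partition $P_0$ of $[p]$. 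The block count is $|P_0(\alpha)|=|P_0(\alpha')|+\deg_a(\alpha)-1=p+1-s$.
\item The canonical sequence whose level sets are exactly $P_0$ has $p+1-s$ values. By Lemma \ref{lem-yuan-vertex-bound}(1) and Lemma \ref{lem-Bai}, it is $i_0$.
\end{enumerate}
Surjectivity and emptiness for $r>p+1-s$ then follow at once, with no splitting argument needed.
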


\section[Exact moment calculation]{Exact moment calculation}
\label{sec:moment}

We compute the moment
\begin{align*}
	\dfrac{1}{n^k} \bE \big[ \Tr M_{n,k,m}^p \big],
\end{align*}
for any $p \in \bN_+$.
Recalling the convention $\alpha_{p+1} = \alpha_{1}$ and the i.i.d. setting, we have
\begin{align} \label{eq-moment-1}
	\dfrac{1}{n^k} \bE \big[ \Tr M_{n,k,m}^p \big]
	=& \dfrac{1}{n^k} \sum_{\alpha_1, \ldots, \alpha_p = 1}^m \left( \prod_{t=1}^p \tau_{\alpha_t} \right) \bE \Big[ \Tr \big( Y_{\alpha_1} Y_{\alpha_1}^* Y_{\alpha_2} Y_{\alpha_2}^* \cdots Y_{\alpha_p} Y_{\alpha_p}^*\big) \Big] \nonumber \\
	=& \dfrac{1}{n^k} \sum_{\alpha_1, \ldots, \alpha_p = 1}^m \left( \prod_{t=1}^p \tau_{\alpha_t} \right) \bE \Bigg[ \prod_{l=1}^k \Tr \left( \left( \y_{\alpha_{1}}^{(l)} \right) \left( \y_{\alpha_{1}}^{(l)} \right)^* \ldots \y_{\alpha_p}^{(l)} \left( \y_{\alpha_p}^{(l)} \right)^* \right) \Bigg] \nonumber \\
	=& \dfrac{1}{n^k} \sum_{\alpha_1, \ldots, \alpha_p = 1}^m \left( \prod_{t=1}^p \tau_{\alpha_t} \right) \left( \bE \Bigg[ \Tr \left( \y_{\alpha_1}^{(1)} \left( \y_{\alpha_1}^{(1)} \right)^* \ldots \y_{\alpha_p}^{(1)} \left( \y_{\alpha_p}^{(1)} \right)^* \right) \Bigg] \right)^k \nonumber \\
    =& \dfrac{1}{n^k} \sum_{\alpha_1, \ldots, \alpha_p = 1}^m \left( \prod_{t=1}^p \tau_{\alpha_t} \right) S_n(\alpha)^k,
\end{align}
where
\begin{align} \label{eq-def-Sn}
    S_n(\alpha)
    :=& \bE\left[ \Tr\left( \y_{\alpha_1} \left( \y_{\alpha_1} \right)^* \cdots \y_{\alpha_p} \left( \y_{\alpha_p} \right)^* \right) \right],
\end{align}
for every $p\in\bN_+$ and every sequence $\alpha$.

For $i=(i^{(1)},\ldots,i^{(p)})\in[n]^p$, we write
\begin{align} \label{eq-def-E(i,alpha)}
    E(i,\alpha)
    :=& \bE\left[ \prod_{t=1}^p \overline{\left( \y_{\alpha_t} \right)_{i^{(t)}}} \left( \y_{\alpha_{t+1}} \right)_{i^{(t)}} \right].
\end{align}
For canonical sequences $\alpha \in \cC_{s,p}$ and $i \in \cC_{r,p}$ with $1\le r,s \le p$, we consider the corresponding graph $g(i,\alpha)$ given in Section \ref{sec:graph-def}.
We associate the random variable $\overline{\big( \y_{\alpha_t} \big)_{i^{(t)}}}$ with the down edge from $\alpha$-vertex $\alpha_t$ to $i$-vertex $i^{(t)}$ in $g(i,\alpha)$.
We also associate the random variable $\big( \y_{\alpha_{t+1}} \big)_{i^{(t)}}$ with the up edge from $i$-vertex $i^{(t)}$ to $\alpha$-vertex $\alpha_{t+1}$.
By the i.i.d. setting of the entries, we obtain from \eqref{eq-def-E(i,alpha)} that
\begin{align} \label{eq-E-factorization}
    E(i,\alpha)
    =& \prod_{a=1}^s\prod_{b=1}^r \bE \left[ \overline{\left( \y_a \right)_b}^{\,e_d(a,b)} \left( \y_a \right)_b^{e_u(a,b)} \right]
    = \prod_{a=1}^s \prod_{b=1}^r \bE \left[ \overline{\xi_1}^{e_d(a,b)} \xi_1^{e_u(a,b)} \right].
\end{align}

Recall the equivalence relationship defined in Section \ref{sec:graph-def}. By the i.i.d. setting, we have that $E(i,\alpha) = E(i',\alpha')$ if $g(i,\alpha) \sim g(i',\alpha')$.
Thus, expanding the trace in \eqref{eq-def-Sn} gives
\begin{align} \label{eq-Sn-canonical-sum}
    S_n(\alpha)
    = \sum_{i^{(1)},\ldots,i^{(p)}=1}^n E(i,\alpha)
    = \sum_{r=1}^{p} n(n-1)\cdots(n-r+1) \sum_{i\in\cC_{r,p}}E(i,\alpha).
\end{align}
Moreover, for any $\alpha\in\cC_{s,p}, \varphi\in\cI_{s,m}$, it holds that $S_n(\varphi(\alpha)) = S_n(\alpha)$.
Hence, for every $p\in\bN_+$, we obtain from \eqref{eq-moment-1} that
\begin{align} \label{eq-exact-raw-moment}
    \dfrac{1}{n^k} \bE\left[\Tr M_{n,k,m}^p\right]
    =& \dfrac{1}{n^k} \sum_{s=1}^{p}\sum_{\alpha \in \cC_{s,p}} \left( \sum_{\varphi\in\cI_{s,m}} \prod_{t=1}^p\tau_{\varphi(\alpha_t)} \right) S_n(\alpha)^k.
\end{align}

In order to compute \eqref{eq-exact-raw-moment}, we handle the two cases $\alpha \in \cC_{s,p}^{(1)}$ and $\alpha \in \cC_{s,p} \setminus \cC_{s,p}^{(1)}$ step by step.

\bigskip
\noindent \textbf{Step 1.} In this step, we focus on the case that $\alpha$ is non-crossing. We have the following lemma for $E(i,\alpha)$ given by \eqref{eq-def-E(i,alpha)}.

\begin{lemma} \label{lem-E-noncrossing}
Under Assumptions \ref{Assumption-distribution} and \ref{Assumption-mixed-moments}, let $\alpha\in\cC_{s,p}^{(1)}$ and let $i\in\cC_{r,p}$. Then
\begin{align} \label{eq-E-noncrossing}
    E(i,\alpha) =
    \begin{cases} \displaystyle
        \prod_{a=1}^s\prod_{b=1}^r m_{2e(a,b)}, & g(i,\alpha) \in \Delta^{(1)}(p;\alpha),\\
        0, & g(i,\alpha) \notin \Delta^{(1)}(p;\alpha).
    \end{cases}
\end{align}
\end{lemma}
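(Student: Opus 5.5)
The plan is to start from the factorization \eqref{eq-E-factorization},
\begin{align*}
    E(i,\alpha)=\prod_{a=1}^s\prod_{b=1}^r \bE\left[\overline{\xi_1}^{\,e_d(a,b)}\xi_1^{e_u(a,b)}\right],
\end{align*}
and to treat the two cases of the dichotomy in Lemma \ref{lem-noncrossing-graph-dichotomy} separately. The noncrossing hypothesis $\alpha\in\cC_{s,p}^{(1)}$ enters only through this lemma. Assumption \ref{Assumption-distribution} guarantees that every factor is a finite number, since $|\bE[\overline{\xi_1}^{q}\xi_1^{p'}]|\le m_{p'+q}<\infty$. Consequently the whole product is well defined, and it vanishes as soon as a single factor vanishes.

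\emph{Case} $g(i,\alpha)\in\Delta^{(1)}(p;\alpha)$. Here $e_u(a,b)=e_d(a,b)=e(a,b)$ for all $a,b$, so each factor equals
\begin{align*}
    \bE\left[\overline{\xi_1}^{\,e(a,b)}\xi_1^{e(a,b)}\right]=\bE\left[|\xi_1|^{2e(a,b)}\right]=m_{2e(a,b)}.
\end{align*}
Pairs $(a,b)$ not joined by any edge have $e(a,b)=0$ and contribute $m_0=1$. This gives the first line of \eqref{eq-E-noncrossing}.

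\emph{Case} $g(i,\alpha)\notin\Delta^{(1)}(p;\alpha)$. By Lemma \ref{lem-noncrossing-graph-dichotomy}, there exist $a\in[s]$ and $b\in[r]$ with $|e_u(a,b)-e_d(a,b)|=1$. Set $p'=e_u(a,b)$ and $q=e_d(a,b)$, so that the corresponding factor is $\bE[\xi_1^{p'}\overline{\xi_1}^{q}]$ with $|p'-q|=1$.
\begin{itemize}
    \item If both $p',q\ge1$, this factor vanishes by Assumption \ref{Assumption-mixed-moments}.
    \item If one of them is zero, then $(p',q)\in\{(1,0),(0,1)\}$. The factor is then $\bE[\xi_1]$ or $\bE[\overline{\xi_1}]$, which is $0$ because $\xi_1$ is centered.
\end{itemize}
Either way one factor is zero, so $E(i,\alpha)=0$.

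The only step that needs care is the boundary case where one exponent is zero, because Assumption \ref{Assumption-mixed-moments} is stated only for $p,q\in\bN_+$; centering covers it. All of the combinatorial difficulty is contained in Lemma \ref{lem-noncrossing-graph-dichotomy}, which we take as given. Under Assumption \ref{Assumption-distribution-prime} the argument is shorter: any pair with $e_u\ne e_d$ already gives a zero factor, so the dichotomy lemma is not needed.
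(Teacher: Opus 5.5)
Your proposal is correct and follows essentially the same route as the paper's proof. You start from the factorization \eqref{eq-E-factorization}, invoke the dichotomy of Lemma \ref{lem-noncrossing-graph-dichotomy}, and kill the unbalanced factor by Assumption \ref{Assumption-mixed-moments} when both exponents are positive and by centering when one exponent is zero. Your closing remark on Assumption \ref{Assumption-distribution-prime} also matches the paper's remark.
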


\begin{proof}
If $g(i,\alpha) \in \Delta^{(1)}(p;\alpha)$, then $e_u(a,b) = e_d(a,b) = e(a,b)$ for every pair $(a,b)$, and the first case follows from \eqref{eq-E-factorization}.

Suppose that $g(i,\alpha) \notin \Delta^{(1)}(p;\alpha)$. By Lemma \ref{lem-noncrossing-graph-dichotomy}, there are $a\in[s]$ and $b\in[r]$ such that $|e_u(a,b)-e_d(a,b)|=1$. If both numbers are positive, the corresponding factor in \eqref{eq-E-factorization} is zero by Assumption \ref{Assumption-mixed-moments}. If one of them is zero, the other has to be one, and the corresponding factor is again zero because the entries of $\y$ are centered.
Hence, $E(i,\alpha)=0$.
\end{proof}

\begin{remark}
Under Assumptions \ref{Assumption-distribution} and \ref{Assumption-distribution-prime}, the conclusion of Lemma \ref{lem-E-noncrossing} follows more directly. In fact, it holds for every $\alpha \in \cC_{s,p}$.
If $g(i,\alpha) \notin \Delta^{(1)}(p;\alpha)$, then by the definition of $\Delta^{(1)}(p;\alpha)$, there are $a\in[s]$ and $b\in[r]$ such that $e_u(a,b) \not= e_d(a,b)$. The corresponding factor in \eqref{eq-E-factorization} is zero due to Assumption \ref{Assumption-distribution-prime}. Thus, Lemma \ref{lem-noncrossing-graph-dichotomy} and the separate treatment of a zero exponent are not necessary for this conclusion under Assumption \ref{Assumption-distribution-prime}.
\end{remark}

Under Assumptions \ref{Assumption-distribution} and \ref{Assumption-mixed-moments}, for $\alpha\in\cC_{s,p}^{(1)}$, Lemma \ref{lem-E-noncrossing} and the equivalence of graphs give
\begin{align*}
    S_n(\alpha)
    =\sum_{\substack{i\in[n]^p\\g(i,\alpha)\in\Delta^{(1)}(p;\alpha)}}E(i,\alpha).
\end{align*}

For $\alpha \in \cC_{s,p}^{(1)}$, Lemma \ref{lem-gluing-bijection} and \eqref{eq-Sn-canonical-sum} show that $S_n(\alpha)$ is exactly the weighted sum over all graphs obtained by gluing the $i$-vertices of the unique graph in $\Delta^{(1)}_0(p;\alpha)$. The following proposition first identifies a consecutive part of $\alpha$ and then uses a direct coordinate calculation to evaluate $S_n(\alpha)$ without enumerating the gluing separately.

\begin{proposition} \label{prop-Sn-recursion}
Let $\alpha=(\alpha_1,\ldots,\alpha_p) \in \cC_{s,p}^{(1)}$ with $2\le s\le p$.
\begin{enumerate}[(1)]
    \item There exist $a \in [s]$ and $1\le u\le v\le p$ such that
    \begin{align} \label{eq-alpha-consecutive-for-recursion}
        \{t\in[p]:\alpha_t=a\}=\{u,u+1,\ldots,v\}.
    \end{align}
    For such $a,u,v$, deleting $\alpha_u,\ldots,\alpha_v$ and canonically relabeling the remaining values gives a sequence $\alpha'$ in
    \begin{align*}
        \cC_{s-1,p-\deg_a(\alpha)}^{(1)}.
    \end{align*}
    
    \item Under Assumptions \ref{Assumption-distribution} and \ref{Assumption-mixed-moments}, let $a\in[s]$ and $1\le u\le v\le p$ be any choice satisfying \eqref{eq-alpha-consecutive-for-recursion}, and let $\alpha'$ be the canonical sequence obtained by deleting $\alpha_u,\ldots,\alpha_v$ and canonically relabeling the remaining values. Then
    \begin{align} \label{eq-Sn-recursion}
        S_n(\alpha)
        = \frac{ \bE \left[ \|\y\|^{2\deg_a(\alpha)} \right]}{n} S_n(\alpha').
    \end{align}
\end{enumerate}
\end{proposition}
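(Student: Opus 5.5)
Part (1) is purely combinatorial, and I will treat $\alpha$ as a cyclic word. Since $\alpha$ is non-crossing (Lemma \ref{lem-noncrossing-characterization}), consider the pairs $(a,b)$ of distinct values with $a\ne b$.

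For each value $a$, the occurrences of $a$ cut the cycle $[p]$ into arcs, and each other value lies entirely within one arc. Choose $a$ and one of its arcs so that the arc is nonempty and minimal in length. Pick a value $a'$ occurring in that arc. All occurrences of $a'$ lie inside the arc. If the occurrences of $a'$ were not consecutive (cyclically), then $a'$ would have an arc strictly contained in the chosen one and containing some other value, which contradicts minimality. One must be careful with $s\ge2$ and the case where the gap of $a'$ wraps around. Hence some value occupies a cyclically consecutive block.

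To get a genuinely linear block $\{u,\dots,v\}$, I will argue as follows. If the block of $a'$ wraps around position $p\to1$, then its complement is a linear interval. Choosing the minimal arc in the complement instead gives a linear consecutive block, since that arc avoids the wrap point.

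Deleting this block leaves a sequence that is still non-crossing, because subsequences of non-crossing sequences are non-crossing. After relabeling, it is canonical with $s-1$ values, so it lies in $\cC^{(1)}_{s-1,p-\deg_a(\alpha)}$.

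For part (2), I will compute $S_n(\alpha)=\bE\,\Tr(\prod_t \y_{\alpha_t}\y_{\alpha_t}^*)$ directly. Write $d=\deg_a(\alpha)$. The consecutive factors $\y_a\y_a^*$ repeated $d$ times equal $\|\y_a\|^{2(d-1)}\y_a\y_a^*$. Since $\y_a$ is independent of the other vectors, conditioning on those vectors gives
\[
S_n(\alpha)=\bE\,\Tr\bigl(A\,\bE[\|\y_a\|^{2(d-1)}\y_a\y_a^*]\,B\bigr),
\]
where $A$ and $B$ are the products before and after the block. (If $u=1$ or $v=p$, I use cyclicity of the trace.)

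The key step is to show
\[
\bE[\|\y\|^{2(d-1)}\y\y^*]=\frac{\bE\|\y\|^{2d}}{n}I_n.
\]
For the off-diagonal entries, $\bE[\|\y\|^{2(d-1)}\xi_i\overline{\xi_j}]$ with $i\ne j$ factorizes into terms depending on $\xi_i$ and $\xi_j$ separately. Expanding $\|\y\|^{2(d-1)}$ as a polynomial in the $|\xi_l|^2$, each term contains a factor $\bE[|\xi_i|^{2q}\xi_i]=\bE[\xi_i^{q+1}\overline{\xi_i}^{\,q}]$. This vanishes by Assumption \ref{Assumption-mixed-moments}, or by centering when $q=0$. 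For the diagonal entries, exchangeability makes them all equal. Their sum is the trace $\bE\|\y\|^{2d}$, so each equals $\bE\|\y\|^{2d}/n$.

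Substituting this identity back, $S_n(\alpha)$ becomes $\frac{\bE\|\y\|^{2d}}{n}\bE\,\Tr(AB)$. Here $AB$ is exactly the product corresponding to $\alpha'$, up to relabeling and cyclic rotation, which gives \eqref{eq-Sn-recursion}. The finite moments in Assumption \ref{Assumption-distribution} justify all the expectations.

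I expect the main obstacle to be the combinatorial part (1), in particular producing a non-wrapping block. The analytic step is short.
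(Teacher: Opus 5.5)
Your proof is correct, and part (2) takes a genuinely different route from the paper.

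\textbf{Part (2).} The paper first rotates the block to the end and then argues in the graph picture. By Lemma~\ref{lem-E-noncrossing}, which rests on the dichotomy of Lemma~\ref{lem-noncrossing-graph-dichotomy} and hence on $\alpha$ being non-crossing, only graphs in $\Delta^{(1)}(p;\alpha)$ contribute. Edge balance at the $\alpha$-vertex $a$ then forces $i^{(v)}=i^{(u-1)}$. The free indices $i^{(u)},\ldots,i^{(v-1)}$ are summed in coordinates, and an exchangeability average over $i^{(u-1)}$ finishes the computation.

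Your identity $\bE[\|\y\|^{2(d-1)}\y\y^*]=n^{-1}\bE[\|\y\|^{2d}]\,I_n$ does all of this at once:
\begin{itemize}
\item its $(i^{(u-1)},i^{(v)})$ entry is exactly the paper's sum of the $\y_a$-factor over the internal indices;
\item the vanishing of the off-diagonal entries says that terms with $i^{(v)}\ne i^{(u-1)}$ contribute nothing, and you check this directly from Assumption~\ref{Assumption-mixed-moments} and centering;
\item the diagonal computation is the paper's averaging step.
\end{itemize}
Your route has three advantages. It is self-contained, since none of the imported graph lemmas are needed. It needs no cyclic shift: if $u=1$ or $v=p$, one of $A,B$ is just the identity, so your appeal to cyclicity of the trace is unnecessary. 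And it shows that \eqref{eq-Sn-recursion} holds for every sequence, crossing or not, in which $a$ occupies a consecutive block. Non-crossingness is then used only in part (1), to guarantee that such a block exists and that the induction in Corollary~\ref{cor-Sn-exact} stays inside $\cC^{(1)}$. The paper's version stays inside its graph framework and makes the correspondence with $\Delta^{(1)}$-graphs explicit, but it depends on those lemmas.

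\textbf{Part (1).} Your cyclic minimal-arc argument is the same idea as the paper's. The paper avoids the wrap-around issue entirely by choosing $a$ to minimize the linear span $\max\{j:\alpha_j=a\}-\min\{j:\alpha_j=a\}$. Any other value appearing inside that span would, by non-crossingness, be confined strictly inside it and so have a smaller span.

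Your patch for a wrapping block is sound but should be stated precisely. Let $L$ be the linear interval complementary to the wrapping block, and consider the nonempty arcs (of any value) contained in $L$.
\begin{itemize}
\item If there are none, every value occurring in $L$ is already linearly consecutive.
\item Otherwise, take a minimal such arc. Your minimality argument shows that any value inside it is linearly consecutive.
\end{itemize}
Your claim that each value lies within a single arc of another value uses the fact that non-crossingness is invariant under cyclic rotation. This is true, but it deserves a line of justification.
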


\begin{proof}
We first prove part (1).
For each value appearing in $\alpha$, consider the difference between its largest and smallest positions of occurrence. Choose $a$ for which this difference is minimal, and set
\begin{align*}
    u=\min\{j\in[p]:\alpha_j=a\},
    \qquad
    v=\max\{j\in[p]:\alpha_j=a\}.
\end{align*}
If $u=v$, then \eqref{eq-alpha-consecutive-for-recursion} holds.

Suppose that $u<v$ and that $u<j<v$ for some $j$ with $\alpha_j \not= a$.
Write $b=\alpha_j$. Every occurrence of $b$ must lie strictly between $u$ and $v$. Indeed, an occurrence of $b$ at a position $j'<u$ would give
\begin{align*}
    j'<u<j<v,
    \qquad
    \alpha_{j'}=\alpha_j=b\not= a=\alpha_u=\alpha_v,
\end{align*}
whereas an occurrence of $b$ at a position $j'>v$ would give
\begin{align*}
    u<j<v<j',
    \qquad
    \alpha_u=\alpha_v=a\not= b=\alpha_j=\alpha_{j'}.
\end{align*}
Either case contradicts the non-crossing property. Hence, the difference between the largest and smallest positions at which $b$ occurs is strictly smaller than $v-u$, which contradicts the choice of $a$. This proves \eqref{eq-alpha-consecutive-for-recursion}.

We claim that deleting entries cannot create a crossing. Indeed, any four positions forming a crossing after the deletion already formed a crossing before the deletion.
Hence, the resulting sequence $\alpha'$ is also non-crossing, has $s-1$ distinct values, and has length $p-\deg_a(\alpha)$. The conclusion now follows from Lemma \ref{lem-noncrossing-characterization}.

We now prove part (2).
By the cyclic property of the trace, we obtain from \eqref{eq-def-Sn} that
\begin{align*}
    S_n((\alpha_1,\ldots,\alpha_p))
    =S_n((\alpha_{v+1},\ldots,\alpha_p,\alpha_1,\ldots,\alpha_v)).
\end{align*}
Note that cyclic shifts preserve the non-crossing property of a sequence. This shift places the chosen block \eqref{eq-alpha-consecutive-for-recursion} at the end. After the shift and canonical relabeling, we keep the same notation and assume that $2 \le u \le v = p$. The sequence left after deletion is a cyclic shift of $\alpha'$, up to canonical relabeling, so its value under $S_n$ is unchanged.

For a graph in $\Delta^{(1)}(p;\alpha)$, the down edges from the $\alpha$-vertex $a$ and the up edges to the $\alpha$-vertex $a$ are exactly
\begin{align} \label{eq-block-incident-edges}
    &a\longrightarrow i^{(t)},\qquad u\le t\le v, \nonumber\\
    &i^{(t)}\longrightarrow a,\qquad u-1\le t\le v-1.
\end{align}

For each $u\le t\le v-1$, the edges $a\to i^{(t)}$ and $i^{(t)}\to a$ give one edge in each direction. Subtracting these contributions from the equality $e_d(a,b)=e_u(a,b)$ for every $i$-vertex $b$ shows that the remaining edges $a\to i^{(v)}$ and $i^{(u-1)}\to a$ must have the same endpoint on the $i$-line. That is,
\begin{align} \label{eq-boundary-identification}
    i^{(v)}=i^{(u-1)}.
\end{align}
Note that \eqref{eq-boundary-identification} also holds whenever $u=v$.
We use the conventions that empty products equal one and sums over an empty tuple of indices have one term.
Subject to \eqref{eq-boundary-identification}, we delete the entries in positions $u,\ldots,v$ from both sequences $\alpha$ and $i$, together with the edges
\begin{align*}
    i^{(u-1)} \to \alpha_u, \alpha_u \to i^{(u)}, \ldots, \alpha_v \to i^{(v)}.
\end{align*}
The original edge $i^{(v)} \to \alpha_{v+1}$ is replaced by $i^{(u-1)} \to \alpha_{v+1}$. The remaining $\alpha$-vertices are unchanged, up to canonical relabeling. Deleting an occurrence in a sequence does not remove its graph vertex if its value also occurs at a retained position.
We provide an example of this deleting operation in Figure \ref{fig-deletion-consecutive-block} below.

\begin{figure}[htbp]
\centering
\begin{tikzpicture}[line width=0.65pt,>=latex,font=\small]
  \node[align=center] at (1.4,4.1)
    {$\alpha=(1,\underbrace{2,2}_{\text{delete}})$\\[6pt]
     $i=(1,\underbrace{2,1}_{\text{delete}})$};
  \draw[dotted] (-0.3,2) -- (3.1,2);
  \draw[dotted] (-0.3,0) -- (3.1,0);
  \node[circle,fill=black,inner sep=2pt] (a1) at (0,2) {};
  \node[circle,fill=black,inner sep=2pt] (a2) at (2.8,2) {};
  \node[circle,fill=black,inner sep=2pt] (b1) at (0,0) {};
  \node[circle,fill=black,inner sep=2pt] (b2) at (2.8,0) {};
  \node[above=4pt] at (a1) {$\alpha_1=1$};
  \node[above=4pt] at (a2) {$\alpha_2=\alpha_3=2$};
  \node[below=4pt] at (b1) {$i^{(1)}=i^{(3)}=1$};
  \node[below=4pt] at (b2) {$i^{(2)}=2$};
  \draw[->] (a1) to[bend left=13] (b1);
  \draw[->,dashed] (a2) to[bend left=13] (b2);
  \draw[->,dashed] (a2) to[bend left=10] (b1);
  \draw[->,dashed] (b1) to[bend left=10] (a2);
  \draw[->,dashed] (b2) to[bend left=13] (a2);
  \draw[->] (b1) to[bend left=13] (a1);
  \draw[->,line width=0.9pt] (4.1,1) -- (6.5,1);
  \node[align=center,above] at (5.3,1.13) {$a=2,\ u=2,\ v=3$\\delete positions $2,3$};
  \node[align=center,below] at (5.3,0.88) {$i^{(v)}=i^{(u-1)}=1$};
  \node[align=center] at (8.1,4.1) {$\alpha'=(1)$\\[6pt]$i'=(1)$};
  \draw[dotted] (7.1,2) -- (9.1,2);
  \draw[dotted] (7.1,0) -- (9.1,0);
  \node[circle,fill=black,inner sep=2pt] (c1) at (8.1,2) {};
  \node[circle,fill=black,inner sep=2pt] (d1) at (8.1,0) {};
  \node[above=4pt] at (c1) {$1$};
  \node[below=4pt] at (d1) {$1$};
  \draw[->] (c1) to[bend left=13] (d1);
  \draw[->] (d1) to[bend left=13] (c1);
\end{tikzpicture}
\caption{Deletion of a consecutive block with $a=2$, $u=2$, and $v=3$.}
\label{fig-deletion-consecutive-block}
\end{figure}

Conversely, start with an $i$ sequence for $\alpha'$ whose graph belongs to $\Delta^{(1)}(p-\deg_a(\alpha);\alpha')$. Choose $i^{(u)},\ldots,i^{(v-1)}$ arbitrarily in $[n]$, re-insert the deleted positions, and set $i^{(v)}=i^{(u-1)}$. The edges incident to the re-inserted $\alpha$-vertex $a$ are those in \eqref{eq-block-incident-edges}. Each down edge $a\to i^{(t)}$ and the corresponding up edge $i^{(t)}\to a$ have the same endpoints and opposite directions. The same holds for the remaining pair $a\to i^{(v)}$, $i^{(u-1)}\to a$. Hence $e_d(a,b)=e_u(a,b)$ for every $i$-vertex $b$. The edges incident to the remaining $\alpha$-vertices agree with those for $\alpha'$. This proves a bijection between the contributing index sequences for $\alpha$ and those for $\alpha'$, together with the unrestricted choices of the $i^{(u)},\ldots,i^{(v-1)}$.

Fix the indices outside the positions $u,\ldots,v$, including $i^{(u-1)}$.
For every $\deg_a(\alpha)\ge1$, the factors in \eqref{eq-def-E(i,alpha)} involving $\y_a$ are
\begin{align} \label{eq-block-y-factors}
    \left| \left( \y_a \right)_{i^{(u-1)}} \right|^2
    \prod_{t=u}^{v-1} \left| \left( \y_a \right)_{i^{(t)}} \right|^2.
\end{align}
Summing their expectation over the internal indices gives
\begin{align} \label{eq-internal-index-sum}
    \sum_{i^{(u)},\ldots,i^{(v-1)}=1}^n
    \bE\left[ \left| \left( \y_a \right)_{i^{(u-1)}} \right|^2 \prod_{t=u}^{v-1} \left| \left( \y_a \right)_{i^{(t)}} \right|^2 \right]
    = \bE\left[ \left| \left( \y_a \right)_{i^{(u-1)}}\right|^2 \left\| \y_a \right\|^{2(\deg_a(\alpha)-1)} \right].
\end{align}
As the entries of $\y_a$ are i.i.d., the last expectation in \eqref{eq-internal-index-sum} has the same value for every possible value of $i^{(u-1)}$, so averaging over this value gives
\begin{align*}
    &\sum_{i^{(u)},\ldots,i^{(v-1)}=1}^n
    \bE\left[ \left| \left( \y_a \right)_{i^{(u-1)}} \right|^2 \prod_{t=u}^{v-1} \left| \left( \y_a \right)_{i^{(t)}} \right|^2 \right] \\
    &= \dfrac{1}{n} \sum_{i^{(u-1)}=1}^n \bE\left[ \left| \left( \y_a \right)_{i^{(u-1)}}\right|^2 \left\| \y_a \right\|^{2(\deg_a(\alpha)-1)} \right]
    = \dfrac{1}{n} \bE \left[ \|\y\|^{2\deg_a(\alpha)} \right].
\end{align*}

By independence, equivalence, \eqref{eq-Sn-canonical-sum}, \eqref{eq-def-E(i,alpha)} and \eqref{eq-boundary-identification}, we obtain
\begin{align*}
    &S_n(\alpha)
    = \sum_{i^{(1)},\ldots,i^{(p)}=1}^n \bE\left[ \prod_{t=1}^p \overline{\left( \y_{\alpha_t} \right)_{i^{(t)}}} \left( \y_{\alpha_{t+1}} \right)_{i^{(t)}} \right] \\
    =& \sum_{\substack{i^{(1)},\ldots,i^{(u-1)},\\ i^{(v)},\ldots,i^{(p)}=1\\ i^{(v)}=i^{(u-1)}}}^n \Bigg( \bE\left[ \prod_{t\in\{1,\ldots,u-2\}\cup\{v+1,\ldots,p\}} \left( \overline{\left( \y_{\alpha_t} \right)_{i^{(t)}}} \left( \y_{\alpha_{t+1}} \right)_{i^{(t)}} \right) \overline{\left( \y_{\alpha_{u-1}} \right)_{i^{(u-1)}}} \left( \y_{\alpha_{v+1}} \right)_{i^{(v)}} \right] \\
    & \times \sum_{i^{(u)},\ldots,i^{(v-1)}=1}^n \bE\left[ \left| \left( \y_a \right)_{i^{(u-1)}} \right|^2 \prod_{t=u}^{v-1} \left| \left( \y_a \right)_{i^{(t)}} \right|^2 \right] \Bigg) \\
    =& \sum_{\substack{i^{(1)},\ldots,i^{(u-1)},\\ i^{(v+1)},\ldots,i^{(p)}=1}}^n \Bigg( \bE \left[ \prod_{t\in\{1,\ldots,u-2\}\cup\{v+1,\ldots,p\}} \left( \overline{\left( \y_{\alpha_t} \right)_{i^{(t)}}} \left( \y_{\alpha_{t+1}} \right)_{i^{(t)}} \right) \overline{\left( \y_{\alpha_{u-1}} \right)_{i^{(u-1)}}} \left( \y_{\alpha_{v+1}} \right)_{i^{(u-1)}} \right] \\
    & \times \sum_{i^{(u)},\ldots,i^{(v-1)}=1}^n \bE\left[ \left| \left( \y_a \right)_{i^{(u-1)}} \right|^2 \prod_{t=u}^{v-1} \left| \left( \y_a \right)_{i^{(t)}} \right|^2 \right] \Bigg) \\
    =& \dfrac{1}{n} S_n(\alpha') \bE \left[ \|\y\|^{2\deg_a(\alpha)} \right],
\end{align*}
This proves \eqref{eq-Sn-recursion}.
\end{proof}

\begin{corollary} \label{cor-Sn-exact}
Under Assumptions \ref{Assumption-distribution} and \ref{Assumption-mixed-moments}, for every $\alpha\in\cC_{s,p}^{(1)}$,
\begin{align} \label{eq-Sn-exact}
    S_n(\alpha)
    = n^{1-s} \prod_{a=1}^s \bE \left[ \|\y\|^{2\deg_a(\alpha)} \right].
\end{align}
\end{corollary}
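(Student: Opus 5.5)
We prove \eqref{eq-Sn-exact} by induction on $s$, using Proposition \ref{prop-Sn-recursion} to peel off one value of $\alpha$ at a time. The claim is for all $p$ simultaneously, so we induct on $s$ with $p$ arbitrary.

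\emph{Base case $s=1$.} Here $\alpha=(1,\ldots,1)$ with $\deg_1(\alpha)=p$. Directly from \eqref{eq-def-Sn},
\begin{align*}
    S_n(\alpha)=\bE\left[\Tr\left((\y\y^*)^p\right)\right]=\bE\left[\|\y\|^{2p}\right],
\end{align*}
since $\y\y^*$ has rank one with nonzero eigenvalue $\|\y\|^2$. This is \eqref{eq-Sn-exact} with $n^{1-s}=1$.

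\emph{Inductive step, $2\le s\le p$.} Take $\alpha\in\cC_{s,p}^{(1)}$. Part (1) of Proposition \ref{prop-Sn-recursion} supplies a value $a$ whose occurrences form the consecutive block $\{u,\ldots,v\}$. Deleting this block and canonically relabeling gives $\alpha'\in\cC_{s-1,p-\deg_a(\alpha)}^{(1)}$. Note that $p-\deg_a(\alpha)\ge s-1\ge1$, so $\alpha'$ is a genuine nonempty sequence.

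Part (2) then gives $S_n(\alpha)=n^{-1}\bE[\|\y\|^{2\deg_a(\alpha)}]\,S_n(\alpha')$. The inductive hypothesis applied to $\alpha'$ gives
\begin{align*}
    S_n(\alpha')=n^{2-s}\prod_{b}\bE\left[\|\y\|^{2\deg_b(\alpha')}\right],
\end{align*}
where $b$ runs over the $s-1$ values of $\alpha'$.

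The only bookkeeping point is the correspondence of degrees under deletion and relabeling. Deletion removes exactly the occurrences of $a$ and leaves every other occurrence intact. Canonical relabeling is a bijection from the values $[s]\setminus\{a\}$ of the truncated sequence onto $[s-1]$, and it preserves occurrence counts. Hence the multiset $\{\deg_b(\alpha')\}$ equals $\{\deg_c(\alpha): c\in[s],\,c\ne a\}$.

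Combining, we obtain $n^{-1}\cdot n^{2-s}=n^{1-s}$ multiplied by the full product over $a\in[s]$, which is \eqref{eq-Sn-exact}. There is no real obstacle here; the substance lies entirely in Proposition \ref{prop-Sn-recursion}.
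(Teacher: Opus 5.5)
Your proposal is correct and follows the paper's proof essentially verbatim: induction on $s$ simultaneously over all lengths, the base case $\Tr\bigl((\y\y^*)^p\bigr)=\|\y\|^{2p}$, and the inductive step via Proposition \ref{prop-Sn-recursion} together with the degree correspondence under deletion and relabeling. Your extra checks, that $p-\deg_a(\alpha)\ge s-1\ge 1$ and that relabeling preserves occurrence counts, are harmless refinements of the same argument.
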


\begin{proof}
We prove the identity by induction on the number $s$ of distinct values in $\alpha$, simultaneously for all sequence lengths.

If $s=1$, then $\alpha=(1,\ldots,1)$ so $\deg_1(\alpha) = p$. We have
\begin{align*}
    S_n(\alpha)
    = \bE \left[ \Tr \left( \left[ \y_1 \left( \y_1 \right)^* \right]^p \right) \right]
    = \bE \left[ \|\y\|^{2p} \right].
\end{align*}
This is \eqref{eq-Sn-exact} for $s=1$.

Now let $s\ge2$ and assume that \eqref{eq-Sn-exact} holds for every non-crossing canonical sequence with $s-1$ distinct values, regardless of its length. By part (1) of Proposition \ref{prop-Sn-recursion}, choose $a,u,v$ satisfying \eqref{eq-alpha-consecutive-for-recursion}, and let $\alpha'$ be the sequence obtained after deletion and canonical relabeling. Then
\begin{align*}
    \alpha' \in \cC_{s-1,p-\deg_a(\alpha)}^{(1)}.
\end{align*}
Part (2) of Proposition \ref{prop-Sn-recursion} and the induction hypothesis give
\begin{align*}
    S_n(\alpha)
    = \frac{ \bE \left[ \|\y\|^{2\deg_a(\alpha)} \right]}{n} S_n(\alpha')
    = \frac{ \bE \left[ \|\y\|^{2\deg_a(\alpha)} \right]}{n} n^{1-(s-1)}
    \prod_{b=1}^{s-1} \bE \left[ \|\y\|^{2\deg_b(\alpha')} \right].
\end{align*}
By the definition of $\alpha'$,
\begin{align*}
    \prod_{b=1}^{s-1} \bE \left[ \|\y\|^{2\deg_b(\alpha')} \right]
    = \prod_{\substack{b=1\\b\ne a}}^s \bE \left[ \|\y\|^{2\deg_b(\alpha)} \right].
\end{align*}
Consequently,
\begin{align*}
    S_n(\alpha)
    = n^{1-s}\prod_{b=1}^s \bE \left[ \|\y\|^{2\deg_b(\alpha)} \right].
\end{align*}
The proof is concluded.
\end{proof}

Next, we verify that the norm moments appearing in \eqref{eq-Sn-exact} are exactly the quantities $H_n(d)$ defined in \eqref{eq-H-explicit}, and establish the recurrence \eqref{eq-H-degree-recursion} for $H_n(d)$.

\begin{lemma} \label{lem-norm-moment-polynomial}
Under Assumption \ref{Assumption-distribution}, for integers $n\ge1$ and $d\ge0$,
\begin{align*}
    H_n(d)= \bE \left[ \|\y\|^{2d} \right].
\end{align*}
For every fixed $d\in\bN_+$, $H_n(d)$ is a polynomial in $n$ of degree $d$ with leading coefficient one.
Moreover, the recurrence \eqref{eq-H-degree-recursion} holds with the initial values stated in Remark \ref{rmk-H-degree-recursion}.
\end{lemma}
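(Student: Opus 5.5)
The plan has three parts: an expansion of the norm moment into set partitions, a degree count, and a recurrence obtained by singling out one index.

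\textbf{Expansion.} I would write $\|\y\|^{2d}=\left(\sum_{j=1}^n |\xi_j|^2\right)^d=\sum_{j_1,\ldots,j_d=1}^n \prod_{t=1}^d |\xi_{j_t}|^2$. Each tuple $(j_1,\ldots,j_d)\in[n]^d$ induces a set partition $\pi$ of $[d]$, with $t,t'$ in the same block iff $j_t=j_{t'}$. Conversely, each $\pi\in\cP(d)$ arises from exactly $(n)_{|\pi|}$ tuples, namely the injective assignments of distinct values in $[n]$ to the blocks. For a tuple inducing $\pi$, independence and identical distribution give
\begin{align*}
\bE\Big[\prod_t |\xi_{j_t}|^2\Big]=\prod_{B\in\pi}\bE\big[|\xi_1|^{2|B|}\big]=\prod_{B\in\pi} m_{2|B|},
\end{align*}
and Assumption \ref{Assumption-distribution} makes every such moment finite. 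Summing over $\pi$ gives $\bE\|\y\|^{2d}=H_n(d)$. The case $d=0$ is the convention $\cP(0)=\{\varnothing\}$, which gives $1$.

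\textbf{Degree.} For fixed $d$, each $(n)_{|\pi|}$ is a polynomial in $n$ of degree $|\pi|\le d$. Equality $|\pi|=d$ holds only for the partition into singletons, whose weight is $m_2^d=1$ by unit variance. Hence $H_n(d)$ has degree $d$ with leading coefficient one.

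\textbf{Recurrence.} I would decompose according to the block $B\ni d$ containing the last element. Let $\ell=|B|$ with $1\le\ell\le d$. There are $\binom{d-1}{\ell-1}$ choices for the rest of $B$, and the remainder $\pi'$ is an arbitrary partition of the remaining $d-\ell$ elements. Since $|\pi|=|\pi'|+1$, we have $(n)_{|\pi|}=(n)_{|\pi'|}(n-|\pi'|)$. This yields
\begin{align*}
H_n(d)=\sum_{\ell=1}^d\binom{d-1}{\ell-1}m_{2\ell}\sum_{\pi'\in\cP(d-\ell)}(n-|\pi'|)(n)_{|\pi'|}\prod_{B'\in\pi'}m_{2|B'|}.
\end{align*}
The term with factor $n$ produces $n\binom{d-1}{\ell-1}m_{2\ell}H_n(d-\ell)$. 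The main obstacle is the correction term involving $|\pi'|$, which is not a single $H_n$ value.

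To handle the correction term, I would use a different counting rather than this block decomposition. Write $\bE\|\y\|^{2d}=\sum_j\bE\big[|\xi_j|^2\|\y\|^{2(d-1)}\big]=n\,\bE\big[|\xi_1|^2\|\y\|^{2(d-1)}\big]$. Then expand $\|\y\|^{2(d-1)}=(|\xi_1|^2+R)^{d-1}$ with $R=\sum_{j\ge2}|\xi_j|^2$, which is independent of $\xi_1$. This gives
\begin{align*}
H_n(d)=n\sum_{\ell=1}^{d}\binom{d-1}{\ell-1}m_{2\ell}\,\bE\big[R^{d-\ell}\big].
\end{align*}
The term $\bE[R^{d-\ell}]$ equals $H_{n-1}(d-\ell)$, not $H_n(d-\ell)$. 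To close the recurrence, I would use the binomial identity
\begin{align*}
H_n(q)=\bE\big[(|\xi_1|^2+R)^q\big]=\sum_{\ell}\binom{q}{\ell}m_{2\ell}\,\bE\big[R^{q-\ell}\big].
\end{align*}
Applying it with $q=d-1$, the target identity $\sum_\ell\binom{d-1}{\ell}m_{2\ell}H_n(d-\ell)=(n-1)\sum_\ell\binom{d-1}{\ell-1}m_{2\ell}H_{n-1}(d-\ell)$ would be verified by induction on $d$.

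I expect this $n$ versus $n-1$ bookkeeping to be the delicate step. A cleaner route would be generating functions: with $F(z)=\sum_{\ell}m_{2\ell}z^\ell/\ell!$, the exponential generating function of $H_n(d)$ is $G=F^n$. The recurrence is then equivalent to $G'F=nF'G$, after matching the $\binom{d-1}{\ell-1}$ and $\binom{d-1}{\ell}$ coefficients. This identity follows from $G'=nF^{n-1}F'$.

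Finally, $H_n(1)=nm_2=n$ follows directly from the recurrence with $d=1$.
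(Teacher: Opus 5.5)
Your set-partition expansion, the degree count, and the generating-function derivation of the recurrence are exactly the paper's proof; that derivation uses the exponential generating function $F^n$, the identity $F\,(F^n)'=nF'F^n$, and comparison of coefficients of $z^{d-1}$ after multiplying by $(d-1)!$. That route is complete once you record why $G_n(z):=\sum_{d\ge0}H_n(d)z^d/d!=F(z)^n$, which follows either from the multinomial expansion of $(\sum_b|\xi_b|^2)^d$ as in the paper, or by iterating in $n$ your own identity $H_n(q)=\sum_{\ell=0}^{q}\binom{q}{\ell}m_{2\ell}H_{n-1}(q-\ell)$, which says $G_n=FG_{n-1}$ with $G_1=F$.

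You should, however, drop the $n$-versus-$(n-1)$ detour, because the target identity you state there is false. At $d=1$ it reads $n=n-1$ or $0=n-1$, depending on whether $\ell=0$ is included. The identity that route actually needs is $\sum_{\ell=1}^{d-1}\binom{d-1}{\ell}m_{2\ell}H_n(d-\ell)=n\sum_{\ell=1}^{d}\binom{d-1}{\ell-1}m_{2\ell}\bigl[H_n(d-\ell)-H_{n-1}(d-\ell)\bigr]$, which is just $(F-1)(F^n)'=nF'(F^n-F^{n-1})$ read coefficientwise.
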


\begin{proof}
The case $d=0$ is trivial and we only consider $d \ge 1$ below. Recall that $\y=(\xi_1,\ldots,\xi_n)$. For $n,d \ge 1$,
\begin{align*}
    \bE \left[ \|\y\|^{2d} \right]
    = \bE \left[ \left( |\xi_1|^2 + \ldots + |\xi_n|^2 \right)^d \right]
    = \sum_{b_1,\ldots,b_d=1}^n \bE\left[ \prod_{j=1}^d |\xi_{b_j}|^2 \right].
\end{align*}
We group the terms according to the equality relation among $b_1,\ldots,b_d$.
For $\pi \in \cP(d)$, the indices agree within each block and are distinct across different blocks. There are $(n)_{|\pi|}$ ways to assign distinct indices to the blocks. By independence, each assignment has an expectation of $\prod_{B\in\pi}m_{2|B|}$. Consequently, by \eqref{eq-H-explicit},
\begin{align*}
    \bE \left[ \|\y\|^{2d} \right]
    = \sum_{\pi\in\cP(d)} (n)_{|\pi|} \prod_{B\in\pi} m_{2|B|}
    = H_n(d).
\end{align*}

For fixed $d \ge 1$, each term of \eqref{eq-H-explicit} is a polynomial in $n$ of degree $|\pi|$. The partition into $d$ singletons is the unique partition with $|\pi|=d$. Its contribution is $(n)_d$ because $m_2=1$, whereas every other partition has at most $d-1$ blocks.
Hence, the degree is $d$ and the leading coefficient is one.

To establish the recurrence, consider the formal power series
\begin{align*}
    A(z) = \sum_{\ell=0}^{\infty} m_{2\ell} \frac{z^\ell}{\ell!},
\end{align*}
where $z$ is a formal variable.
For $n \ge 1$ and $j \ge 0$, we have
\begin{align*}
    H_n(j) = \bE \left[ \left( \sum_{b=1}^n |\xi_b|^2 \right)^j \right]
    = \sum_{\substack{\ell_1, \ldots, \ell_n \ge 0 \\ \ell_1 + \cdots + \ell_n = j}} \frac{j!}{\ell_1! \cdots \ell_n!} \prod_{b=1}^n m_{2\ell_b}.
\end{align*}
Thus, we obtain
\begin{align*}
    A(z)^n = \sum_{j=0}^{\infty} \left( \sum_{\substack{\ell_1, \ldots, \ell_n \ge 0 \\ \ell_1 + \cdots + \ell_n = j}} \prod_{b=1}^n \frac{m_{2\ell_b}}{\ell_b!} \right) z^j
    = \sum_{j=0}^{\infty} H_n(j) \frac{z^j}{j!},
    \qquad \forall n \ge 1.
\end{align*}

Formal differentiation gives
\begin{align*}
    A(z)\frac{d}{dz}\bigl(A(z)^n\bigr)
    =nA'(z)A(z)^n.
\end{align*}
For $d\ge1$, comparing coefficients of $z^{d-1}$ and multiplying by $(d-1)!$ yields
\begin{align*}
    \sum_{\ell=0}^{d-1}\binom{d-1}{\ell}
    m_{2\ell}H_n(d-\ell)
    =n\sum_{\ell=1}^{d}\binom{d-1}{\ell-1}
    m_{2\ell}H_n(d-\ell).
\end{align*}
Isolating the $\ell=0$ term on the left and using $m_0=1$ and $\binom{d-1}{d}=0$ gives \eqref{eq-H-degree-recursion}.
\end{proof}

\bigskip
\noindent \textbf{Step 2.} In this step, we focus on the case that $\alpha$ is crossing.
For crossing sequences under Assumptions \ref{Assumption-distribution} and \ref{Assumption-mixed-moments}, we compute $E(i,\alpha)$.

We first start with the following lemma, which provides a bound on the number of vertices of $g(i,\alpha)$, and which can be found in \cite{Yuan2024}.
\begin{lemma}(\cite{Yuan2024}*{Proposition 2.2}) \label{lem-yuan-vertex-bound}
For $1 \le r,s \le p$, let $\alpha \in \cC_{s,p}$ and $i \in \cC_{r,p}$.
\begin{enumerate}[(1)]
    \item If $g(i,\alpha) \in \Delta^{(1)}(p;\alpha)$, then $r+s \le p+1$. Equality holds if and only if $g(i,\alpha) \in \Delta^{(1)}_0(p;\alpha)$.
    \item If $g(i,\alpha) \in \Delta^{(2)}(p;\alpha)$ and
    \begin{align*}
        |e_u(a,b)-e_d(a,b)| \not= 1 \qquad \text{for every } a \in [s],b\in[r],
    \end{align*}
    then $r+s \le p$.
\end{enumerate}
\end{lemma}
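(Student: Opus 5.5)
Lemma \ref{lem-yuan-vertex-bound} is a counting statement about the connected directed graph $g(i,\alpha)$, which has $r+s$ vertices and $2p$ directed edges. I would bound the vertex count through the number of distinct undirected edges of $g^{simp}$, or more precisely of the underlying undirected simple graph $G$, using that $G$ is connected. Connectivity gives $r+s\le \#E(G)+1$, so everything reduces to bounding $\#E(G)$.

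\emph{Part (1).} If $g(i,\alpha)\in\Delta^{(1)}(p;\alpha)$, then $g^{red}(i,\alpha)$ is connected with exactly $p$ edges by Remark \ref{Rmk-red-simp-properties}. Hence $r+s\le p+1$. Equality holds if and only if $g^{red}$ is a tree. A tree has no multiple edges, so then $g^{red}=g^{simp}$ is acyclic, which is exactly the definition of $\Delta^{(1)}_0(p;\alpha)$. Conversely, every graph in $\Delta^{(1)}_0(p;\alpha)$ has $p+1$ vertices, as already noted after the definition. This part is routine.

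\emph{Part (2).} For each unordered pair $\{a,b\}$ joined in $G$, let the total multiplicity be $e_u(a,b)+e_d(a,b)$. These sum to $2p$ over all pairs. I would show that every pair has total multiplicity at least $2$, and that at least one pair has multiplicity at least $3$. Together these give
\begin{align*}
2p \ge 2\,\#E(G)+1, \qquad\text{so}\qquad \#E(G)\le p-1 \quad\text{and}\quad r+s\le p.
\end{align*}

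The two facts would be established as follows.

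\emph{Multiplicity at least $2$.} If a pair had multiplicity $1$, then $\{e_u,e_d\}=\{0,1\}$, giving $|e_u-e_d|=1$, which the hypothesis excludes.

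\emph{Some pair with multiplicity at least $3$.} Since $g\in\Delta^{(2)}$, some pair has $e_u(a,b)\ne e_d(a,b)$. The hypothesis excludes a difference of $1$, so $|e_u-e_d|\ge 2$, and the multiplicity of this pair is at least $2$. If it were exactly $2$, the pair would be $\{2,0\}$, meaning two parallel edges in the same direction. I would then need a parity or balance argument to find another pair with odd multiplicity, which must then be at least $3$.

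The balance argument uses the fact that $g$ is a closed walk alternating between down and up edges. At each vertex, in-degree equals out-degree. At the $\alpha$-vertex $a$ this gives $\sum_b e_d(a,b)=\sum_b e_u(a,b)$, so the signed excesses $e_u-e_d$ over neighbors $b$ sum to zero.

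The main obstacle is the configuration in which every pair has multiplicity exactly $2$ but some pairs are of type $(2,0)$ or $(0,2)$. Then $2p=2\,\#E(G)$ and the counting gives only $r+s\le p+1$. To rule out equality I would use the structure of the walk. Equality forces $G$ to be a tree in which each edge is traversed exactly twice. In a tree, a closed walk must traverse each edge once in each direction, because crossing a cut edge requires returning across it. Thus every pair is of type $(1,1)$, so $e_u=e_d$ everywhere, contradicting $g\in\Delta^{(2)}$.

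Hence either some multiplicity is at least $3$, or $G$ is not a tree. In both cases $r+s\le p$, with the second case following from $r+s\le \#E(G)\le p$. This tree/cut-edge argument is the step I expect to require the most care.
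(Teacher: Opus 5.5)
Your argument is correct. The paper gives no proof of this lemma; it quotes it from Proposition 2.2 of Yuan (2024). What you wrote is the standard skeleton-counting argument for such statements:
\begin{itemize}
\item in part (1), $g^{red}(i,\alpha)$ is a connected multigraph with $p$ edges;
\item in part (2), excluding unit imbalances rules out pairs of total multiplicity one, so the simple skeleton $G$ has at most $p$ edges;
\item the bridge argument then shows that $G$ cannot be a tree when $g(i,\alpha)\in\Delta^{(2)}(p;\alpha)$.
\end{itemize}

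One correction to your plan. The claim you announce at the start of part (2), that some pair must have total multiplicity at least $3$, is false in general. Take $\alpha=i=(1,2,1,2)$. All four pairs have $(e_u,e_d)\in\{(2,0),(0,2)\}$, so every multiplicity is $2$ and no imbalance equals $1$. The graph lies in $\Delta^{(2)}(4;\alpha)$, and $r+s=4=p$.

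So drop that claim and the parity/balance detour. The bridge argument you give at the end already suffices, with no case split and for arbitrary multiplicities. On a tree, a closed walk crosses each edge $\{a,b\}$ equally often in the two directions. Traversals from the $\alpha$-vertex $a$ to the $i$-vertex $b$ are exactly the down edges $a\to b$, and traversals back are exactly the up edges $b\to a$. Hence $e_u(a,b)=e_d(a,b)$ on every edge whenever the skeleton is a tree. It follows that in $\Delta^{(2)}(p;\alpha)$ the skeleton contains a cycle, and $r+s\le \#E(G)\le p$ directly.
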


\begin{corollary} \label{lem-nonzero-vertex-bound}
Under Assumptions \ref{Assumption-distribution} and \ref{Assumption-mixed-moments}, let $\alpha \in \cC_{s,p}$ and $i \in \cC_{r,p}$. If $E(i,\alpha) \not= 0$, then
\begin{align} \label{eq-general-vertex-bound}
    r+s\le p+1.
\end{align}
If equality holds, then $g(i,\alpha) \in \Delta^{(1)}_0(p;\alpha)$.
Consequently, if $\alpha$ is crossing and $E(i,\alpha)\ne0$, then
\begin{align} \label{eq-crossing-vertex-bound}
    r+s\le p.
\end{align}
\end{corollary}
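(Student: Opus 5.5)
The plan is a case analysis on whether $g(i,\alpha)$ lies in $\Delta^{(1)}(p;\alpha)$, with Lemma \ref{lem-yuan-vertex-bound} doing the combinatorial work. Throughout, suppose $E(i,\alpha)\ne0$.

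\emph{Case $g(i,\alpha)\in\Delta^{(1)}(p;\alpha)$.} Part (1) of Lemma \ref{lem-yuan-vertex-bound} applies directly. It gives $r+s\le p+1$, and equality forces $g(i,\alpha)\in\Delta^{(1)}_0(p;\alpha)$.

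\emph{Case $g(i,\alpha)\in\Delta^{(2)}(p;\alpha)$.} Here I want to verify the extra hypothesis of part (2) of Lemma \ref{lem-yuan-vertex-bound}, namely $|e_u(a,b)-e_d(a,b)|\ne1$ for all $a\in[s]$, $b\in[r]$. I argue by contradiction as in the proof of Lemma \ref{lem-E-noncrossing}. Suppose some pair $(a,b)$ has $|e_u(a,b)-e_d(a,b)|=1$.
\begin{enumerate}[(i)]
\item If both numbers are positive, then the factor $\bE[\overline{\xi_1}^{e_d(a,b)}\xi_1^{e_u(a,b)}]$ in \eqref{eq-E-factorization} vanishes by Assumption \ref{Assumption-mixed-moments}.
\item If one of them is zero, the other equals one, and the factor is $\bE[\xi_1]$ or $\bE[\overline{\xi_1}]$, which vanishes because the entries are centered.
\end{enumerate}
Either way $E(i,\alpha)=0$ by the product formula \eqref{eq-E-factorization}, a contradiction. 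So part (2) applies and yields $r+s\le p$, which is strictly below $p+1$.

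Combining the two cases proves \eqref{eq-general-vertex-bound}. It also shows that equality can only occur in the first case, and then only within $\Delta^{(1)}_0(p;\alpha)$.

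\emph{Crossing $\alpha$.} Suppose $\alpha$ is crossing, $E(i,\alpha)\ne0$, and $r+s=p+1$. By the above, $g(i,\alpha)\in\Delta^{(1)}_0(p;\alpha)$. Hence some $i$ in $\cC_{p+1-s,p}$ has its graph in $\Delta^{(1)}_0(p;\alpha)$, which by definition (Lemma \ref{lem-Bai}) means $\alpha\in\cC_{s,p}^{(1)}$. Lemma \ref{lem-noncrossing-characterization} then says $\alpha$ is non-crossing, a contradiction. Therefore $r+s\le p$, which is \eqref{eq-crossing-vertex-bound}.

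\emph{Main obstacle.} The only delicate point is the bookkeeping in the $\Delta^{(2)}$ case. One must check that the vanishing argument covers the situation where one of $e_u(a,b)$, $e_d(a,b)$ is zero, which needs centering rather than Assumption \ref{Assumption-mixed-moments}. One must also keep the convention that $e_u(a,b)=e_d(a,b)=0$ for absent vertices, so that these pairs never produce spurious imbalances.
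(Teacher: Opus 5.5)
Your proposal is correct and takes the same route as the paper. You rule out unit imbalances using Assumption \ref{Assumption-mixed-moments} and centering, apply the two parts of Lemma \ref{lem-yuan-vertex-bound}, and exclude equality for crossing $\alpha$ via the definition of $\cC_{s,p}^{(1)}$ and Lemma \ref{lem-noncrossing-characterization}; the only cosmetic difference is that you split into the $\Delta^{(1)}$ and $\Delta^{(2)}$ cases before checking the imbalance condition, whereas the paper checks it first.
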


\begin{proof}
If $|e_u(a,b)-e_d(a,b)|=1$ for some pair $(a,b)$, then we have $\bE[\overline{\xi_1}^{e_d(a,b)} \xi_1^{e_u(a,b)}] = 0$, which implies that $E(i,\alpha)=0$ by \eqref{eq-E-factorization}. Indeed, when both $e_u(a,b)$ and $e_d(a,b)$ are positive, this follows from Assumption \ref{Assumption-mixed-moments}. When one of $e_u(a,b)$ and $e_d(a,b)$ is zero, the other is one. As $\xi_1$ is centered, we have the same conclusion.
Thus, $E(i,\alpha) \not= 0$ implies that no such pair exists.

The two parts of Lemma \ref{lem-yuan-vertex-bound} give \eqref{eq-general-vertex-bound} and the condition under which the equality holds.
If $\alpha$ is crossing, Lemma \ref{lem-noncrossing-characterization} excludes a graph in $\Delta^{(1)}_0(p;\alpha)$, so equality cannot hold. This proves \eqref{eq-crossing-vertex-bound}.
\end{proof}

\begin{remark}
Under Assumptions \ref{Assumption-distribution} and \ref{Assumption-distribution-prime}, \eqref{eq-E-factorization} gives $E(i,\alpha) \not= 0$ only if $e_u(a,b)=e_d(a,b)$ for every pair $(a,b)$. Hence $g(i,\alpha) \in \Delta^{(1)}(p;\alpha)$, and only part (1) of Lemma \ref{lem-yuan-vertex-bound} is needed in the proof of Corollary \ref{lem-nonzero-vertex-bound}. The crossing case follows in the same way.
\end{remark}

\begin{corollary} \label{cor-crossing-Sn-bound}
Under Assumptions \ref{Assumption-distribution} and \ref{Assumption-mixed-moments}, for fixed $p$, there is a finite constant $C_p$, which is independent of $n,k,m$, and $\alpha$, and which depends only on $p$ and $m_1,\ldots,m_{2p}$, such that, for every crossing $\alpha \in \cC_{s,p}$,
\begin{align} \label{eq-crossing-Sn-bound}
    |S_n(\alpha)|\le C_p n^{p-s}.
\end{align}
\end{corollary}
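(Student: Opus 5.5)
We would start from the canonical expansion \eqref{eq-Sn-canonical-sum}, which writes
\begin{align*}
    S_n(\alpha) = \sum_{r=1}^{p} (n)_r \sum_{i\in\cC_{r,p}} E(i,\alpha).
\end{align*}
The plan is to bound the number of canonical sequences, bound each $E(i,\alpha)$ uniformly, and then use Corollary \ref{lem-nonzero-vertex-bound} to restrict the range of $r$ in which nonzero terms occur.

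First, the number of canonical sequences in $\cC_{r,p}$ is at most the number of set partitions of $[p]$, namely the Bell number $B_p$. This depends only on $p$.

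Second, each term $E(i,\alpha)$ has to be bounded uniformly. By \eqref{eq-E-factorization}, $E(i,\alpha)$ is a product over pairs $(a,b)$ of factors $\bE[\overline{\xi_1}^{e_d(a,b)}\xi_1^{e_u(a,b)}]$. Each such factor is bounded in modulus by $m_{e_d(a,b)+e_u(a,b)}$. The total number of edges is $2p$, so the exponents sum to $2p$. By Hölder's (or Lyapunov's) inequality, $m_j \le m_{2p}^{j/(2p)}$ for $1\le j\le 2p$, and $m_{2p}\ge m_2 = 1$. Hence
\begin{align*}
    |E(i,\alpha)| \le \prod_{(a,b)} m_{2p}^{(e_d(a,b)+e_u(a,b))/(2p)} = m_{2p}.
\end{align*}
Alternatively, one can apply Hölder directly to the product of the $2p$ entries appearing in \eqref{eq-def-E(i,alpha)}, which gives $|E(i,\alpha)|\le m_{2p}$ as well. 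Either way, the bound is independent of $n$, $\alpha$ and $i$.

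Third, since $\alpha$ is crossing, Corollary \ref{lem-nonzero-vertex-bound}, via \eqref{eq-crossing-vertex-bound}, gives $E(i,\alpha)=0$ unless $r\le p-s$. In the remaining terms we use $(n)_r\le n^r\le n^{p-s}$ for $n\ge1$. Summing over $1\le r\le p-s$ then gives
\begin{align*}
    |S_n(\alpha)| \le p\, B_p\, m_{2p}\, n^{p-s}.
\end{align*}
So we may take $C_p = p B_p m_{2p}$. For $s=p$ no sequence can be crossing, because a crossing requires some value to repeat, so that case is vacuous; for the same reason the range $r\le p-s$ is never empty in the cases that occur. The constant depends only on $p$ and $m_{2p}$, and in particular on $m_1,\ldots,m_{2p}$.

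None of these steps is a real obstacle, since the combinatorial content has already been established in Corollary \ref{lem-nonzero-vertex-bound}. The only point needing care is the uniform moment bound on $E(i,\alpha)$, which is settled by the Hölder argument above.
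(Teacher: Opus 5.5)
Your proposal is correct and follows the paper's proof. You expand via \eqref{eq-Sn-canonical-sum}, use Corollary \ref{lem-nonzero-vertex-bound} to restrict to $r\le p-s$, bound each $|E(i,\alpha)|$ uniformly by moments of order at most $2p$, and combine the $p$-dependent count of canonical sequences with $(n)_r\le n^{p-s}$. The only difference is cosmetic: your Lyapunov/H\"older step gives the explicit bound $|E(i,\alpha)|\le m_{2p}$, hence $C_p=pB_pm_{2p}$, where the paper only notes that the bound depends on $p$ and $m_1,\ldots,m_{2p}$.
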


\begin{proof}
We consider the equation \eqref{eq-Sn-canonical-sum}. Corollary \ref{lem-nonzero-vertex-bound} shows that a nonzero term must have $r\le p-s$.
For every pair $(a,b)$, we have
\begin{align*}
    \left| \bE \left[ \overline{\left( \y_a \right)_b}^{\,e_d(a,b)} \left( \y_a \right)_b^{e_u(a,b)} \right] \right|
    \le m_{e_d(a,b)+e_u(a,b)}.
\end{align*}
The total number of edges in the graph $g(i,\alpha)$ is $2p$. There are at most $2p$ pairs for which $e_d(a,b)+e_u(a,b)>0$. For such pairs, it holds that $e_d(a,b)+e_u(a,b) \le 2p$.
Thus, by \eqref{eq-E-factorization}, we can bound $|E(i,\alpha)|$ by a constant $C_p$ depending only on $p$ and $m_1,\ldots,m_{2p}$.
For fixed $p$, the number of canonical sequences $i$ is finite, independently of $n$. Finally,
\begin{align*}
    n(n-1)\cdots(n-r+1) \le n^r \le n^{p-s}.
\end{align*}
Combining \eqref{eq-E-factorization}, \eqref{eq-Sn-canonical-sum} and these observations gives \eqref{eq-crossing-Sn-bound}.
\end{proof}

\bigskip

Now we are able to compute the moment of the matrix $M_{n,k,m}$ from \eqref{eq-exact-raw-moment} by combining the non-crossing result \eqref{eq-Sn-exact}, Lemma \ref{lem-norm-moment-polynomial}, and the crossing bound \eqref{eq-crossing-Sn-bound}.

\begin{theorem} \label{thm-normalized-moment}
Under Assumptions \ref{Assumption-distribution} and \ref{Assumption-mixed-moments}, for every fixed $p\in\bN_+$,
\begin{align*} 
    \dfrac{1}{n^k} \bE \left[ \Tr M_{n,k,m}^p \right] =& \dfrac{1}{n^k} \sum_{s=1}^{p} \sum_{\alpha \in \cC_{s,p}^{(1)}} \left( \sum_{\varphi \in \cI_{s,m}} \prod_{t=1}^p \tau_{\varphi(\alpha_t)} \right) \left[ n^{1-s} \prod_{a=1}^s H_n(\deg_a(\alpha)) \right]^k + R_{n,k,m}^{(p)},
\end{align*}
where
\begin{align*}
    R_{n,k,m}^{(p)}
    :=& \dfrac{1}{n^k} \sum_{s=1}^{p} \sum_{\alpha \in \cC_{s,p} \setminus \cC_{s,p}^{(1)}} \left( \sum_{\varphi \in \cI_{s,m}} \prod_{t=1}^p \tau_{\varphi(\alpha_t)} \right) S_n(\alpha)^k
\end{align*}
satisfies the following bound
\begin{align*}
    |R_{n,k,m}^{(p)}|
    \le& \sum_{s=1}^{p} \sum_{\alpha \in \cC_{s,p} \setminus \cC_{s,p}^{(1)}} \left| \sum_{\varphi \in \cI_{s,m}} \prod_{t=1}^p \tau_{\varphi(\alpha_t)} \right| \left( C_p n^{p-s-1} \right)^k.
\end{align*}
\end{theorem}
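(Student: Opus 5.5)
The proof is a direct assembly of the earlier results, starting from the exact identity \eqref{eq-exact-raw-moment}. By Lemma \ref{lem-noncrossing-characterization}, split the inner sum over $\alpha\in\cC_{s,p}$ into the non-crossing part $\alpha\in\cC_{s,p}^{(1)}$ and the crossing part $\alpha\in\cC_{s,p}\setminus\cC_{s,p}^{(1)}$. The crossing part is, by definition, exactly $R_{n,k,m}^{(p)}$. It then remains to identify the non-crossing part and to bound $R_{n,k,m}^{(p)}$.

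For the non-crossing part, take any $\alpha\in\cC_{s,p}^{(1)}$. Corollary \ref{cor-Sn-exact} gives
\begin{align*}
    S_n(\alpha)=n^{1-s}\prod_{a=1}^s \bE\left[\|\y\|^{2\deg_a(\alpha)}\right],
\end{align*}
and Lemma \ref{lem-norm-moment-polynomial} replaces each norm moment by $H_n(\deg_a(\alpha))$. Raising this to the $k$th power and inserting it into \eqref{eq-exact-raw-moment} yields the displayed main term verbatim, since the weight factor $\sum_{\varphi\in\cI_{s,m}}\prod_t\tau_{\varphi(\alpha_t)}$ is unchanged.

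For the remainder, apply the triangle inequality to $R_{n,k,m}^{(p)}$ and use Corollary \ref{cor-crossing-Sn-bound}, which gives $|S_n(\alpha)|\le C_p n^{p-s}$ for each crossing $\alpha$. Then
\begin{align*}
    \frac{1}{n^k}|S_n(\alpha)|^k\le n^{-k}\left(C_pn^{p-s}\right)^k=\left(C_pn^{p-s-1}\right)^k,
\end{align*}
and summing over $s$ and crossing $\alpha$ gives the stated bound. We keep the absolute value around the $\varphi$-sum, which is harmless since the weights are nonnegative. The constant $C_p$ depends only on $p$ and $m_1,\ldots,m_{2p}$, and can be taken uniform over the finitely many crossing $\alpha$.

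There is no genuine obstacle. The only points to check are that the same $C_p$ from Corollary \ref{cor-crossing-Sn-bound} works uniformly in $\alpha$, which holds because there are finitely many canonical $\alpha$ for fixed $p$, and that moving $n^{-k}$ inside the $k$th power is legitimate, which is immediate.
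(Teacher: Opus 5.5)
Your proposal is correct and matches the paper's proof: split \eqref{eq-exact-raw-moment} into non-crossing and crossing $\alpha$, then evaluate the main term with Corollary \ref{cor-Sn-exact} and Lemma \ref{lem-norm-moment-polynomial}. The remainder is bounded by Corollary \ref{cor-crossing-Sn-bound} and the triangle inequality, with $n^{-k}(C_p n^{p-s})^k=(C_p n^{p-s-1})^k$ giving the stated form.
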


\begin{proof}
Split the sum of $\alpha$ in \eqref{eq-exact-raw-moment} into non-crossing and crossing sequences. Then apply Corollary \ref{cor-Sn-exact} and Lemma \ref{lem-norm-moment-polynomial} to the non-crossing part and Corollary \ref{cor-crossing-Sn-bound} to the crossing part to conclude the proof.
\end{proof}

\section[Asymptotic moment calculation]{Asymptotic moment calculation}
\label{sec:asymptotic-moment}

In this section, we develop the proof of Theorem \ref{Thm-main} with the help of Theorem \ref{thm-normalized-moment}.

We first establish the following quantitative estimate.
\begin{theorem} \label{thm-moment-relative-error}
Under the assumptions of Theorem \ref{Thm-main}, for every fixed $p\in\bN_+$,
\begin{align} \label{eq-main-relative-error}
    \frac1{n^k} \bE \left[ \Tr M_{n,k,m}^p\right]
    =& \Bigg[ \sum_{s=1}^p c^s \sum_{\alpha\in\cC_{s,p}^{(1)}}
    \left(\prod_{a=1}^s m_{\deg_a(\alpha)}^{(\tau)} \right)
    \left(\prod_{a=1}^s H_n(\deg_a(\alpha))\right)^k
    \Bigg] \nonumber\\
    &\quad\times
    \Bigg[1+O\left(
    \left|\frac{m}{n^k}-c\right|
    +\max_{1\le q\le p}
    \left|\frac1m\sum_{j=1}^m\tau_j^q-m_q^{(\tau)}\right|
    +\left(\frac{C_p}{n}\right)^k
    \right)\Bigg].
\end{align}
The constant $C_p\ge1$ may be chosen as in Corollary \ref{cor-crossing-Sn-bound}, after increasing it if necessary.
The implicit constant in \eqref{eq-main-relative-error} may depend on $p$, $c$, the distribution of $\xi_1$, and $m_1^{(\tau)},\ldots,m_p^{(\tau)}$, but not on $n,k,m$.
\end{theorem}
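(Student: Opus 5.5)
Starting from Theorem \ref{thm-normalized-moment}, the plan is to handle three things separately: the weight sums over injections, the factor $n^{-k}\cdot n^{(1-s)k}$ against $m^s$, and the crossing remainder.

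\textbf{Step 1: the main term.} For $\alpha\in\cC_{s,p}$ write
\[
T(\alpha)=\sum_{\varphi\in\cI_{s,m}}\prod_{t}\tau_{\varphi(\alpha_t)}=\sum_{\varphi\in\cI_{s,m}}\prod_{a=1}^s\tau_{\varphi(a)}^{\deg_a(\alpha)} .
\]
By inclusion--exclusion over coincidences, the injective sum equals the unrestricted product $\prod_a\bigl(\sum_j\tau_j^{\deg_a(\alpha)}\bigr)$ minus a finite number of terms. Each such term is a product of fewer than $s$ power sums $\sum_j\tau_j^{q}$ with $q\le p$. Using \eqref{eq-tau-moment-convergence}, each power sum is $m(m_q^{(\tau)}+O(\varepsilon_\tau))$, where
\[
\varepsilon_\tau=\max_{q\le p}\Bigl|\frac1m\sum_j\tau_j^q-m_q^{(\tau)}\Bigr| .
\]
This gives
\[
T(\alpha)=m^s\prod_a m^{(\tau)}_{\deg_a(\alpha)}\bigl(1+O(\varepsilon_\tau+1/m)\bigr).
\]
Since $m\asymp n^k$, the $1/m$ term is absorbed into $(C_p/n)^k$ after enlarging $C_p$. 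Next, $n^{-k}n^{(1-s)k}m^s=(m/n^k)^s=c^s(1+O(|m/n^k-c|))$. Both relative errors are uniform over the finitely many $\alpha$. All summands of the main term are positive, because $m_q^{(\tau)}>0$ and $H_n(d)>0$. So the error can be pulled out as a common relative factor, giving the bracket in \eqref{eq-main-relative-error}.

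\textbf{Step 2: the remainder, compared with the main term.} Write $D_n$ for the main-term bracket. It suffices to show $|R^{(p)}_{n,k,m}|\le C\,(C_p/n)^k D_n$. For crossing $\alpha\in\cC_{s,p}$, Step 1 gives $|T(\alpha)|\le Cm^s\le C'n^{sk}$. The bound in Theorem \ref{thm-normalized-moment} then gives a contribution of at most $C'(C_p n^{p-1})^k$.

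For a lower bound on $D_n$, use a single non-crossing term with $s=1$, namely $\alpha=(1,\dots,1)$. It contributes $c\,m_p^{(\tau)}H_n(p)^k$, and $H_n(p)\ge(n)_p\ge (n/2)^p$ for large $n$ (all terms of \eqref{eq-H-explicit} are nonnegative). Hence $D_n\ge c\,m_p^{(\tau)}(n^p/2^p)^k$, and
\[
|R^{(p)}_{n,k,m}|/D_n\le C''\,(2^pC_p/n)^k .
\]
Renaming $2^pC_p$ as $C_p$ yields the stated form. This is also why the constant is only claimed up to enlargement.

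\textbf{Main obstacle.} The only delicate point is that $k$ is unrestricted, so every error must be exponential in $k$ with the correct base. The comparison must therefore be done at the level of $k$th powers, using $H_n(p)\ge c_0n^p$ with $c_0$ independent of $k$. For the same reason, the $\tau$ error must enter only as a fixed relative factor, not raised to the power $k$. This holds because the weights appear outside $S_n(\alpha)^k$.

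Lower-order terms need no separate handling: every non-crossing term is already kept exactly inside the bracket. Only the crossing terms get the crude bound, and comparing them with the single term $H_n(p)^k$ suffices.
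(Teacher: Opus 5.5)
Your proposal is correct and follows essentially the paper's route. You start from the exact expansion of Theorem \ref{thm-normalized-moment}, control the injective weight sums by a collision bound of order $1/m=O(n^{-k})$, replace $(m/n^k)^s$ by $c^s$, and compare the crossing remainder $O(C_p^k n^{(p-1)k})$ with the single term $c\,m_p^{(\tau)}H_n(p)^k$ from $\alpha=(1,\ldots,1)$. The only differences are technical. You use per-summand relative errors, exploiting $m_q^{(\tau)}>0$ and $H_n(d)>0$, where the paper uses an additive error bounded uniformly via H\"older's inequality $\prod_a H_n(\deg_a(\alpha))\le H_n(p)$. You also use $H_n(p)\ge (n)_p\ge (n/2)^p$ where the paper uses Jensen's $H_n(p)\ge n^p$, which only costs a harmless factor $2^p$ in $C_p$.
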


\begin{proof}[Proof of Theorem \ref{thm-moment-relative-error}]
By Theorem \ref{thm-normalized-moment}, we have
\begin{align} \label{eq-asymptotic-exact-start}
    &\dfrac{1}{n^k} \bE\left[\Tr M_{n,k,m}^p\right] \nonumber \\
    =& \dfrac{1}{n^k} \sum_{s=1}^{p} m^s \sum_{\alpha \in \cC_{s,p}^{(1)}} \left( \dfrac{1}{m^s} \sum_{\varphi \in \cI_{s,m}} \prod_{t=1}^p \tau_{\varphi(\alpha_t)} \right) \left[ n^{1-s} \prod_{a=1}^s \bE \left[ \|\y\|^{2\deg_a(\alpha)} \right] \right]^k +R_{n,k,m}^{(p)} \nonumber \\
    =& \dfrac{1}{n^k} \sum_{s=1}^{p} m^s \sum_{\alpha \in \cC_{s,p}^{(1)}}\left( \frac1{m^s}\sum_{\varphi\in\cI_{s,m}}     \prod_{a=1}^s\tau_{\varphi(a)}^{\deg_a(\alpha)} \right) \left[ n^{1-s} \prod_{a=1}^s \bE \left[ \|\y\|^{2\deg_a(\alpha)} \right] \right]^k +R_{n,k,m}^{(p)} \nonumber \\
    =& \sum_{s=1}^p\left(\frac{m}{n^k}\right)^s
    \sum_{\alpha\in\cC_{s,p}^{(1)}}
    \left( \sum_{\varphi \in \cI_{s,m}} \prod_{a=1}^s \dfrac{1}{m} \tau_{\varphi(a)}^{\deg_a(\alpha)} \right)
    \left(\prod_{a=1}^s \bE \left[ \|\y\|^{2\deg_a(\alpha)} \right] \right)^k
    +R_{n,k,m}^{(p)}.
\end{align}

We estimate the weight coefficient. For every fixed $1\le q\le p$, \eqref{eq-tau-moment-convergence} gives
\begin{align} \label{eq-O(1) bound-tau}
    \frac1m\sum_{j=1}^m\tau_j^q=O(1).
\end{align}
For $1\le s\le p$, for $\alpha\in\cC_{s,p}$, the product $\prod_{a=1}^s \left( \frac1m \sum_{j=1}^m \tau_j^{\deg_a(\alpha)} \right)$ sums over all maps from $[s]$ to $[m]$, while the coefficient in \eqref{eq-asymptotic-exact-start} sums only over injective maps.
Every non-injective map identifies at least one pair $a<b$. Since the weights are nonnegative,
\begin{align} \label{eq-tau-collision-bound}
    0
    \le& \prod_{a=1}^s \left( \frac1m \sum_{j=1}^m \tau_j^{\deg_a(\alpha)} \right)
    - \sum_{\varphi \in \cI_{s,m}} \prod_{a=1}^s \dfrac{1}{m} \tau_{\varphi(a)}^{\deg_a(\alpha)} \nonumber \\
    \le& \frac1m \sum_{1\le a<b\le s} \left( \frac1m \sum_{j=1}^m \tau_j^{\deg_a(\alpha)+\deg_b(\alpha)} \right) \prod_{\substack{1\le t\le s\\t\ne a,b}}
    \left( \frac1m \sum_{j=1}^m \tau_j^{\deg_t(\alpha)} \right)
    =O(m^{-1}).
\end{align}

We start with the non-crossing sequence $\alpha$ and compute each product factor one by one in the following.
Firstly, by \eqref{eq-tau-moment-convergence} and \eqref{eq-tau-collision-bound}, we obtain
\begin{align} \label{eq-tau-coefficient-error}
    \frac1{m^s} \sum_{\varphi\in\cI_{s,m}} \prod_{a=1}^s \tau_{\varphi(a)}^{\deg_a(\alpha)}
    = \prod_{a=1}^s m_{\deg_a(\alpha)}^{(\tau)}
    +O\left( \max_{1\le q\le p} \left| \frac1m \sum_{j=1}^m \tau_j^q - m_q^{(\tau)} \right| +\frac1m \right).
\end{align}
Secondly, Assumption \ref{Assumption-ratio} implies
\begin{align*}
    \left|\left(\frac{m}{n^k}\right)^s-c^s\right|
    =O\left(\left|\frac{m}{n^k}-c\right|\right),
    \qquad 1\le s\le p.
\end{align*}
Thirdly, since $\sum_{a=1}^s\deg_a(\alpha)=p$, H\"older's inequality gives
\begin{align} \label{eq-norm-product-bound}
    \prod_{a=1}^s \bE \left[ \|\y\|^{2\deg_a(\alpha)} \right]
    \le \prod_{a=1}^s \left( \bE \left[ \|\y\|^{2p} \right] \right)^{\deg_a(\alpha)/p}
    \le \bE \left[ \|\y\|^{2p} \right].
\end{align}
Consequently, we have
\begin{align} \label{eq-noncrossing-coefficient-error}
    &\bigg| \sum_{s=1}^p\left(\frac{m}{n^k}\right)^s
    \sum_{\alpha\in\cC_{s,p}^{(1)}}
    \left(\frac1{m^s}\sum_{\varphi\in\cI_{s,m}}
    \prod_{a=1}^s\tau_{\varphi(a)}^{\deg_a(\alpha)}\right)
    \left(\prod_{a=1}^s \bE \left[ \|\y\|^{2\deg_a(\alpha)} \right] \right)^k \nonumber \\
    &\qquad\qquad \qquad\qquad \qquad\qquad - \sum_{s=1}^p c^s \sum_{\alpha\in\cC_{s,p}^{(1)}}
    \left(\prod_{a=1}^s m_{\deg_a(\alpha)}^{(\tau)} \right)
    \left(\prod_{a=1}^s H_n(\deg_a(\alpha))\right)^k \bigg| \nonumber \\
    \le& \left( \bE \left[ \|\y\|^{2p} \right] \right)^k
    O\left( \left| \frac{m}{n^k} - c \right| +\max_{1\le q\le p} \left| \frac1m \sum_{j=1}^m \tau_j^q - m_q^{(\tau)} \right| +\frac1m \right).
\end{align}

We next estimate the contribution of the crossing sequence $\alpha$. For every $\alpha\in\cC_{s,p}$, by \eqref{eq-O(1) bound-tau} and \eqref{eq-tau-collision-bound} imply
\begin{align*}
    0 \le
    \sum_{\varphi\in\cI_{s,m}} \prod_{a=1}^s \frac1{m} \tau_{\varphi(a)}^{\deg_a(\alpha)}
    \le \prod_{a=1}^s \left( \frac1m \sum_{j=1}^m \tau_j^{\deg_a(\alpha)} \right)
    =O(1).
\end{align*}
The remainder estimate in Theorem \ref{thm-normalized-moment}, the boundedness of $m/n^k$, and the finiteness of the sets of canonical sequences of length $p$ therefore give
\begin{align*}
    |R_{n,k,m}^{(p)}|
    =O\left(C_p^k n^{k(p-1)}\right).
\end{align*}
Since $\bE \left[ \|\y\|^2 \right] =n$, Jensen's inequality gives $\bE \left[ \|\y\|^{2p} \right] \ge n^p$. It follows that
\begin{align} \label{eq-crossing-relative-scale}
    \frac{|R_{n,k,m}^{(p)}|}{\left( \bE \left[ \|\y\|^{2p} \right] \right)^k}
    =O\left(\left(\frac{C_p}{n}\right)^k\right).
\end{align}

The non-crossing sequence $\alpha=(1,\ldots,1)\in\cC_{1,p}^{(1)}$ contributes exactly
\begin{align*}
    c m_p^{(\tau)} \left( \bE \left[ \|\y\|^{2p} \right] \right)^k.
\end{align*}
The Jensen's inequality implies $m_p^{(\tau)} \ge \left(m_1^{(\tau)} \right)^p > 0$.
Thus, by the nonnegativity, the sum for non-crossing $\alpha$ in \eqref{eq-main-relative-error} is bounded below by $c m_p^{(\tau)} \left( \bE \left[ \|\y\|^{2p} \right] \right)^k$.
Assumption \ref{Assumption-ratio} implies that $m^{-1} = O(n^{-k})$. Combining this observation and the lower bound with \eqref{eq-noncrossing-coefficient-error} and \eqref{eq-crossing-relative-scale} completes the proof.
\end{proof}

\begin{proof}[Proof of Theorem \ref{Thm-main}]
With the help of Assumption \ref{Assumption-ratio} and \eqref{eq-tau-moment-convergence}, Theorem \ref{Thm-main} follows from Theorem \ref{thm-moment-relative-error}.
\end{proof}

\section{Variance Calculation}
\label{sec:variance}

In this section, we prove Theorem \ref{Thm-variance} following the variance calculation in \cite{BYY2022}*{Section 3.2}. We recall the vectors $\y_\alpha^{(l)}$ and matrix $M_{n,k,m}$ introduced in Section \ref{sec:intro}.

\begin{proof}[Proof of Theorem \ref{Thm-variance}]
Fix $p \in \bN_+$. Throughout the proof, $C_p$ denotes a finite positive constant independent of $n,k,m$, which may vary from line to line.
For $\alpha,\beta \in [m]^p$, we use the conventions $\alpha_{p+1} = \alpha_1$ and $\beta_{p+1} = \beta_1$. We write $\alpha \cap \beta = \varnothing$ if the two sequences have no common value. Recall $E(i,\alpha)$ and $S_n(\alpha)$ from \eqref{eq-def-E(i,alpha)} and \eqref{eq-def-Sn}, respectively. For $i,j \in [n]^p$, define
\begin{align} \label{eq-def-E'}
    E'(i,\alpha;j,\beta)
    := \bE \left[ \prod_{t=1}^p
    \overline{\left( \y_{\alpha_t} \right)_{i^{(t)}}}
    \left( \y_{\alpha_{t+1}} \right)_{i^{(t)}}
    \overline{\left( \y_{\beta_t} \right)_{j^{(t)}}}
    \left( \y_{\beta_{t+1}} \right)_{j^{(t)}} \right].
\end{align}
Since $M_{n,k,m}$ is Hermitian, $\Tr M_{n,k,m}^p$ is real. Expanding the trace and using the independence of the tensor factors as in \eqref{eq-moment-1}, we obtain
\begin{align} \label{eq-variance-expansion}
    &\Var \left( \frac1{n^k} \Tr M_{n,k,m}^p \right) \nonumber \\
    &= \frac1{n^{2k}} \sum_{\alpha,\beta \in [m]^p}
    \left( \prod_{t=1}^p \tau_{\alpha_t} \tau_{\beta_t} \right)
    \left\{
    \left( \sum_{i,j \in [n]^p} E'(i,\alpha;j,\beta) \right)^k
    - \left( S_n(\alpha) S_n(\beta) \right)^k
    \right\}.
\end{align}
When $\alpha \cap \beta = \varnothing$, independence gives
\begin{align*}
    E'(i,\alpha;j,\beta)
    &= E(i,\alpha) E(j,\beta),
\end{align*}
so the corresponding summand in \eqref{eq-variance-expansion} is zero. It remains to consider $\alpha \cap \beta \ne \varnothing$.

Fix a pair $(\alpha,\beta)$ of sequences satisfying $\alpha \cap \beta \ne \varnothing$, and let $s$ be the number of distinct values in the sequence $(\alpha,\beta) = (\alpha_1,\ldots,\alpha_p,\beta_1,\ldots,\beta_p)$.
For $i,j \in [n]^p$, let $r$ be the number of distinct values in the sequence $(i,j)$.
We join the graphs $g(i,\alpha)$ and $g(j,\beta)$ in the following way.
We identify the $\alpha$-vertices $\alpha_1,\ldots,\alpha_p,\beta_1,\ldots,\beta_p$ on the same $\alpha$-line with the same value, and identify the $i$-vertices $i_1,\ldots,i_p,j_1,\ldots,j_p$ on the same $i$-line with the same value.
We retain all coincident edges.
The resulting graph is connected, since $\alpha$ and $\beta$ have a common value, and it has $4p$ edges counted with multiplicity.

If there are exactly one edge associated with a pair of $\alpha$-vertex $a$ and $i$-vertex $b$, then the corresponding entry $(\y_a)_b$ occurs only once in the product in \eqref{eq-def-E'}.
Independence and centering therefore imply that $E'(i,\alpha;j,\beta)=0$.
This particular edge must belong to only one of the two graphs $g(i,\alpha)$ and $g(j,\beta)$. Thus, the entry $(\y_a)_b$ occurs only once in the two expectations $E(i,\alpha)$ and $E(j,\beta)$, which means that $E(i,\alpha) E(j,\beta) = 0$.
Consequently, whenever either expression is nonzero, every pair of adjacent vertices is joined by at least two edges.
There are $4p$ edges counted with multiplicity, so at most $2p$ pairs of vertices are adjacent.
Since the graph is connected, it follows that
\begin{align} \label{eq-variance-vertex-bound}
    r+s-1 \le 2p.
\end{align}
By the H\"older's inequality, we have
\begin{align*}
    |E'(i,\alpha;j,\beta)|
    &\le m_{4p},
    \qquad |E(i,\alpha) E(j,\beta)|
    \le m_{2p}^2 \le m_{4p}.
\end{align*}
For fixed $p$, there are finitely many canonical sequences $(i,j)$ of length $2p$. Each canonical sequence with $r$ distinct values corresponds to $(n)_r \le n^r$ equivalent sequences in $[n]^{2p}$.
Thus, using $r \le 2p+1-s$ from \eqref{eq-variance-vertex-bound} and summing over the coordinate indices, we obtain
\begin{align} \label{eq-variance-single-factor-bound}
    \left| \sum_{i,j \in [n]^p} E'(i,\alpha;j,\beta) \right|
    + |S_n(\alpha) S_n(\beta)|
    &\le C_p n^{2p+1-s}.
\end{align}

We next sum over the column indices. We sum over canonical sequences $(\alpha,\beta)$ of length $2p$ and use the injective maps in $\cI_{s,m}$ to obtain all equivalent sequences in $[m]^{2p}$. We set $\deg_a(\alpha)=0$ when $a$ does not occur in $\alpha$, and similarly for $\beta$. Since the weights are nonnegative, \eqref{eq-tau-moment-convergence} gives
\begin{align} \label{eq-variance-weight-bound}
    &\frac1{m^s} \sum_{\varphi \in \cI_{s,m}}
    \prod_{a=1}^s \tau_{\varphi(a)}^{\deg_a(\alpha)+\deg_a(\beta)}
    \le \prod_{a=1}^s \left( \frac1m \sum_{j=1}^m
    \tau_j^{\deg_a(\alpha)+\deg_a(\beta)} \right)
    = O(1).
\end{align}
Substituting \eqref{eq-variance-single-factor-bound} and \eqref{eq-variance-weight-bound} to \eqref{eq-variance-expansion}, and grouping the column indices by their canonical sequences, we obtain
\begin{align*}
    \Var \left( \frac1{n^k} \Tr M_{n,k,m}^p \right)
    \le C_p^k n^{(2p-1)k}
    \sum_{s=1}^{2p-1} \left( \frac{m}{n^k} \right)^s
    \sum_{\substack{(\alpha,\beta) \in \cC_{s,2p} \\
    \alpha \cap \beta \ne \varnothing}}
    \left( \prod_{a=1}^s \left( \frac1m \sum_{j=1}^m
    \tau_j^{\deg_a(\alpha)+\deg_a(\beta)} \right) \right).
\end{align*}
The proof of \eqref{eq-main-variance} is concluded by Assumption \ref{Assumption-ratio} and \eqref{eq-tau-moment-convergence}, and increasing $C_p$ if necessary.
\end{proof}

\section{Applications to the spectrum}
\label{sec:examples}

In this section, we apply Theorems \ref{Thm-main} and \ref{Thm-variance} to four cases.
In Section \ref{sec:example-unit-modulus}, we treat $m_4=1$ and recover part of \cite{Yuan2024}.
In Section \ref{sec:example-small-k}, we establish the limiting moment sequence in the regime of \cite{Lytova2018}*{Theorem 1.2 and Remark 4.1}.
In Section \ref{sec:example-linear-k}, we recover the limiting moment formula in \cite{BYY2022}*{Theorem 2.1}.
In Section \ref{sec:example-quadratic-k}, under the conditions $m_4>1$ and $k/n \to \infty$, we compute the growth of the $p$th moment for $\widetilde M_{n,k,m}$ for all fixed $p \in \bN_+$, both in expectation and almost surely. We also develop a lower bound for the largest eigenvalue in Section \ref{sec:example-quadratic-k}.

Throughout this section, the assumptions of Theorem \ref{Thm-main} hold.
We focus on the normalized matrix model $\widetilde M_{n,k,m} = n^{-k} M_{n,k,m}$ introduced in Section \ref{sec:intro}.
We have the following corollary, which is a combination of Theorems \ref{Thm-main} and \ref{Thm-variance}.

\begin{corollary} \label{cor-normalized-moment-as}
Under the assumptions of Theorem \ref{Thm-main}, for every fixed $p \in \bN_+$,
\begin{align} \label{eq-example-normalized-moment}
    \frac1{n^k} \bE \left[ \Tr \widetilde M_{n,k,m}^p \right]
    &= \Bigg[ \sum_{s=1}^p c^s \sum_{\alpha \in \cC_{s,p}^{(1)}}
    \left( \prod_{a=1}^s m_{\deg_a(\alpha)}^{(\tau)} \right)
    \left( \prod_{a=1}^s \frac{H_n(\deg_a(\alpha))}{n^{\deg_a(\alpha)}} \right)^k \Bigg] (1+o(1)).
\end{align}
Moreover,
\begin{align} \label{eq-example-normalized-moment-as}
    \frac1{n^k} \Tr \widetilde M_{n,k,m}^p
    &= \Bigg[ \sum_{s=1}^p c^s \sum_{\alpha \in \cC_{s,p}^{(1)}}
    \left( \prod_{a=1}^s m_{\deg_a(\alpha)}^{(\tau)} \right)
    \left( \prod_{a=1}^s \frac{H_n(\deg_a(\alpha))}{n^{\deg_a(\alpha)}} \right)^k \Bigg] (1+o(1))
\end{align}
almost surely, simultaneously for all fixed $p \in \bN_+$.
\end{corollary}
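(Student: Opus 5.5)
The first statement \eqref{eq-example-normalized-moment} is a rescaling of Theorem \ref{Thm-main}. Since $\widetilde M_{n,k,m}=n^{-k}M_{n,k,m}$, we have $\Tr \widetilde M_{n,k,m}^p = n^{-pk}\Tr M_{n,k,m}^p$. For each $\alpha\in\cC_{s,p}^{(1)}$ we have $\sum_a \deg_a(\alpha)=p$, so $n^{-pk}\bigl(\prod_a H_n(\deg_a(\alpha))\bigr)^k=\bigl(\prod_a H_n(\deg_a(\alpha))/n^{\deg_a(\alpha)}\bigr)^k$. Multiplying \eqref{eq-main-asymptotic} by $n^{-pk}$ therefore gives \eqref{eq-example-normalized-moment}, with the same $(1+o(1))$ factor.

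For the almost sure version, write $A_{n}^{(p)}$ for the bracket in \eqref{eq-example-normalized-moment}. The key lower bound is that the term $s=1$, $\alpha=(1,\ldots,1)$, gives
\begin{align*}
A_n^{(p)}\ge c\, m_p^{(\tau)}\bigl(H_n(p)/n^p\bigr)^k \ge c\,m_p^{(\tau)},
\end{align*}
because $H_n(p)=\bE\|\y\|^{2p}\ge n^p$ by Jensen. All terms are nonnegative. Theorem \ref{Thm-variance} rescaled by $n^{-2pk}$ gives
\begin{align*}
\Var\Bigl(\tfrac1{n^k}\Tr\widetilde M_{n,k,m}^p\Bigr)\le C_p^k n^{-k}.
\end{align*}
Dividing by $(A_n^{(p)})^2\ge (c\,m_p^{(\tau)})^2$, the relative variance is $O((C_p/n)^k)$.

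Chebyshev's inequality then gives, for each $\varepsilon>0$,
\begin{align*}
\mathbb P\Bigl(\bigl|\tfrac1{n^k}\Tr\widetilde M^p_{n,k,m}-\tfrac1{n^k}\bE\Tr\widetilde M^p_{n,k,m}\bigr|>\varepsilon A_n^{(p)}\Bigr)\le \varepsilon^{-2}O\bigl((C_p/n)^k\bigr).
\end{align*}
We want this to be summable in $n$, so that Borel--Cantelli applies. Since $k\to\infty$, we eventually have $k\ge 2$ and $n>2C_p$, so $(C_p/n)^k\le C_p^2/n^2$, which is summable. Hence the fluctuation is $o(A_n^{(p)})$ almost surely. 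Combined with \eqref{eq-example-normalized-moment}, this yields \eqref{eq-example-normalized-moment-as} for each fixed $p$. Intersecting the countably many full-probability events over $p\in\bN_+$ gives simultaneity.

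The main point to check is the summability step. It needs $k\ge2$ eventually, which holds since Assumption \ref{Assumption-ratio} requires $k\to\infty$. It also needs the variance bound to be compared with $A_n^{(p)}$ rather than with the leading constant. This is exactly why the lower bound $A_n^{(p)}\ge c\,m_p^{(\tau)}>0$ is essential, even when $A_n^{(p)}$ itself grows.
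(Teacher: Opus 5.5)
Your proposal is correct and follows essentially the same route as the paper: you rescale Theorem \ref{Thm-main} using $\sum_a \deg_a(\alpha)=p$, rescale Theorem \ref{Thm-variance} to bound the variance by $(C_p/n)^k$, apply Chebyshev and Borel--Cantelli using that $k\ge 2$ eventually, and use that the bracket is bounded away from zero. The only cosmetic difference is the lower bound on the bracket: you use the $s=1$ term together with Jensen ($H_n(p)\ge n^p$), whereas the paper uses the $s=p$ term $\alpha=(1,\ldots,p)$, which equals $c^p(m_1^{(\tau)})^p$ exactly because $H_n(1)=n$.
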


\begin{proof}
Since $\widetilde M_{n,k,m} = n^{-k} M_{n,k,m}$ and $\sum_{a=1}^s \deg_a(\alpha) = p$, formula \eqref{eq-example-normalized-moment} follows from Theorem \ref{Thm-main}.
By Theorem \ref{Thm-variance},
\begin{align*}
    \Var \left( \frac1{n^k} \Tr \widetilde M_{n,k,m}^p \right)
    &= n^{-2pk} \Var \left( \frac1{n^k} \Tr M_{n,k,m}^p \right)
    \le \left( \frac{C_p}{n} \right)^k.
\end{align*}
For all sufficiently large $n$, we have $k \ge 2$ and $C_p/n<1$. Therefore,
\begin{align*}
    \sum_{n=1}^{\infty} \Var \left( \frac1{n^k} \Tr \widetilde M_{n,k,m}^p \right)
    &< \infty.
\end{align*}
By the Chebyshev's inequality and the Borel--Cantelli lemma, we establish that
\begin{align} \label{eq-example-centered-moment-as}
    \frac1{n^k} \Tr \widetilde M_{n,k,m}^p
    - \frac1{n^k} \bE \left[ \Tr \widetilde M_{n,k,m}^p \right]
    &\longrightarrow 0
    \qquad \text{almost surely}.
\end{align}

The term corresponding to $s=p$ and $\alpha=(1,\ldots,p)$ in the sum in \eqref{eq-example-normalized-moment} is $c^p(m_1^{(\tau)})^p > 0$, since $H_n(1)=n$. All summands in \eqref{eq-example-normalized-moment} are nonnegative, so the sum is bounded away from zero. Combining \eqref{eq-example-normalized-moment} and \eqref{eq-example-centered-moment-as} yields \eqref{eq-example-normalized-moment-as}. Finally, taking the intersection of these probability-one events over $p \in \bN_+$ concludes the proof.
\end{proof}

\subsection[The case m4=1]{The case $m_4=1$}
\label{sec:example-unit-modulus}

In this subsection, we consider the special setting that $m_4=1$.
Then $\bE [(|\xi_1|^2 - 1)^2] = m_4 - 1 = 0$, which implies that $|\xi_1|=1$ almost surely.
The following conclusions hold without any restrictions on the growth rate of $k$.

\begin{proposition} \label{prop-example-unit-modulus}
Under the assumptions of Theorem \ref{Thm-main}, if $|\xi_1|=1$ almost surely, then, for every fixed $p \in \bN_+$,
\begin{align} \label{eq-example-unit-modulus-moments}
    \frac1{n^k} \bE \left[ \Tr \widetilde M_{n,k,m}^p \right]
    &\longrightarrow \sum_{s=1}^p c^s \sum_{\alpha \in \cC_{s,p}^{(1)}} \prod_{a=1}^s m_{\deg_a(\alpha)}^{(\tau)}, \qquad n \to \infty.
\end{align}
The random moment $n^{-k} \Tr \widetilde M_{n,k,m}^p$ converges to the same limit almost surely.
\end{proposition}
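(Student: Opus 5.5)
The plan is to plug the unit-modulus hypothesis into Corollary \ref{cor-normalized-moment-as} and observe that every factor raised to the power $k$ equals one exactly. The key is that $|\xi_1|=1$ almost surely forces $\|\y\|^2=n$ almost surely. By Lemma \ref{lem-norm-moment-polynomial},
\begin{align*}
    H_n(d)=\bE\left[\|\y\|^{2d}\right]=n^d
\end{align*}
for every $d\ge0$. One can also see this from \eqref{eq-H-explicit}: all $m_{2\ell}=1$, so $H_n(d)=\sum_{\pi\in\cP(d)}(n)_{|\pi|}=n^d$ by the standard Stirling identity. Either way, $H_n(\deg_a(\alpha))/n^{\deg_a(\alpha)}=1$ for each $a$. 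Hence the product raised to the $k$th power in \eqref{eq-example-normalized-moment} is identically $1$, with no error term depending on $k$. Note that the unit-modulus distribution satisfies Assumption \ref{Assumption-distribution} trivially, and Assumption \ref{Assumption-mixed-moments} is part of the hypotheses of Theorem \ref{Thm-main}.

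Substituting this into \eqref{eq-example-normalized-moment}, the bracket becomes
\begin{align*}
    L_p:=\sum_{s=1}^p c^s \sum_{\alpha \in \cC_{s,p}^{(1)}} \prod_{a=1}^s m_{\deg_a(\alpha)}^{(\tau)}.
\end{align*}
This is a fixed finite positive number, independent of $n$ and $k$. Therefore
\begin{align*}
    \frac1{n^k}\bE\left[\Tr \widetilde M_{n,k,m}^p\right]=L_p(1+o(1))\to L_p,
\end{align*}
which is \eqref{eq-example-unit-modulus-moments}. Applying \eqref{eq-example-normalized-moment-as} the same way gives $n^{-k}\Tr\widetilde M_{n,k,m}^p=L_p(1+o(1))$ almost surely, hence almost sure convergence to $L_p$, simultaneously for all $p$.

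There is no real obstacle. The only point to check is that the $o(1)$ in Corollary \ref{cor-normalized-moment-as} is uniform in the growth of $k$. This holds because Theorem \ref{thm-moment-relative-error} gives an error of order $|m/n^k-c|+\max_q|\frac1m\sum\tau_j^q-m_q^{(\tau)}|+(C_p/n)^k$, and each of these terms tends to zero under Assumption \ref{Assumption-ratio} and \eqref{eq-tau-moment-convergence}, whatever the relation between $k$ and $n$.
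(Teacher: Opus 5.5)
Your proposal is correct and matches the paper's proof: the paper likewise uses Lemma \ref{lem-norm-moment-polynomial} to get $H_n(d)=n^d$ when $|\xi_1|=1$, and then substitutes this into Corollary \ref{cor-normalized-moment-as}. Your additional check, via Theorem \ref{thm-moment-relative-error}, that the $o(1)$ there does not depend on how fast $k$ grows is correct; the paper leaves this point implicit.
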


\begin{proof}
Lemma \ref{lem-norm-moment-polynomial} gives
\begin{align*}
    H_n(p)
    &= n^p, \qquad \forall p \in \bN_+.
\end{align*}
The proof is concluded by substituting this identity into Corollary \ref{cor-normalized-moment-as}.
\end{proof}

\begin{corollary} \label{Coro-example-unit-modulus}
Under the assumptions of Proposition \ref{prop-example-unit-modulus}, suppose in addition that $m_q^{(\tau)} \le A^q q^q$ for all $q \in \bN_+$ and some constant $A > 0$.
Then the ESD of $\widetilde M_{n,k,m}$ converges weakly, both in expectation and almost surely, to a uniquely determined probability measure.
\end{corollary}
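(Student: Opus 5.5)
Denote by $\beta_p$ the limit in \eqref{eq-example-unit-modulus-moments}, so that $\beta_p=\sum_{s=1}^p c^s\sum_{\alpha\in\cC_{s,p}^{(1)}}\prod_{a=1}^s m_{\deg_a(\alpha)}^{(\tau)}$. The plan is the standard method of moments. Proposition \ref{prop-example-unit-modulus} gives $\frac1{n^k}\bE[\Tr\widetilde M_{n,k,m}^p]\to\beta_p$ and $\frac1{n^k}\Tr\widetilde M_{n,k,m}^p\to\beta_p$ almost surely, simultaneously for all $p$. Since these are exactly the $p$th moments of the expected ESD and of the ESD of $\widetilde M_{n,k,m}$, it suffices to prove two facts. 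First, $(\beta_p)_{p\ge0}$ (with $\beta_0=1$) is the moment sequence of some probability measure $\mu$ on $[0,\infty)$. Second, $\mu$ is determined by its moments. Granting both, moment convergence to a determinate moment sequence implies weak convergence, both for the expected ESD and, on the full-probability event of Proposition \ref{prop-example-unit-modulus}, for the ESD itself.

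\textbf{Existence of $\mu$.} For each $n$, the expected ESD $\bE\mu_n$ is a probability measure on $[0,\infty)$, since $\widetilde M_{n,k,m}$ is positive semidefinite. Its moments converge to $\beta_p$, so in particular the second moments are bounded and the family is tight. Take a weakly convergent subsequence with limit $\mu$. Uniform integrability of $x^p$, which follows from the boundedness of the $(p+1)$th moments (indeed of the $2p$th moments), shows that $\int x^p\,d\mu=\beta_p$. Alternatively, $(\beta_p)$ is a limit of Hankel-positive sequences, so the Stieltjes moment problem is solvable; the subsequence argument is more direct.

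\textbf{Determinacy, the main step.} I will verify Carleman's condition $\sum_p\beta_{p}^{-1/(2p)}=\infty$, which suffices for measures on $[0,\infty)$. I could also use $\sum_p\beta_{2p}^{-1/(2p)}=\infty$ for the symmetrized measure, but the Stieltjes version is enough. To bound $\beta_p$, I will use $\deg_a(\alpha)\le p$ and $\sum_a\deg_a(\alpha)=p$, together with the hypothesis $m_q^{(\tau)}\le A^q q^q$. This gives
\[
\prod_{a=1}^s m_{\deg_a(\alpha)}^{(\tau)}\le A^p\prod_a \deg_a(\alpha)^{\deg_a(\alpha)}\le A^p p^p .
\]
By Lemma \ref{lem-Bai}, $\#\cC_{s,p}^{(1)}$ is a Narayana number, and $\sum_s\#\cC_{s,p}^{(1)}$ is the Catalan number, which is at most $4^p$. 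Hence
\[
\beta_p\le \max(1,c)^p\,4^p A^p p^p=(Bp)^p .
\]
It follows that $\beta_p^{-1/(2p)}\ge (Bp)^{-1/2}$, whose sum over $p$ diverges. This yields Carleman's condition and hence uniqueness.

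The only delicate point is the final assembly for the almost sure statement. On the event where $\frac1{n^k}\Tr\widetilde M_{n,k,m}^p\to\beta_p$ for every $p$, I will repeat the tightness and subsequence argument pathwise. Every subsequential limit has moments $\beta_p$, so by determinacy it equals $\mu$, and therefore the whole sequence of ESDs converges weakly to $\mu$. The same reasoning applied to $\bE\mu_n$ gives convergence in expectation.
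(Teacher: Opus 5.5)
Your proposal is correct and follows the paper's proof. You use the same bound $\prod_a m^{(\tau)}_{\deg_a(\alpha)}\le A^p p^p$ and the Catalan count $\le 4^p$ from Lemma \ref{lem-Bai}, which give $\beta_p\le [4A\max\{1,c\}]^p p^p$ and hence Carleman's condition; you additionally spell out the existence of the limiting measure and the subsequence argument of the method of moments, both of which the paper leaves implicit.
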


\begin{proof}
By Lemma \ref{lem-Bai}, there are at most $4^p$ canonical non-crossing sequences of length $p$. Since

\begin{align*}
    \prod_{a=1}^s m_{\deg_a(\alpha)}^{(\tau)}
    &\le A^p \prod_{a=1}^s \deg_a(\alpha)^{\deg_a(\alpha)} \le A^p p^p,
\end{align*}
the limiting $p$th moment in \eqref{eq-example-unit-modulus-moments} is at most $[4 A \max\{1,c\}]^p p^p$.
Thus Carleman's condition is satisfied, and the limiting moment sequence uniquely determines a probability measure.
The proof is concluded.
\end{proof}

\begin{remark}
In the case that $\tau_\alpha \equiv 1$, it is verified in \cite{Yuan2024} that this unique probability measure is exactly the Mar\v{c}enko--Pastur law.
\end{remark}

\begin{remark}
Corollary \ref{Coro-example-unit-modulus} recovers convergence of ESD of $\widetilde M_{n,k,m}$ in \cite{Yuan2024}*{Corollary 1.1} under the present assumptions. The corresponding moment results are given in \cite{Yuan2024}*{Theorems 1.1 and 1.2}.
\end{remark}

\subsection[The case k/n=o(1)]{The case $k/n=o(1)$}
\label{sec:example-small-k}

In the setting $k/n=o(1)$, Corollary \ref{cor-normalized-moment-as} implies the following result.

\begin{proposition} \label{prop-example-small-k-moments}
Under the assumptions of Theorem \ref{Thm-main}, if $k/n \to 0$, then for every fixed $p \in \bN_+$,
\begin{align} \label{eq-example-small-k-moments}
    \frac1{n^k} \bE \left[ \Tr \widetilde M_{n,k,m}^p \right]
    &\longrightarrow \sum_{s=1}^p c^s \sum_{\alpha \in \cC_{s,p}^{(1)}} \prod_{a=1}^s m_{\deg_a(\alpha)}^{(\tau)}.
\end{align}
The random moment $n^{-k} \Tr \widetilde M_{n,k,m}^p$ converges to the same limit almost surely.
\end{proposition}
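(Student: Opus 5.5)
The plan is to start from Corollary \ref{cor-normalized-moment-as}. It reduces both the expectation and the almost sure statement to one claim: for each fixed $d\in\bN_+$,
\begin{align*}
    \left(\frac{H_n(d)}{n^{d}}\right)^k \longrightarrow 1 \qquad \text{when } k/n\to0 .
\end{align*}
Assume this holds. The sum in \eqref{eq-example-normalized-moment} is a finite sum over $s\le p$ and $\alpha\in\cC_{s,p}^{(1)}$, with at most $4^p$ terms, and each $\deg_a(\alpha)\le p$. Hence every product $\prod_{a}(H_n(\deg_a(\alpha))/n^{\deg_a(\alpha)})^k$ tends to $1$. The bracket then converges to $\sum_{s}c^s\sum_{\alpha}\prod_a m^{(\tau)}_{\deg_a(\alpha)}$, and the factor $1+o(1)$ does not affect the limit. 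The same argument applied to \eqref{eq-example-normalized-moment-as} gives the almost sure convergence.

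The key step is the expansion of $H_n(d)/n^d$. By Lemma \ref{lem-norm-moment-polynomial}, $H_n(d)$ is a polynomial of degree $d$ with leading coefficient one, so
\begin{align*}
    \frac{H_n(d)}{n^d} = 1 + \frac{b_d}{n} + O(n^{-2})
\end{align*}
for some constant $b_d$ depending only on $d$ and $m_4,\ldots,m_{2d}$. This constant can be read off from \eqref{eq-H-explicit}. The partition into singletons contributes $(n)_d=n^d-\binom d2 n^{d-1}+\cdots$. The partitions with exactly one block of size two contribute $\binom d2 m_4 (n)_{d-1}$. Every other partition has at most $d-2$ blocks. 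Therefore $b_d=\binom d2(m_4-1)$. For the present statement only the bound $|H_n(d)/n^d-1|\le C_d/n$ for large $n$ is needed.

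Then take logarithms: $k\log(H_n(d)/n^d)=k\cdot O(1/n)=O(k/n)\to0$. This is valid because $H_n(d)/n^d\to1$, so the logarithm is defined and satisfies $\log(1+x)=O(|x|)$. Exponentiating gives the claim.

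No step here is a real obstacle. The only point needing care is using that $H_n(d)/n^d-1$ is $O(1/n)$ uniformly in the relevant $d\le p$, rather than merely $o(1)$. Mere convergence to $1$ would not survive the $k$th power when $k\to\infty$, so the polynomial structure from Lemma \ref{lem-norm-moment-polynomial} is essential.
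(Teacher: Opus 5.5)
Your proposal is correct and follows essentially the same route as the paper: reduce via Corollary \ref{cor-normalized-moment-as} to showing $k\log\bigl(H_n(d)/n^d\bigr)=O(k/n)\to 0$ for each fixed $d\le p$, using the expansion $H_n(d)/n^d=1+\binom d2(m_4-1)/n+O(n^{-2})$ obtained from the partition formula. The only difference is that the paper records the explicit first-order coefficient because it reuses it in the $k/n=O(1)$ and $k/n\to\infty$regimes, whereas you correctly note that only the $O(1/n)$ bound is needed here.
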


\begin{proof}
In order to use Corollary \ref{cor-normalized-moment-as}, we need to estimate the quantity $H_n(d)/n^d$ first.

For $d=1$, $H_n(1)/n = 1$, and for every fixed $d \ge 2$,
\begin{align} \label{eq-first-norm-correction}
    \frac{H_n(d)}{n^d}
    &= 1 + \binom d2 \frac{m_4 - 1}{n} + O(n^{-2}).
\end{align}
Indeed, in the expansion of $\bE \left[ \| \widetilde{\y} \|^{2d} \right] = n^{-d} \bE \left[ (\sum_{b=1}^n |\xi_b|^2)^d \right]$, the terms with $d$ distinct coordinate indices contribute
\begin{align*}
    \frac{(n)_d}{n^d}
    &= 1 - \binom d2 \frac1n + O(n^{-2}).
\end{align*}
The terms with exactly $d-1$ distinct indices have one index occurring twice and contribute
\begin{align*}
    \binom d2 m_4 \frac{(n)_{d-1}}{n^d}
    &= \binom d2 \frac{m_4}{n} + O(n^{-2}).
\end{align*}
All remaining terms use at most $d-2$ distinct indices and contribute $O(n^{-2})$. This proves \eqref{eq-first-norm-correction}.

Hence, for every fixed $d \ge 1$,
\begin{align} \label{eq-example-first-log-correction}
    \log \frac{H_n(d)}{n^d}
    &= \frac{m_4 - 1}{n} \binom d2 + O(n^{-2}),
\end{align}
which implies
\begin{align*}
    k \sum_{a=1}^s \log \frac{H_n(\deg_a(\alpha))}{n^{\deg_a(\alpha)}}
    &= O(k/n) = o(1).
\end{align*}
Taking the exponential and substituting into \eqref{eq-example-normalized-moment} and \eqref{eq-example-normalized-moment-as}, we obtain the convergence in expectation and almost surely.
\end{proof}

Similar to Corollary \ref{Coro-example-unit-modulus}, an application of Carleman's condition yields the following corollary immediately.

\begin{corollary} \label{Coro-example-small-k-moments}
Under the assumptions of Proposition \ref{prop-example-small-k-moments}, suppose in addition that $m_q^{(\tau)} \le A^q q^q$ for all $q \in \bN_+$ and some constant $A > 0$.
Then the ESD of $\widetilde M_{n,k,m}$ converges weakly, both in expectation and almost surely, to a uniquely determined probability measure.
\end{corollary}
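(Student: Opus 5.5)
The plan is to proceed exactly as in the proof of Corollary \ref{Coro-example-unit-modulus}: verify Carleman's condition for the limiting moment sequence, then pass from convergence of moments to weak convergence of the ESD. By Proposition \ref{prop-example-small-k-moments}, for every fixed $p$, the normalized moments $n^{-k}\bE[\Tr \widetilde M_{n,k,m}^p]$ converge to
\begin{align*}
    \mu_p := \sum_{s=1}^p c^s \sum_{\alpha\in\cC_{s,p}^{(1)}} \prod_{a=1}^s m_{\deg_a(\alpha)}^{(\tau)} .
\end{align*}
The random moments $n^{-k}\Tr \widetilde M_{n,k,m}^p$ converge to the same $\mu_p$ almost surely, simultaneously for all $p$; for the simultaneity I will take the countable intersection over $p$, as in Corollary \ref{cor-normalized-moment-as}. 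This limit sequence is identical to the one in Proposition \ref{prop-example-unit-modulus}, so the bound obtained there can be reused.

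First I bound $\mu_p$. By Lemma \ref{lem-Bai}, the total number of canonical non-crossing sequences of length $p$ is a Narayana sum, namely the Catalan number, which is at most $4^p$. Using the hypothesis $m_q^{(\tau)}\le A^q q^q$ together with $\sum_a \deg_a(\alpha)=p$, each product satisfies
\begin{align*}
    \prod_{a=1}^s m_{\deg_a(\alpha)}^{(\tau)} \le A^p p^p .
\end{align*}
Since $c^s\le \max\{1,c\}^p$, this gives $\mu_p\le [4A\max\{1,c\}]^p p^p$. It follows that $\mu_{2p}^{-1/(2p)}\ge (8A\max\{1,c\}p)^{-1}$, so $\sum_p \mu_{2p}^{-1/(2p)}=\infty$ and Carleman's condition holds. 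Hence at most one probability measure $\mu$ has moments $(\mu_p)$.

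Existence of $\mu$ follows because each $\mu_p$ is a limit of moments of probability measures (the expected ESDs). These are tight, since their second moments are bounded. Any subsequential weak limit has moments $\mu_p$ by uniform integrability, which comes from the bounded higher moments. Uniqueness then makes every subsequential limit equal to $\mu$, so the expected ESD converges weakly to $\mu$. The same argument applied pathwise on the almost sure event gives almost sure weak convergence of the ESD, by the standard moment-method criterion for determinate limits.

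The only point needing care is that the normalization $n^{-k}\Tr$ is exactly integration against the ESD of the $n^k\times n^k$ matrix $\widetilde M_{n,k,m}$, which is true by definition. Beyond that, all the work is contained in Proposition \ref{prop-example-small-k-moments}.
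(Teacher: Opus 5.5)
Your proposal is correct and follows the paper's route: the paper notes that the limiting moments coincide with those of Proposition~\ref{prop-example-unit-modulus}, and it reuses the bound $[4A\max\{1,c\}]^p p^p$ from Corollary~\ref{Coro-example-unit-modulus} (the Catalan count from Lemma~\ref{lem-Bai} together with $\prod_a m^{(\tau)}_{\deg_a(\alpha)}\le A^p p^p$) to verify Carleman's condition. Your tightness and uniform-integrability argument, applied to the expected ESD and pathwise on the almost sure event, spells out the standard moment-method step that the paper leaves implicit.
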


\begin{remark}
Corollary \ref{Coro-example-small-k-moments} gives convergence in expectation and almost surely in the setting of \cite{BYY2022}*{Proposition 2.4}.
We would like to point out that the convergence of the ESD of $\widetilde M_{n,k,m}$ was first obtained in \cite{Lytova2018}*{Theorem 1.2 and Remark 4.1} using the Stieltjes transform method.
\end{remark}

\subsection[The case k/n=O(1)]{The case $k/n=O(1)$}
\label{sec:example-linear-k}

Now we turn to the regime $k/n=O(1)$ in which $k$ can grow much faster. We have the following result.

\begin{proposition} \label{prop-example-linear-k-moments}
Under the assumptions of Theorem \ref{Thm-main}, if $k/n=O(1)$, then for every fixed $p \in \bN_+$,
\begin{align} \label{eq-example-linear-k-asymptotic}
    \frac1{n^k} \bE \left[ \Tr \widetilde M_{n,k,m}^p \right]
    &= \sum_{s=1}^p c^s \sum_{\alpha \in \cC_{s,p}^{(1)}} \left( \prod_{a=1}^s m_{\deg_a(\alpha)}^{(\tau)} \right) \exp \left\{ \frac{k}{n} (m_4 - 1) \sum_{a=1}^s \binom{\deg_a(\alpha)}2 \right\} + o(1).
\end{align}
If, in addition, $k/n \to d \in [0,\infty)$, then
\begin{align} \label{eq-example-linear-k-limit}
    \lim_{n \to \infty} \frac1{n^k} \bE \left[ \Tr \widetilde M_{n,k,m}^p \right]
    &= \sum_{s=1}^p c^s \sum_{\alpha \in \cC_{s,p}^{(1)}} \left( \prod_{a=1}^s m_{\deg_a(\alpha)}^{(\tau)} \right) \exp \left\{ d (m_4 - 1) \sum_{a=1}^s \binom{\deg_a(\alpha)}2 \right\}.
\end{align}
Both formulas also hold for $n^{-k} \Tr \widetilde M_{n,k,m}^p$ almost surely.
\end{proposition}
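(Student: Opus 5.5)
The plan is to start from Corollary \ref{cor-normalized-moment-as}. It reduces everything to the asymptotics of $(H_n(d)/n^d)^k$ for fixed $d\le p$ in the regime $k=O(n)$. Since the sum over $s$ and $\alpha\in\cC_{s,p}^{(1)}$ is finite and the $\tau$-coefficients are bounded, it suffices to show, for each fixed $\alpha$, that
\begin{align*}
    \left( \prod_{a=1}^s \frac{H_n(\deg_a(\alpha))}{n^{\deg_a(\alpha)}} \right)^k
    = \exp \left\{ \frac{k}{n} (m_4 - 1) \sum_{a=1}^s \binom{\deg_a(\alpha)}2 \right\} + o(1).
\end{align*}

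For this I reuse the expansion \eqref{eq-example-first-log-correction} from the proof of Proposition \ref{prop-example-small-k-moments}:
\begin{align*}
    \log \frac{H_n(d)}{n^d} = \frac{m_4-1}{n}\binom d2 + O(n^{-2})
\end{align*}
for each fixed $d$. Multiplying by $k$ and summing over $a$ gives
\begin{align*}
    k\sum_a \log \frac{H_n(\deg_a(\alpha))}{n^{\deg_a(\alpha)}} = \frac kn (m_4-1)\sum_a \binom{\deg_a(\alpha)}{2} + O(k/n^2).
\end{align*}
Since $k/n=O(1)$, the error is $O(1/n)=o(1)$. The main term is also bounded, by $O(p^2 \sup k/n)$. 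Hence the exponential of the left side equals the exponential of the main term times $(1+o(1))$, and the difference between the two is $o(1)$.

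Next I insert this into \eqref{eq-example-normalized-moment}. The bracketed sum is then the right side of \eqref{eq-example-linear-k-asymptotic} up to an additive $o(1)$, and it is bounded because each exponent is bounded. So the multiplicative factor $(1+o(1))$ in Corollary \ref{cor-normalized-moment-as} also only produces an additive $o(1)$. This proves \eqref{eq-example-linear-k-asymptotic}.

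If $k/n\to d$, continuity of the exponential in the finitely many terms yields \eqref{eq-example-linear-k-limit}. The almost sure versions follow in exactly the same way from \eqref{eq-example-normalized-moment-as}, simultaneously for all $p$.

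The only point needing care is converting the relative $(1+o(1))$ error into an additive $o(1)$ error. This relies on the uniform boundedness of the exponential factors, which holds precisely because $k/n=O(1)$. Everything else is routine.
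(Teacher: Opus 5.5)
Your proposal is correct and follows essentially the same route as the paper: combine Corollary~\ref{cor-normalized-moment-as} with the expansion \eqref{eq-example-first-log-correction}, note that the error $O(k/n^2)=O(n^{-1})$, exponentiate, and pass to the limit when $k/n\to d$. Your explicit remark that the exponents are bounded when $k/n=O(1)$, which is what turns the multiplicative $(1+o(1))$ into an additive $o(1)$, fills in a step the paper leaves implicit.
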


\begin{proof}
For fixed $p$, by \eqref{eq-example-first-log-correction}, we obtain
\begin{align*}
    k \sum_{a=1}^s \log \frac{H_n(\deg_a(\alpha))}{n^{\deg_a(\alpha)}}
    &= \frac{k}{n} (m_4 - 1) \sum_{a=1}^s \binom{\deg_a(\alpha)}2 + O(k/n^2),
\end{align*}
for all $\alpha \in \cC_{s,p}^{(1)}$.
Taking the exponential and applying Corollary \ref{cor-normalized-moment-as}, we obtain \eqref{eq-example-linear-k-asymptotic} and its almost sure version, noting that $O(k/n^2) = O(n^{-1})$. 
If $k/n \to d$, taking the limit gives \eqref{eq-example-linear-k-limit} and its almost sure version.
\end{proof}

\begin{remark}
For $d>0$, the limiting moment formula \eqref{eq-example-linear-k-limit} agrees with \cite{BYY2022}*{Theorem 2.1(i)} under the present assumptions.
\end{remark}

\begin{remark}
For the case $d=0$, Proposition \ref{prop-example-linear-k-moments} reduces to Proposition \ref{prop-example-small-k-moments}.
\end{remark}

\subsection[The case k/n tending to infinity]{The case $k/n\to\infty$}
\label{sec:example-quadratic-k}

In this subsection, we consider $m_4 > 1$ and $k/n \to \infty$, without an upper bound on the growth of $k$.
In particular, the conclusions below apply when $k$ is of order $n^a$ for any fixed real number $a > 1$.

\begin{proposition} \label{prop-example-moment-growth}
Under the assumptions of Theorem \ref{Thm-main}, suppose that $m_4 > 1$ and $k/n \to \infty$. For every fixed integer $p \ge 2$,
\begin{align}
    \frac1{n^k} \bE [\Tr \widetilde M_{n,k,m}^p]
    &= c m_p^{(\tau)} \left( \frac{H_n(p)}{n^p} \right)^k (1 + o(1)) \label{eq-example-general-moment-growth} \\*
    &= c m_p^{(\tau)} \exp \left\{ \left[ \binom p2 (m_4 - 1) + O(n^{-1}) \right] \frac{k}{n} \right\} (1 + o(1)). \label{eq-example-general-n-power}
\end{align}
In particular, if $k/n^a \to 1$ for a fixed real number $a > 1$, then
\begin{align} \label{eq-example-polynomial-k-growth}
    \frac1{n^k} \bE [\Tr \widetilde M_{n,k,m}^p]
    &= c m_p^{(\tau)} \exp \left\{ \left[ \binom p2 (m_4 - 1) + o(1) \right] n^{a-1} \right\}.
\end{align}
All three formulas also hold for $n^{-k} \Tr \widetilde M_{n,k,m}^p$ almost surely.
\end{proposition}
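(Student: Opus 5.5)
The plan is to start from Corollary \ref{cor-normalized-moment-as} and show that, among the finitely many non-crossing $\alpha$, the single term $s=1$, $\alpha=(1,\ldots,1)$ dominates all others by a factor tending to infinity. Write $h(d)=\log(H_n(d)/n^d)$. By \eqref{eq-example-first-log-correction}, for each fixed $d$ we have $h(d)=\binom d2 (m_4-1)/n+O(n^{-2})$. For $\alpha\in\cC_{s,p}^{(1)}$ with $s\ge2$, the degrees $d_a=\deg_a(\alpha)$ satisfy $\sum_a d_a=p$ and $d_a\ge1$, so
\begin{align*}
    \binom p2-\sum_{a=1}^s\binom{d_a}2=\sum_{a<b}d_ad_b\ge p-1\ge1 .
\end{align*}
Hence the ratio of the $\alpha$-term to the $(1,\ldots,1)$-term is
\begin{align*}
    \frac{c^{s-1}\prod_a m^{(\tau)}_{d_a}}{m^{(\tau)}_p}\exp\Bigl\{-k\Bigl[\frac{(m_4-1)}{n}\sum_{a<b}d_ad_b+O(n^{-2})\Bigr]\Bigr\}\le C\exp\Bigl\{-\frac{(m_4-1)k}{2n}\Bigr\}
\end{align*}
for large $n$. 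This tends to $0$ because $m_4>1$ and $k/n\to\infty$. Since there are only finitely many $\alpha$ (Lemma \ref{lem-Bai}), the whole bracket in \eqref{eq-example-normalized-moment} equals $c\,m_p^{(\tau)}(H_n(p)/n^p)^k(1+o(1))$, which gives \eqref{eq-example-general-moment-growth}. The almost sure version follows identically from \eqref{eq-example-normalized-moment-as}.

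For \eqref{eq-example-general-n-power}, substitute $k\,h(p)=\frac kn[\binom p2(m_4-1)+O(n^{-1})]$, again from \eqref{eq-example-first-log-correction}, and exponentiate. Here the $O(n^{-1})$ must be kept inside the exponent, since $k/n^2$ need not vanish. For \eqref{eq-example-polynomial-k-growth}, write $k=n^{a-1}\cdot n(1+o(1))$, so that $\frac kn=n^{a-1}(1+o(1))$ and $\frac kn\,O(n^{-1})=o(n^{a-1})$. The factor $(1+o(1))$ becomes $\exp\{o(1)\}$, and $o(1)=o(n^{a-1})$ because $a>1$. All of it is absorbed into $[\binom p2(m_4-1)+o(1)]n^{a-1}$.

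The main obstacle is making sure the $O(n^{-2})$ errors in $h(d)$ do not spoil the domination once they are multiplied by the possibly huge $k$. This is handled because the gap is of order $k/n$, while the error is $O(k/n^2)$, which is $o(k/n)$ uniformly over the finitely many degree patterns. The constant-factor ratios $c^{s-1}\prod_a m^{(\tau)}_{d_a}/m^{(\tau)}_p$ are fixed and finite, since $m_p^{(\tau)}>0$.
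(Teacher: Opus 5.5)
Your proposal is correct and follows the paper's proof essentially step for step. You isolate the $s=1$ term $\alpha=(1,\ldots,1)$ in Corollary~\ref{cor-normalized-moment-as}, then use \eqref{eq-example-first-log-correction} with $\sum_{a<b}\deg_a(\alpha)\deg_b(\alpha)\ge p-1$ to show every other term is smaller by a factor of order $\exp\{-(m_4-1)k/(2n)\}$, and finally exponentiate $k\log(H_n(p)/n^p)$ with the $O(n^{-1})$ kept inside the exponent. Your explicit absorption of the $(1+o(1))$ prefactor into the $o(1)\,n^{a-1}$ term for \eqref{eq-example-polynomial-k-growth} is a step the paper leaves implicit, and you handle it correctly.
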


\begin{proof}
In \eqref{eq-example-normalized-moment} and \eqref{eq-example-normalized-moment-as}, the unique term with $s=1$ corresponds to $\alpha = (1,\ldots,1)$, and is
\begin{align} \label{eq-dominated-1}
    c m_p^{(\tau)} \left( \frac{H_n(p)}{n^p} \right)^k.
\end{align}
For $s \ge 2$ and $\alpha \in \cC_{s,p}^{(1)}$, we have $\sum_{a=1}^s \deg_a(\alpha) = p$. By \eqref{eq-example-first-log-correction},
\begin{align*}
    \log \frac{\prod_{a=1}^s H_n(\deg_a(\alpha))}{H_n(p)}
    &= \log \left( \frac{\prod_{a=1}^s H_n(\deg_a(\alpha))}{n^p} \frac{n^p}{H_n(p)} \right) \\
    &= \frac{m_4 - 1}{n} \left[ \sum_{a=1}^s \binom{\deg_a(\alpha)}2 - \binom p2 \right] + O(n^{-2}) \\
    &= - \frac{m_4 - 1}{n} \sum_{1 \le a < b \le s} \deg_a(\alpha) \deg_b(\alpha) + O(n^{-2}).
\end{align*}
Since
\begin{align*}
    \sum_{1 \le a < b \le s} \deg_a(\alpha) \deg_b(\alpha)
    &\ge \deg_1(\alpha) (p - \deg_1(\alpha)) \ge p - 1,
\end{align*}
for all sufficiently large $n$,
\begin{align*}
    \left( \frac{\prod_{a=1}^s H_n(\deg_a(\alpha))}{H_n(p)} \right)^k
    &\le \exp \left\{ - (m_4 - 1)(p - 1) \frac{k}{n} + O(k/n^2) \right\} \longrightarrow 0.
\end{align*}
By Lemma \ref{lem-Bai}, the number of summands is finite and depends only on $p$. Thus, the term \eqref{eq-dominated-1} is dominant.
Corollary \ref{cor-normalized-moment-as} gives \eqref{eq-example-general-moment-growth} and its almost sure version.

Using \eqref{eq-example-first-log-correction} once more,
\begin{align*}
    k \log \frac{H_n(p)}{n^p}
    &= \binom p2 (m_4 - 1) \frac{k}{n} + O(k/n^2).
\end{align*}
Taking the exponential proves \eqref{eq-example-general-n-power} and its almost sure version. If $k/n^a \to 1$, then $k/n = (1 + o(1)) n^{a-1}$ and $O(k/n^2) = o(n^{a-1})$, which yields \eqref{eq-example-polynomial-k-growth} and its almost sure version.
\end{proof}

From Proposition \ref{prop-example-moment-growth}, if $k/n^a \to 1$ for some fixed $a > 1$, then for any $p \in \bN_+$ with $p \ge 2$, the $p$th moment grows faster than every fixed power of $n$, both in expectation and almost surely. In particular, the largest eigenvalue of $\widetilde M_{n,k,m}$ is unbounded.

Next, we turn to the estimate of the largest eigenvalue of $\widetilde M_{n,k,m}$ with the help of these moment estimates. We provide a lower bound in the following corollary.

\begin{corollary} \label{cor-example-quadratic-top}
Under the assumptions of Proposition \ref{prop-example-moment-growth},
\begin{align} \label{eq-example-quadratic-top-lower}
    \liminf_{n \to \infty} \frac{\lambda_{\max}(\widetilde M_{n,k,m})}{(1 + (m_4 - 1)/n)^k}
    &\ge \frac{m_2^{(\tau)}}{m_1^{(\tau)}} \qquad \text{almost surely},
\end{align}
where $\lambda_{\max}$ denotes the largest eigenvalue.
\end{corollary}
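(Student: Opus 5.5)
Since $\widetilde M_{n,k,m}$ is positive semidefinite, the plan is to compare two consecutive almost-sure moments from Corollary \ref{cor-normalized-moment-as}. Using $\Tr \widetilde M^{p+1} \le \lambda_{\max} \Tr \widetilde M^p$, we have
\begin{align*}
    \lambda_{\max}(\widetilde M_{n,k,m}) \ge \frac{n^{-k}\Tr \widetilde M_{n,k,m}^{2}}{n^{-k}\Tr \widetilde M_{n,k,m}}.
\end{align*}
We will apply this with $p=1$.

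For the denominator, we take $p=1$ in \eqref{eq-example-normalized-moment-as}. The only canonical sequence is $(1)$, and $H_n(1)=n$, so $n^{-k}\Tr \widetilde M_{n,k,m} \to c\, m_1^{(\tau)}$ almost surely; this agrees with \eqref{eq-example-first-moment}.

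For the numerator, we use Proposition \ref{prop-example-moment-growth} with $p=2$ in its almost sure form:
\begin{align*}
    n^{-k}\Tr \widetilde M_{n,k,m}^2 = c\, m_2^{(\tau)} \left( \frac{H_n(2)}{n^2}\right)^k (1+o(1)).
\end{align*}
This requires an exact evaluation of $H_n(2)$ from \eqref{eq-H-explicit}. The set $\cP(2)$ has two partitions: the one with two singleton blocks contributes $(n)_2 m_2^2 = n(n-1)$, and the one-block partition contributes $n\,m_4$. Hence
\begin{align*}
    H_n(2)=n^2+n(m_4-1), \qquad \frac{H_n(2)}{n^2} = 1+\frac{m_4-1}{n},
\end{align*}
which is exactly the normalizing factor in \eqref{eq-example-quadratic-top-lower}. (Alternatively, one can bypass Proposition \ref{prop-example-moment-growth} and work directly from \eqref{eq-example-normalized-moment-as} with $p=2$: its two terms are $c\,m_2^{(\tau)}(H_n(2)/n^2)^k$ and $c^2 (m_1^{(\tau)})^2$, and both are nonnegative, so the bound $n^{-k}\Tr\widetilde M^2 \ge c\,m_2^{(\tau)}(1+(m_4-1)/n)^k(1+o(1))$ holds in any regime.)

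Dividing the numerator asymptotics by the denominator limit, the constant $c$ cancels, and on the almost-sure event of Corollary \ref{cor-normalized-moment-as} we get
\begin{align*}
    \lambda_{\max}(\widetilde M_{n,k,m}) \ge \frac{m_2^{(\tau)}}{m_1^{(\tau)}}\Bigl(1+\frac{m_4-1}{n}\Bigr)^k (1+o(1)).
\end{align*}
Taking $\liminf$ then yields \eqref{eq-example-quadratic-top-lower}. There is no real obstacle here. The only care needed is that the $o(1)$ terms in Corollary \ref{cor-normalized-moment-as} are almost-sure and hold simultaneously for $p=1,2$, which the corollary guarantees. One also needs $m_1^{(\tau)}>0$ so that the denominator's limit is nonzero, and this is part of assumption \eqref{eq-tau-moment-convergence}.
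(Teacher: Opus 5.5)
Your proposal is correct and follows the paper's proof: the positive semidefinite bound $\lambda_{\max}(\widetilde M_{n,k,m})\ge \Tr\widetilde M_{n,k,m}^2/\Tr\widetilde M_{n,k,m}$, the almost sure first moment from Corollary \ref{cor-normalized-moment-as}, and Proposition \ref{prop-example-moment-growth} with $p=2$. Two small differences: you evaluate $H_n(2)/n^2=1+(m_4-1)/n$ from \eqref{eq-H-explicit} instead of the recursion \eqref{eq-H-degree-recursion}, and your parenthetical remark (the $s=2$ term is nonnegative) correctly shows the bound in fact holds without the hypotheses $m_4>1$ and $k/n\to\infty$.
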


\begin{proof}
Corollary \ref{cor-normalized-moment-as} with $p=1$ gives
\begin{align} \label{eq-example-quadratic-first-as}
    \frac1{n^k} \Tr \widetilde M_{n,k,m}
    &\longrightarrow c m_1^{(\tau)} > 0 \qquad \text{almost surely}.
\end{align}
By \eqref{eq-H-degree-recursion}, $H_n(2)/n^2 = 1 + (m_4 - 1)/n$. Proposition \ref{prop-example-moment-growth} with $p=2$ therefore yields
\begin{align} \label{eq-example-quadratic-second-liminf}
    \frac{n^{-k} \Tr \widetilde M_{n,k,m}^2}{(1 + (m_4 - 1)/n)^k}
    &\longrightarrow c m_2^{(\tau)} \qquad \text{almost surely}.
\end{align}
Since $\widetilde M_{n,k,m}$ is positive semi-definite, the eigenvalues of $\widetilde M_{n,k,m}$ is non-negative, and hence,
\begin{align*}
    \lambda_{\max}(\widetilde M_{n,k,m})
    &\ge \frac{\Tr \widetilde M_{n,k,m}^2}{\Tr \widetilde M_{n,k,m}},
\end{align*}
where the denominator is positive for all sufficiently large $n$ almost surely by \eqref{eq-example-quadratic-first-as}.
Combining this inequality with \eqref{eq-example-quadratic-first-as} and \eqref{eq-example-quadratic-second-liminf} proves \eqref{eq-example-quadratic-top-lower}.
\end{proof}

\begin{remark}
In particular, \eqref{eq-example-quadratic-top-lower} implies
\begin{align*}
    \liminf_{n \to \infty} \frac{n}{k} \log \lambda_{\max}(\widetilde M_{n,k,m})
    &\ge m_4 - 1 > 0
\end{align*}
almost surely.
\end{remark}

The Cauchy--Schwarz inequality applied to the eigenvalues gives, for every $p \ge 2$,
\begin{align*}
    \frac{\Tr \widetilde M_{n,k,m}^p}{\Tr \widetilde M_{n,k,m}^{p-1}}
    &\le \frac{\Tr \widetilde M_{n,k,m}^{p+1}}{\Tr \widetilde M_{n,k,m}^p},
\end{align*}
whenever the denominators are positive.
Thus, using higher order moments gives a better lower bound for the largest eigenvalue. More precisely, we have the following improvement.

\begin{corollary} \label{cor-example-higher-order-top}
Under the assumptions of Proposition \ref{prop-example-moment-growth},
\begin{align*}
    \frac{n}{k} \log \lambda_{\max}(\widetilde M_{n,k,m})
    &\longrightarrow +\infty \qquad \text{almost surely}.
\end{align*}
\end{corollary}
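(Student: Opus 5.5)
We bound $\lambda_{\max}(\widetilde M_{n,k,m})$ from below by a ratio of consecutive almost sure moments, exactly as in Corollary \ref{cor-example-quadratic-top}, but now with $p$ large. Since $\widetilde M_{n,k,m}$ is positive semi-definite, for every $p\ge2$ we have
\begin{align*}
    \lambda_{\max}(\widetilde M_{n,k,m}) \ge \frac{\Tr \widetilde M_{n,k,m}^{p}}{\Tr \widetilde M_{n,k,m}^{p-1}}
\end{align*}
whenever the denominator is positive. By Corollary \ref{cor-normalized-moment-as} with $p=1$, $\Tr \widetilde M_{n,k,m}>0$ for all large $n$ almost surely, so $\widetilde M_{n,k,m}\neq0$ and all traces of powers are positive.

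Fix an integer $p\ge3$. By Proposition \ref{prop-example-moment-growth}, in its almost sure form \eqref{eq-example-general-n-power}, applied to both $p$ and $p-1$, we obtain almost surely
\begin{align*}
    \frac{\Tr \widetilde M_{n,k,m}^{p}}{\Tr \widetilde M_{n,k,m}^{p-1}}
    = \frac{m_p^{(\tau)}}{m_{p-1}^{(\tau)}}\exp\left\{\left[\left(\binom p2-\binom{p-1}2\right)(m_4-1)+O(n^{-1})\right]\frac kn\right\}(1+o(1)).
\end{align*}
The factor $n^{-k}$ cancels in the ratio. The $O(n^{-1})$ terms for $p$ and $p-1$ combine into a single $O(n^{-1})$ term, since each $p$ is fixed.

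Since $\binom p2-\binom{p-1}2=p-1$, taking logarithms and multiplying by $n/k$ gives
\begin{align*}
    \frac nk\log\lambda_{\max}(\widetilde M_{n,k,m})
    \ge (p-1)(m_4-1)+O(n^{-1})+\frac nk\log\frac{m_p^{(\tau)}}{m_{p-1}^{(\tau)}}+\frac nk\,o(1).
\end{align*}
Because $k/n\to\infty$ and $m_p^{(\tau)},m_{p-1}^{(\tau)}\in(0,\infty)$ are fixed constants, the last two terms tend to zero. Hence, almost surely,
\begin{align*}
    \liminf_{n \to \infty}\frac nk\log\lambda_{\max}(\widetilde M_{n,k,m})\ge (p-1)(m_4-1).
\end{align*}

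Corollary \ref{cor-normalized-moment-as} holds almost surely simultaneously for all fixed $p$, and the almost sure statement of Proposition \ref{prop-example-moment-growth} is derived from it. So we may intersect countably many probability-one events, one for each $p\ge2$. On this intersection the bound holds for every $p$, and since $m_4>1$, letting $p\to\infty$ yields the divergence to $+\infty$.

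The only delicate point is the uniformity issue. The $o(1)$ and $O(n^{-1})$ error terms depend on $p$, so we cannot let $p$ grow with $n$. This is avoided by fixing $p$ first, taking the $\liminf$, and only then sending $p\to\infty$. The remaining steps are routine.
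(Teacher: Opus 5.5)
Your proposal is correct and follows essentially the same route as the paper: you use the trace-ratio bound $\lambda_{\max}(\widetilde M_{n,k,m})\ge \Tr\widetilde M_{n,k,m}^p/\Tr\widetilde M_{n,k,m}^{p-1}$ for a fixed $p\ge3$, apply the almost sure asymptotics of Proposition \ref{prop-example-moment-growth} to get $\liminf_{n\to\infty}\frac nk\log\lambda_{\max}(\widetilde M_{n,k,m})\ge(p-1)(m_4-1)$, and then let $p\to\infty$. The only cosmetic difference is that you plug in the already-expanded form \eqref{eq-example-general-n-power}, whereas the paper keeps the exact factor $\left(H_n(p)/(nH_n(p-1))\right)^k$ from \eqref{eq-example-general-moment-growth} and expands its logarithm afterwards.
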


\begin{proof}
Fix an integer $p\ge3$. The non-negativity of the eigenvalues of $\widetilde M_{n,k,m}$ yields
\begin{align} \label{eq-example-higher-order-trace-bound}
    \lambda_{\max}(\widetilde M_{n,k,m})
    &\ge \frac{\Tr \widetilde M_{n,k,m}^p}{\Tr \widetilde M_{n,k,m}^{p-1}},
\end{align}
whenever the denominator is positive.
By Proposition \ref{prop-example-moment-growth}, the denominator is eventually positive almost surely, and
\begin{align} \label{eq-converge-ratio-moment}
    \frac{\Tr \widetilde M_{n,k,m}^p}{\Tr \widetilde M_{n,k,m}^{p-1}} \left( \frac{n H_n(p-1)}{H_n(p)} \right)^k
    &\longrightarrow \frac{m_p^{(\tau)}}{m_{p-1}^{(\tau)}} \qquad \text{almost surely}.
\end{align}
Combining \eqref{eq-converge-ratio-moment} and \eqref{eq-example-higher-order-trace-bound}, we have
\begin{align} \label{eq-example-higher-order-top-lower}
    \liminf_{n \to \infty} \frac{\lambda_{\max}(\widetilde M_{n,k,m})}{\left( \frac{H_n(p)}{n H_n(p-1)} \right)^k}
    &\ge \frac{m_p^{(\tau)}}{m_{p-1}^{(\tau)}} \qquad \text{almost surely}.
\end{align}
By \eqref{eq-example-first-log-correction},
\begin{align} \label{eq-example-higher-order-log}
    \log \frac{H_n(p)}{n H_n(p-1)}
    &= \frac{(p-1)(m_4 - 1)}{n} + O(n^{-2}).
\end{align}
By \eqref{eq-example-higher-order-top-lower}, almost surely for all sufficiently large $n$,
\begin{align*}
    \frac{n}{k} \log \lambda_{\max}(\widetilde M_{n,k,m})
    &\ge n \log \frac{H_n(p)}{n H_n(p-1)} + \frac{n}{k} \log \frac{m_p^{(\tau)}}{2 m_{p-1}^{(\tau)}}.
\end{align*}
Using \eqref{eq-example-higher-order-log} and $n/k \to 0$, we obtain
\begin{align*}
    \liminf_{n \to \infty} \frac{n}{k} \log \lambda_{\max}(\widetilde M_{n,k,m})
    &\ge (p-1)(m_4 - 1) \qquad \text{almost surely}.
\end{align*}
As $m_4 > 1$, the proof is concluded by letting $p \to \infty$.
\end{proof}

\section*{Acknowledgments}

WY was supported by Guangdong Basic and Applied Basic Research Foundation (No. 2026A1515030040), and National Natural Science Foundation of China (No. 12501183), and a Grant of the Department of Science and Technology of Guangdong Province (No. 2024QN11X161).

\section*{Statements and Declarations}

All the figures are generated with AI assistance.

\bibliographystyle{plain}
\bibliography{tensor}

\end{document}